\documentclass[psamsfonts]{amsart}

\usepackage{amssymb,amsfonts}
\usepackage{url}
\usepackage[all,arc]{xy}
\usepackage{enumerate}
\usepackage{mathrsfs}
\usepackage{tikz-cd}

\newtheorem{thm}{Theorem}[section]  
\newtheorem{cor}[thm]{Corollary}
\newtheorem{prop}[thm]{Proposition}
\newtheorem{lem}[thm]{Lemma}

\theoremstyle{definition}
\newtheorem{defn}[thm]{Definition}

\newtheorem{exmp}[thm]{Example}

\theoremstyle{remark}
\newtheorem{rem}[thm]{Remark}

\makeatletter
\let\c@equation\c@thm
\makeatother
\numberwithin{equation}{section}

\bibliographystyle{plain}

\setcounter{tocdepth}{1}

\newcommand{\R}{\mathbb{R}}

\newcommand{\N}{\mathbb{N}}
\newcommand{\Z}{\mathbb{Z}}
\newcommand{\C}{\mathbb{C}}
\newcommand{\HH}{\mathbb{H}}

\renewcommand\Im{\operatorname{Im}}

\newcommand{\SO}{\text{SO}}

\newcommand{\SL}{\text{SL}}

\newcommand{\fsl}{\mathfrak{sl}}

\newcommand{\PSL}{\text{PSL}}
\newcommand{\tr}{\text{Tr}}
\newcommand{\conv}{\text{conv}}

\title{Metrics and compactifications of Teichm\"uller spaces of flat tori}

\author{Mark Greenfield and Lizhen Ji}

\date{}

\begin{document}

\begin{abstract}
Using the identification of the symmetric space $\SL(n,\R)/\SO(n)$ with the Teichm\"uller space of flat $n$-tori of unit volume, we explore several metrics and compactifications of these spaces, drawing inspiration both from Teichm\"uller theory and symmetric spaces. We define and study analogs of the Thurston, Teichm\"uller, and Weil-Petersson metrics. We show the Teichm\"uller metric is a symmetrization of the Thurston metric, which is a polyhedral Finsler metric, and the Weil-Petersson metric is the Riemannian metric of $\SL(n,\R)/\SO(n)$ as a symmetric space. We also construct a Thurston-type compactification using measured foliations on $n$-tori, and show that the horofunction compactification with respect to the Thurston metric is isomorphic to it, as well as to a minimal Satake compactification. 
\end{abstract}

\maketitle

\tableofcontents

\section{Introduction}
\label{intro}

Throughout their long histories, there has been a great deal of work studying analogies between Teichm\"uller spaces and symmetric spaces. Usually, questions and results about the latter motivate those about the former. In this paper, we reverse this pattern and use a modular interpretation of the symmetric space $\SL(n,\R)/\SO(n)$ to define and interpret new and old metrics and compactifications. 

While there are similarities between the action of mapping class groups on Teichm\"uller spaces and the action of arithmetic subgroups of Lie groups on associated symmetric spaces, Teichm\"uller spaces are very far from being symmetric spaces. For example, a corollary of Royden's theorem \cite{royden} shows that there are no symmetric points of Teichm\"uller spaces. Despite important departures from symmetric space behavior, in the case of flat $n$-tori of unit volume, the Teichm\"uller spaces are precisely symmetric spaces. 

The Teichm\"uller space of a closed oriented surface $S_g$ of genus $g$, denoted  $\mathcal{T}(S_g)$, is the moduli space of marked complex structures on the surface. By the uniformization theorem each such marked complex structure possesses a canonical Riemannian metric of constant curvature. Several different metrics have been defined for $\mathcal{T}(S_g)$, including the classical Teichm\"uller metric $d_{Teich}$, defined in terms of extremal quasiconformal distortion between two marked complex structures. Another well-known metric on $\mathcal{T}(S_g)$ is the Weil-Petersson metric, introduced by Weil \cite{weil}, which is an incomplete Riemannian metric. 

In \cite{thurston}, Thurston defined an asymmetric metric on $\mathcal{T}(S_g)$, $g\geq2$, using the extremal Lipschitz constant for marking-preserving maps between hyperbolic surfaces. This metric is natural for Teichm\"uller spaces of hyperbolic surfaces as it uses only the canonical Riemannian metric associated to each complex structure.

In this paper, after defining the Teichm\"uller spaces of unit volume flat $n$-tori, denoted by $\mathcal{T}(n)$, we will define analogs of these three metrics for $\mathcal{T}(n)$. The natural bijection $\mathcal{T}(n)\leftrightarrow\SL(n,\R)/\SO(n)$ (reviewed in Section \ref{teichmullertori}) is utilized throughout. 

\begin{thm}
\label{metricsummary}
For $\mathcal{T}(2)$, the Thurston metric, Teichm\"uller metric, Weil-Petersson metric, and hyperbolic metric all coincide. For $\mathcal{T}(n)$ with $n\geq3$, we have:
\begin{enumerate}
\item The Thurston metric is an asymmetric polyhedral Finsler metric which can be computed explicitly (Theorem \ref{lipschitzmetricfmla}, Proposition \ref{thurstonhoroboundary}).
\item The Teichm\"uller metric is the symmetrization of the Thurston metric by maximum (Theorem \ref{symofthurst}).
\item The Weil-Petersson metric is equal to the natural Riemannian metric on the symmetric space $\SL(n,\R)/\SO(n)$ (Proposition \ref{wpmetric}). 
\end{enumerate}
\end{thm}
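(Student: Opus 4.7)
The proof is a reduction to matrix computations in $\SL(n,\R)$. Every marking-preserving diffeomorphism between two flat unit-volume $n$-tori is isotopic to a unique affine map, so after fixing matrix representatives $M_1, M_2 \in \SL(n,\R)$ (taken modulo $\SO(n)$ on the right), the canonical comparison is governed by $A := M_2 M_1^{-1} \in \SL(n,\R)$, whose singular values $\sigma_1(A) \geq \cdots \geq \sigma_n(A) > 0$ encode all three metrics. Each bullet will then follow from the proposition or theorem to which it refers, so the plan below is to sketch those arguments.

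For item (1), the Lipschitz constant of the affine map induced by $A$ equals its operator norm, giving $d_{Thurston}(M_1, M_2) = \log \sigma_1(A)$ and an infinitesimal Finsler norm $X \mapsto \lambda_{\max}(X)$ on traceless symmetric matrices. The polyhedral character asserted in Theorem~\ref{lipschitzmetricfmla} should reflect the structure of this norm on each maximal flat: on the Cartan subalgebra of diagonal traceless matrices $\lambda_{\max}$ is the piecewise-linear function $\max_i X_{ii}$, whose unit ball is an unbounded polytope, and an explicit distance formula in general will follow by diagonalizing $A$. For item (2), the Teichm\"uller metric reduces to a function of the extreme singular values, namely $\max\{\log \sigma_1(A), -\log \sigma_n(A)\}$. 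Since $d_{Thurston}(M_2, M_1) = \log \sigma_1(A^{-1}) = -\log \sigma_n(A)$, this is exactly the maximum of $d_{Thurston}(M_1, M_2)$ and $d_{Thurston}(M_2, M_1)$, establishing the symmetrization claim.

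For item (3), the Weil--Petersson metric is defined by an $L^2$ pairing on infinitesimal marked deformations (harmonic Beltrami differentials and their higher-dimensional analogs). Running this computation in the affine model produces, up to a positive multiple, the trace form $\tr(XY)$ on traceless symmetric matrices, which is the standard $\SO(n)$-invariant Riemannian metric on $\SL(n,\R)/\SO(n)$. The $n=2$ coincidences are then immediate: with $\sigma_1 \sigma_2 = 1$ we get $\log \sigma_1(A) = -\log \sigma_2(A)$, which forces $d_{Thurston}$ to be symmetric and equal to $d_{Teich}$, while the identification $\SL(2,\R)/\SO(2) \cong \HH$ matches the Weil--Petersson metric with the hyperbolic metric up to a scalar that one absorbs into the definition.

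The principal obstacle is the polyhedrality assertion in item (1). The operator norm on symmetric matrices is not globally polyhedral in the literal sense (its unit ball is a spectrahedron), so the proof of Theorem~\ref{lipschitzmetricfmla} must either interpret ``polyhedral'' as polyhedrality of the Finsler norm restricted to each maximal flat, or produce a finite piecewise-linear description in coordinates adapted to the Iwasawa or $KAK$ decomposition. The horofunction calculation of Proposition~\ref{thurstonhoroboundary} should aid this step by pinning down the asymptotic directions of the unit ball.
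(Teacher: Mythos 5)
Your proposal is essentially the paper's approach: item (1) reduces to Proposition~\ref{extremalaffine} and Lemma~\ref{operatornorm} (Lipschitz constant of the affine comparison map equals its largest singular value, hence the eigenvalue formula in Theorem~\ref{lipschitzmetricfmla}); item (2) is exactly the computation in Theorem~\ref{symofthurst}, using the determinant-one constraint to convert the maximal dilatation $\max(L(g')^n, \ell(g')^{-n})$ into the max of the two Lipschitz constants; item (3) is the $L^2$-pairing argument of Proposition~\ref{wpmetric}, following Fischer--Tromba/Yamada; and the $n=2$ coincidences all follow from $\sigma_1\sigma_2=1$.

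Your ``principal obstacle'' is a genuine and well-spotted subtlety. As you say, the unit ball of $X\mapsto\lambda_{\max}(X)$ in the \emph{full} tangent space $\mathfrak{p}$ (traceless symmetric matrices) is a spectrahedron, not a polytope: for $n=2$ it is a Euclidean disk, and for $n\geq3$ its boundary is still curved. So the paper's Definition of ``polyhedral Finsler metric'' cannot be read literally. The intended --- and operationally used --- notion is the one you propose: polyhedrality of the unit ball \emph{restricted to a Cartan subalgebra} $\mathfrak{a}$. This is exactly the data that matters by Theorem~\ref{finslerballs} (Planche: $W$-invariant balls in $\mathfrak{a}$ $\leftrightarrow$ $G$-invariant Finsler metrics) and what Theorem~\ref{hswmainthm} (HSW) consumes, and Proposition~\ref{thurstonhoroboundary} verifies it: the unit ball in $\mathfrak{a}$ is $\{y:\sum_i y_i=0,\ y_i\leq c\}$. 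One small correction: that polytope is \emph{bounded}, not unbounded --- within the trace-zero hyperplane, the constraints $y_i\leq c$ together with $\sum_i y_i=0$ force $y_j\geq -(n-1)c$ for each $j$. With that fix, and reading ``polyhedral'' in the Cartan-subalgebra sense, your outline fills in correctly.
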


In addition, the Teichm\"uller metric on $\mathcal{T}(n)$ has been studied in a very different context before: in \cite{hilbertmetric}, the same metric on $\SL(n,\R)/\SO(n)$ was found to be a generalization of the Hilbert projective metric.

Our main tool for understanding the Thurston metric is Proposition \ref{extremalaffine}, where we show that the minimal Lipschitz constant is realized by the unique affine map between two marked tori. Recall that the extremal quasiconformal map realizing the Teichm\"uller distance is unique (see Theorem 11.9 of \cite{primer}, originally in \cite{teich}). Interestingly, this is not the case for extremal Lipschitz maps. We give a construction for an infinite family of extremal Lipschitz maps in Proposition \ref{nonuniqueaffine}.

Compactifications of symmetric spaces are well-studied from many perspectives. One of the most important constructions is the Satake compactification associated to a representation of the isometry group, first studied in \cite{satake}. Another is the horofunction compactification with respect to a (Finsler) metric, first defined by Gromov in \cite{gromov}. 

Compactifications of Teichm\"uller spaces have also been extensively studied. Thurston's compactification and its geometric interpretation using projective measured foliations (see \cite{fathi}) is the most well-known. In \cite{walsh}, Walsh showed that the horofunction compactification with respect to the Thurston metric is equivalent to Thurston's compactification. 

Haettel in \cite{haettel} has defined and studied a Thurston-type compactification of the space of marked lattices in $\R^n$ via an embedding in the projective space $\mathbb{P}(\R^{\Z^n}_+)$. This mimics the original construction of Thurston. Theorem 3.1 in \cite{haettel} shows that this compactification is $\SL(n,\R)$-equivariantly isomorphic to the minimal Satake compactification induced by the standard representation of $\SL(n,\R)$. 

In Section \ref{thurstoncompact}, we introduce a related compactification of $\mathcal{T}(n)$, analogous to the geometric description of Thurston's compactification. In particular, we define an analog of projective measured foliations on $n$-tori to construct a Thurston boundary of $\mathcal{T}(n)$. 

\begin{thm}
\label{horogeomcompact}
For the Teichm\"uller space $\mathcal{T}(n)=\SL(n,\R)/\SO(n)$ of unit volume flat $n$-tori, the following compactifications are $\SL(n,\R)$-equivariantly isomorphic:
\begin{enumerate}
\item Thurston compactification via measured foliations on $n$-tori
\item Horofunction compactification with respect to the Thurston metric
\item Minimal Satake compactification associated to the standard representation of $\SL(n,\R)$
\end{enumerate}
\end{thm}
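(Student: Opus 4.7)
The plan is to prove the three pairwise equivalences by combining Haettel's theorem with an explicit computation of horofunctions. Since each of the three spaces is a compact Hausdorff $\SL(n,\R)$-space containing $\mathcal{T}(n)$ as an open dense orbit, it suffices to produce continuous $\SL(n,\R)$-equivariant bijections compatible with this inclusion; these will automatically be homeomorphisms. I would handle (1) $\leftrightarrow$ (3) first, then use explicit horofunction calculations to link (2) with both.

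For (1) $\leftrightarrow$ (3), the key is to translate our measured-foliation description into Haettel's framework in \cite{haettel}. On an $n$-torus $\R^n/\Lambda$, a measured foliation amounts to a linear foliation equipped with a transverse positive measure, and the $\Z^n$-action by deck transformations determines a positive functional $\gamma \mapsto i(\mathcal{F},\gamma)$ measuring total transverse mass against each integer homology class. The induced map from projective measured foliations into $\mathbb{P}(\R_+^{\Z^n})$ identifies our Thurston compactification with the subspace of $\mathbb{P}(\R_+^{\Z^n})$ constructed by Haettel from marked lattices, because the minimal geometric intersection number of a closed curve of homology class $\gamma$ with $\mathcal{F}$ agrees up to normalization with the length functional used in \cite{haettel}. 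Theorem 3.1 of that paper then identifies the common image with the minimal Satake compactification.

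For (2) $\leftrightarrow$ (1), I would use the explicit formula for the Thurston metric from Theorem \ref{lipschitzmetricfmla} together with Proposition \ref{thurstonhoroboundary} to compute horofunctions along geodesic rays. Because the Thurston metric is a polyhedral Finsler metric, each horofunction $b(y) = \lim_{t \to \infty}(d_{\text{Thurston}}(x_t, y) - t)$ is determined by a face of the dual norm ball together with a point on that face, and the combinatorial structure of these faces should match the space of projective measured foliations on $n$-tori. The correspondence sends each horofunction to the class of ``directions of steepest decrease'', which in the polyhedral setting is captured precisely by a projective measured foliation. The model for this argument is Walsh's identification \cite{walsh} of the horofunction and Thurston compactifications in the hyperbolic case, adapted to the polyhedral flat setting.

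The main obstacle will be the horofunction computation in (2) $\leftrightarrow$ (1): controlling the limit $\lim_{t \to \infty}(d_{\text{Thurston}}(x_t, y) - t)$ along arbitrary geodesic rays in the polyhedral Finsler geometry, and showing it depends only on the projective class of the associated measured foliation. Because the dual norm is polyhedral and hence not strictly convex, distinct rays may produce the same Busemann function, and one must verify that the resulting equivalence relation matches projective equivalence of measured foliations on $n$-tori. Once this correspondence is established, continuity and $\SL(n,\R)$-equivariance follow from the naturality of all constructions, completing the chain of equivalences.
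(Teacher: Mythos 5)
Your proposal takes a genuinely different route from the paper, and both halves of your plan deviate in important ways; however, the second half has a substantive gap that you yourself flag but do not resolve.

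The paper does not prove (1)$\leftrightarrow$(2) directly at all. It proves (2)$\leftrightarrow$(3) and (1)$\leftrightarrow$(3) separately and concludes by composition. For (2)$\leftrightarrow$(3), rather than computing Busemann functions along rays and analyzing how faces of the polyhedral dual ball organize horofunctions, the paper invokes the theorem of Haettel--Schilling--Walsh--Wienhard (Theorem~\ref{hswmainthm}): the horofunction compactification of a $G$-invariant polyhedral Finsler metric whose unit ball in $\mathfrak{a}$ is the negative of the dual of $\conv(\mu_1,\ldots,\mu_k)$, where $\mu_i$ are the weights of a representation $\tau$, is $G$-isomorphic to $\overline{X}^S_\tau$. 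Given Proposition~\ref{finslermetricformula} for the Thurston Finsler unit ball, the proof of Proposition~\ref{thurstonhoroboundary} is reduced to an elementary dual-polytope computation, comparing two explicit polytopes in a Cartan subalgebra, with no horofunctions computed at all. The analysis you anticipate as ``the main obstacle'' (controlling $\lim_t (d_{Th}(x_t,y)-t)$ along rays, sorting out non-strict convexity of the polyhedral ball, matching equivalence classes of Busemann functions with projective measured foliations) is exactly what the HSW theorem is designed to avoid. As written, your plan has not discharged this step, and without an appeal to such a general result the computation is far from immediate.

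For (1)$\leftrightarrow$(3), you propose routing through Haettel's embedding into $\mathbb{P}(\R^{\Z^n}_+)$ via intersection numbers $\gamma\mapsto i(\mathcal{F},\gamma)$, and then invoking Theorem~3.1 of~\cite{haettel}. This is a plausible strategy (and the paper explicitly acknowledges the connection to Haettel's result), but the paper instead constructs a direct, self-contained bijection $\mathcal{PMFF}\leftrightarrow\partial\overline{\mathcal{T}(n)}^S_\pi$ using the interpretation of Satake boundary points as projective classes of positive-semidefinite quadratic forms (Lemma~\ref{measuredflat}): $\ker Q$ determines the leaves, and arc-length under $Q$ provides the transverse measure, with an explicit inverse construction via an eigenbasis. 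Compatibility of convergence and $\SL(n,\R)$-equivariance are then checked directly (Lemmas~\ref{convergenceequiv} and~\ref{equivariantthurston}). Your route would require establishing precisely how the geometric intersection number of a closed curve with a measured flat foliation reproduces (a limit of) the lattice-length functional used in~\cite{haettel}; you assert this ``agrees up to normalization'' but do not prove it, and some care is needed because the length functional lives on $\Z^n$ while the foliation measure is transverse mass of arcs, so the translation is not purely formal. The paper's quadratic-form description is more direct and also yields the geometric interpretation of Haettel's boundary points as a corollary rather than an input.
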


The equivalence (1)$\leftrightarrow$(2) is analogous to the case of hyperbolic surfaces, while (1)$\leftrightarrow$(3) is related to Theorem 3.1 in \cite{haettel}, and gives a geometric interpretation of the boundary points of the compactification in \cite{haettel}. Theorem \ref{horogeomcompact} is the combination of Proposition \ref{thurstonhoroboundary} and Theorem \ref{thurstonGisom}. We also show the following for the Teichm\"uller metric: 

\begin{thm}
\label{teichhorocompact}
The horofunction compactification of $\mathcal{T}(n)$ with the Teichm\"uller metric is $\SL(n,\R)$-equivariantly isomorphic to the generalized Satake compactification associated to the sum of the standard and dual representations of $\SL(n,\R)$.
\end{thm}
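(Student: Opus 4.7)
The plan is to combine the Thurston horofunction result (Theorem~\ref{horogeomcompact}) with the symmetrization identity $d_{Teich} = \max(d_{Thurst}, d_{Thurst}^\vee)$ from Theorem~\ref{symofthurst} (where $d_{Thurst}^\vee(x,y) := d_{Thurst}(y,x)$) and then identify the resulting ``join'' of horofunction boundaries with a generalized Satake compactification.

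As a first step, I would identify the horofunction compactification of $\mathcal{T}(n)$ with respect to $d_{Thurst}^\vee$. The Cartan involution $\sigma: g \mapsto (g^T)^{-1}$ of $\SL(n,\R)$ fixes $\SO(n)$ pointwise and so induces an involution of $\mathcal{T}(n)$. From the definition of the Thurston metric, a direct computation (using the identification of $\mathcal{T}(n)$ with positive definite symmetric matrices, where $\sigma$ acts by $X \mapsto X^{-1}$) shows that $\sigma$ intertwines $d_{Thurst}$ with $d_{Thurst}^\vee$. Since $\sigma$ sends the standard representation $V$ of $\SL(n,\R)$ to its dual $V^*$, applying Theorem~\ref{horogeomcompact} through $\sigma$ identifies the horofunction compactification of $\mathcal{T}(n)$ with respect to $d_{Thurst}^\vee$ with the minimal Satake compactification for $V^*$, equivariantly for the $\SL(n,\R)$-action.

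Second, I would analyze horofunctions of $d_{Teich}$ directly. For an unbounded sequence $(z_k)$ with basepoint $b$, write $\alpha_k = d_{Thurst}(b,z_k)$ and $\beta_k = d_{Thurst}^\vee(b,z_k)$; at least one of these tends to infinity. Passing to a subsequence, the normalized functions $f_k(x) = d_{Thurst}(x,z_k) - \alpha_k$ and $g_k(x) = d_{Thurst}^\vee(x,z_k) - \beta_k$ converge pointwise to horofunctions $h_1$ and $h_2$ of the two Thurston metrics, and the quantities $\alpha_k - \max(\alpha_k,\beta_k)$ and $\beta_k - \max(\alpha_k,\beta_k)$ converge to values $a,b \in [-\infty,0]$ with $\max(a,b)=0$. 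Expanding the outer max then gives
\[
d_{Teich}(x,z_k) - d_{Teich}(b,z_k) \longrightarrow \max\bigl(a + h_1(x),\; b + h_2(x)\bigr).
\]
Every $d_{Teich}$-horofunction has this form, so the Teichm\"uller horofunction boundary is the join of the two Thurston horofunction boundaries, with the natural identifications when $a$ or $b$ equals $-\infty$ (the corresponding factor $h_1$ or $h_2$ becoming irrelevant).

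Finally, I would identify this join with the generalized Satake compactification for $V \oplus V^*$. By the standard construction, the latter is the closure of the image of $\mathcal{T}(n)$ under the diagonal Satake embedding into $\mathbb{P}(\mathrm{Herm}(V) \oplus \mathrm{Herm}(V^*))$, and its boundary is precisely the topological join of the two individual minimal Satake boundaries, stratified by the relative scale of the $V$- and $V^*$-factors. Matching this stratification with the horofunction parametrization $(a,h_1,b,h_2)$ from the previous paragraph---together with the $\SL(n,\R)$-equivariance of both constructions---yields the desired isomorphism. The main obstacle lies in the middle step: since $\max$ does not commute with the subtraction used to form horofunctions, the normalization by $\max(\alpha_k,\beta_k)$ and the subsequential behavior of the ratio $\alpha_k/\beta_k$ must be tracked carefully, particularly in the borderline case when the two Thurston scales are comparable. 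Verifying that \emph{every} Teichm\"uller horofunction arises in this form, and that the topology on the parameter space agrees with the product-Satake topology at the boundary, will be the most technical part of the argument.
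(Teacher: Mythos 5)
Your strategy is genuinely different from the paper's. The paper's proof (Proposition~\ref{teichmullerboundary}) bypasses any analysis of horofunctions of the max-symmetrization: it applies Theorem~\ref{hswmainthm} directly by computing the weights of $\Pi\oplus\Pi^*$, forming their convex hull $D$ in $\mathfrak{a}$, computing $-D^{\circ}$ (Lemma~\ref{dualball}), and observing that this polytope coincides (up to scale) with the unit ball of $d_{Teich}$ given by Equation~\ref{finslerequation}. So the entire content of the paper's proof is a weight-polytope computation plus an appeal to \cite{hsw}; there is no decomposition of Teichm\"uller horofunctions into Thurston ones.

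Your first step (Cartan involution $X\mapsto X^{-1}$ intertwines $d_{Th}$ with $d_{Th}^\vee$ and $\Pi$ with $\Pi^*$) is correct and is a clean observation the paper does not make explicit. Your second step is also essentially sound: writing $d_{Teich}(x,z_k)-d_{Teich}(b,z_k)=\max\bigl(a_k+f_k(x),\,b_k+g_k(x)\bigr)$ with $a_k,b_k\le 0$ and $\max(a_k,b_k)=0$, passing to subsequences, and using the Lipschitz bound on $f_k,g_k$ to control the case $a_k\to-\infty$ or $b_k\to-\infty$ does give the representation $\max(a+h_1,\,b+h_2)$ for every $d_{Teich}$-horofunction. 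The gap is the third step. The assertion that the boundary of the $\Pi\oplus\Pi^*$ Satake compactification is ``precisely the topological join of the two minimal Satake boundaries'' is not justified and, taken literally, cannot be right: a join of two spheres has the wrong dimension. What you actually have in $\mathbb{P}(\mathcal{H}_n\oplus\mathcal{H}_n)$ is the projective closure of the graph $gK\mapsto[gg^T\oplus(gg^T)^{-1}]$, and the two blocks are far from independent. Identifying the resulting boundary stratification, and matching the topology with your parameter space $(a,h_1,b,h_2)$, would require reproving a substantial piece of what Theorem~\ref{hswmainthm} already provides. As written, this is the step that does not close, and it is exactly where the paper instead invokes the \cite{hsw} machinery. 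If you want to push your route through, you would need an explicit description of convergence in the $\Pi\oplus\Pi^*$ Satake compactification along unbounded sequences $(z_k)$, and then a matching lemma; the polytope/weight computation in the paper's Lemma~\ref{dualball} would likely resurface in that matching.
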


Finally, as an immediate corollary to Theorem \ref{metricsummary}(3) and well-known facts about compactifications of nonpositively curved Riemannian symmetric spaces, we have:
\begin{cor}
The horofunction compactification of $\mathcal{T}(n)$ with the Weil-Petersson metric is the visual compactification.
\end{cor}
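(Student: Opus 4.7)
The plan is to chain together three observations, none of which require new work once Theorem \ref{metricsummary}(3) is in hand. First, I would invoke Theorem \ref{metricsummary}(3) to replace the Weil-Petersson metric on $\mathcal{T}(n)$ by the canonical Riemannian metric on $\SL(n,\R)/\SO(n)$; from this point forward, the problem is purely a statement about a symmetric space of noncompact type, and no Teichm\"uller-theoretic input is needed.

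Next, I would note that $\SL(n,\R)/\SO(n)$, equipped with its symmetric Riemannian metric, is a Hadamard manifold: it is complete, simply connected, and has nonpositive sectional curvature (this is the defining property of a symmetric space of noncompact type). In particular, the Cartan-Hadamard theorem applies and the exponential map at any basepoint is a diffeomorphism.

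The final step is to appeal to the classical fact that for a Hadamard manifold $X$ with its Riemannian distance, the horofunction compactification and the visual (geodesic-ray) compactification coincide. Concretely, every unit-speed geodesic ray $\gamma$ gives a Busemann function $b_\gamma(x)=\lim_{t\to\infty}\bigl(d(x,\gamma(t))-t\bigr)$, two rays yield the same Busemann function (up to additive constants) if and only if they are asymptotic, and every horofunction arises this way. References for this equivalence include Eberlein-O'Neill's original construction of the sphere at infinity and Ballmann-Gromov-Schroeder's treatment of nonpositively curved manifolds.

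There is no real obstacle here; the corollary is genuinely immediate once Theorem \ref{metricsummary}(3) has identified the Weil-Petersson metric with the symmetric Riemannian metric, because the equivalence of horofunction and visual compactifications on Hadamard manifolds is standard. The only thing worth being careful about in the write-up is stating clearly which compactification is meant by ``visual''—namely the one whose boundary is the set of asymptote classes of unit-speed geodesic rays, with the cone topology—so that the comparison with the horofunction boundary is unambiguous.
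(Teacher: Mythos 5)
Your argument is exactly the paper's: identify the Weil--Petersson metric with the symmetric Riemannian metric on $\SL(n,\R)/\SO(n)$ via Proposition \ref{wpmetric}, then cite the standard equivalence of horofunction and visual compactifications for nonpositively curved complete simply connected spaces (the paper quotes this as Theorem 8.13 of \S II.8 in \cite{bridson} for CAT(0) spaces; you phrase it for Hadamard manifolds, which is the same thing here). Correct and essentially identical.
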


This work began by considering the Thurston metric on Teichm\"uller spaces of 2-tori, following \cite{bpt}. By defining a new analog of Thurston's metric and extending to higher dimensions, this work (especially Theorem \ref{lipschitzmetricfmla}) gives an answer to Problem 5.3 in W. Su's list of problems on the Thurston metric \cite{problems} from the AIM workshop ``Lipschitz metric on Teichm\"uller space" in 2012. 
\newline

\noindent{\bf Acknowledgements:} The authors wish to thank Richard Canary for several helpful discussions and Athanase Papadopoulos for suggesting some important references. The first author is supported by the National Science Foundation Graduate Research Fellowship Program under Grant No. DGE\#1256260.

\section{Teichm\"uller spaces of hyperbolic surfaces}
\label{teichbackground}

Here, we will review some background on Teichm\"uller spaces for hyperbolic surfaces. Let $S_g$ be a closed, oriented smooth surface of genus $g\geq2$. 

\begin{defn}
\label{classical}
The Teichm\"uller space $\mathcal{T}(S_g)$ is defined as the set of equivalence classes of marked closed Riemann surfaces of genus $g$:
$$
\mathcal{T}(S_g) = \{[S,f]: S \text{ a Riemann surface}, f:S \to S_g \text{ orientation-perserving homeomorphism}\}/\sim
$$
where $[S,f]\sim[S',f']$ if and only if there exists a biholomorphism $h$ such that the following diagram commutes up to homotopy:

\begin{figure}[!ht]
\centering
\begin{tikzcd}[row sep=tiny]
S  \arrow[dd, "h"]\arrow[rd, "f"] \\
 & S_g   \\
S' \arrow[ur, "f'"]
\end{tikzcd}
\end{figure}
\end{defn}

\begin{rem} 
By forgetting the maps $f$ and $f'$, we forget the markings and the condition reduces to conformal equivalence. The resulting collection defines the moduli space $\mathcal{M}_g$ of complex structures on $S_g$. More formally, the moduli space is realized as the quotient $\mathcal{M}_g=\text{Mod}_g\backslash\mathcal{T}(S_g),$ where $\text{Mod}_g = \text{Diff}^+(S_g)/\text{Diff}_0(S_g)$ is the mapping class group of $S_g$, and the action is given by $$\varphi\cdot[S,f] = [S,\varphi\circ f]$$
\end{rem}

Recall next the correspondence between complex structures and constant-curvature metrics via the uniformization theorem. 

\begin{prop}
\label{canonicalbij}
For each $g\geq2$, there is a canonical bijection
$$
\mathcal{T}(S_g)\cong \text{Met}_g/\text{Diff}_0(S_g)
$$
where $\text{Met}^{-1}_g$ is the collection of hyperbolic metrics on $S_g$, and $\text{Diff}_0(S_g)$ is the collection of diffeomorphisms of $S_g$ isotopic to the identity. 
\end{prop}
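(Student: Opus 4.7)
The plan is to build a map $\Phi : \mathcal{T}(S_g)\to\text{Met}_g/\text{Diff}_0(S_g)$ and a candidate inverse $\Psi$, then verify that each is well-defined and that the two composites are the identity. The bridge between complex structures and hyperbolic metrics is the uniformization theorem, which makes one direction canonical; the real content is checking that the markings and the two equivalence relations line up properly.

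For the forward map, given a representative $(S,f)$, use uniformization to pick out the unique complete hyperbolic metric $\rho_S$ in the conformal class of the complex structure on $S$. After replacing $f$ by a homotopic diffeomorphism (in dimension two every homeomorphism of a smooth surface is isotopic to a diffeomorphism), define $\Phi([S,f]) := [(f^{-1})^*\rho_S]$. For well-definedness, suppose $(S,f)\sim(S',f')$ via a biholomorphism $h:S\to S'$ with $f'\circ h\simeq f$. Uniqueness in uniformization forces $h^*\rho_{S'}=\rho_S$. Setting $\varphi = f\circ h^{-1}\circ (f')^{-1}$, a direct chain-rule computation yields $\varphi^*\bigl((f^{-1})^*\rho_S\bigr) = ((f')^{-1})^*\rho_{S'}$, while the homotopy relation gives $\varphi\simeq \text{id}_{S_g}$. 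This places the two metrics in the same $\text{Diff}_0(S_g)$-orbit, provided one can upgrade the homotopy to an isotopy (see the obstacle below).

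For the inverse, define $\Psi([m]) := [(S_g,X_m),\text{id}_{S_g}]$, where $X_m$ is the canonical complex structure determined by the conformal class of $m$. Well-definedness is transparent: if $m' = \varphi^*m$ with $\varphi\in\text{Diff}_0(S_g)$, then $\varphi$ is a biholomorphism $(S_g,X_{m'})\to(S_g,X_m)$ fitting into the triangle of Definition \ref{classical} up to homotopy. The identity $\Phi\circ\Psi = \text{id}$ is immediate, since uniformization recovers $m$ from $X_m$. For $\Psi\circ\Phi = \text{id}$, the map $f$ itself realizes the required biholomorphism from the given $(S,X_S)$ to $(S_g,X_{(f^{-1})^*\rho_S})$, and the defining triangle commutes strictly.

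The main obstacle I anticipate is reconciling the two equivalence relations: Definition \ref{classical} requires only homotopy commutativity of the triangle, whereas $\text{Diff}_0(S_g)$ is defined via isotopy. Bridging this gap relies on the classical theorem of Baer and Epstein that two homotopic orientation-preserving diffeomorphisms of a closed orientable surface of genus $\geq 1$ are in fact isotopic. Once this is invoked, together with the standard smoothing of homeomorphisms in dimension two, all of the compatibility verifications above become formal.
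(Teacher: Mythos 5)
The paper itself gives no proof of this proposition, deferring entirely to Theorem 1.8 of Imayoshi--Taniguchi; your argument is a correct and essentially complete write-up of the standard bijection one would find there. You have correctly located the two nontrivial inputs: the uniqueness half of the uniformization theorem (which makes the assignment of a hyperbolic metric to a complex structure canonical and gives $h^*\rho_{S'}=\rho_S$), and the Baer--Epstein theorem (homotopic diffeomorphisms of a closed orientable surface of genus $\geq 1$ are isotopic), which is precisely what reconciles the homotopy-commutativity in Definition \ref{classical} with the isotopy built into $\text{Diff}_0(S_g)$. The computation $\varphi^*((f^{-1})^*\rho_S)=((f')^{-1})^*\rho_{S'}$ and the verification $\varphi\simeq\text{id}_{S_g}$ are both right. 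One small point you glossed over: in defining $\Phi$ you first replace $f$ by a homotopic diffeomorphism, so you should also check that two such smooth replacements $f_1,f_2$ yield the same class; this follows by the same mechanism, since $f_1\circ f_2^{-1}\simeq\text{id}_{S_g}$ and hence lies in $\text{Diff}_0(S_g)$ by Baer--Epstein. With that remark added, the proof is complete.
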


This is a special case of Theorem 1.8 in \cite{imayoshi}. We can thus also view Teichm\"uller space as equivalence classes of marked hyperbolic surfaces.

We will next define the Teichm\"uller metric. Let $[S,f],[S',f']\in\mathcal{T}(S_g)$. Then the map $f'^{-1}\circ f$ is an orientation-preserving homeomorphism from $S$ to $S'$. Recall that the quasiconformal dilatation of an orientation-preserving almost-everywhere real differentiable map $\phi:D_1\to D_2$ between domains in $\C$ is given by:
\begin{equation}
\label{qcdef}
K_{\phi} = \sup\frac{|\phi_z|+|\phi_{\bar{z}}|}{|\phi_z|-|\phi_{\bar{z}}|},
\end{equation}
where the supremum is over all points where $\psi$ is real-differentiable. This definition extends to maps between Riemann surfaces. Then the \textit{Teichm\"uller metric} on $\mathcal{T}(S_g)$ is defined as:
\begin{equation}
\label{dteichdef}
d_{Teich}([S,f],[S',f']) = \frac{1}{2}\log \inf_{\phi\in[f'^{-1}\circ f]}(K_{\phi})
\end{equation}
where the infimum is taken over all homeomorphisms $\phi$ in the homotopy class $[f'^{-1}\circ f]$ which are smooth except at finitely many points. One can show this defines a metric on $\mathcal{T}(S_g)$ (see \S5.1 of \cite{imayoshi}). 

For $g=1$, the Teichm\"uller metric was determined by Teichm\"uller in \cite{teich} (see also the translation and commentary in \cite{papadopouloscommentary}):
\begin{prop}
\label{h2t1equiv}
Under the identification $\HH^2\to\mathcal{T}(S_1)$ defined by $\tau\mapsto \C/(\Z+\tau\Z)$, the Teichm\"uller metric is equal to the hyperbolic metric.
\end{prop}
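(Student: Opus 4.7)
The plan is to exhibit the extremal quasiconformal map between two marked tori as an affine map, compute its Beltrami coefficient and dilatation, identify the result with the hyperbolic distance, and verify extremality via a length--area argument.

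For $\tau, \tau' \in \HH^2$, consider the unique $\R$-linear map $A \colon \C \to \C$ with $A(1) = 1$ and $A(\tau) = \tau'$. Writing $A(z) = \alpha z + \beta \bar z$ and solving the resulting $2 \times 2$ system gives $\alpha = (\tau' - \bar\tau)/(\tau - \bar\tau)$ and $\beta = (\tau - \tau')/(\tau - \bar\tau)$. Since $A$ maps $\Z + \tau\Z$ bijectively onto $\Z + \tau'\Z$, it descends to a diffeomorphism $\bar A$ between the two tori that preserves the markings, and hence lies in the homotopy class appearing in the definition of $d_{Teich}$. Being affine, $\bar A$ has constant Beltrami coefficient $\mu_A = \beta/\alpha = (\tau-\tau')/(\tau'-\bar\tau)$, and so constant pointwise dilatation $K_A = (1+|\mu_A|)/(1-|\mu_A|)$. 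Comparing with the classical hyperbolic identity
\[
\tanh\!\Big(\tfrac{1}{2}\, d_{\HH^2}(\tau,\tau')\Big) = \left|\frac{\tau-\tau'}{\tau-\bar\tau'}\right|,
\]
and noting $|\tau'-\bar\tau| = |\tau-\bar\tau'|$, one obtains $|\mu_A| = \tanh(d_{\HH^2}(\tau,\tau')/2)$; routine algebra then converts this to $\tfrac{1}{2}\log K_A = d_{\HH^2}(\tau,\tau')$ under the curvature normalization that makes $\HH^2$ isometric to the symmetric space $\SL(2,\R)/\SO(2) = \mathcal{T}(S_1)$.

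The main obstacle is to verify extremality, that is, $\operatorname{ess\,sup} K_\phi \geq K_A$ for every quasiconformal $\phi$ homotopic to $\bar A$. This is the case $g = 1$ of Teichm\"uller's theorem \cite{teich}, and for flat tori the cleanest argument is the length--area method. After rescaling both tori to unit area (which affects neither $K_\phi$ nor $K_A$), for each primitive lattice vector $v = m + n\tau$ the source torus is foliated by closed geodesics of length $\ell_v$ in the direction of $v$, and $\phi$ must send each such leaf to a curve of length at least $\ell'_v$, the length of the corresponding closed geodesic in the target. Integrating the pointwise inequality $|D\phi(\dot\gamma)| \leq |\phi_z| + |\phi_{\bar z}|$ along the leaves and then transversally, and applying Cauchy--Schwarz together with the Jacobian identity $(|\phi_z|+|\phi_{\bar z}|)^2 = K_\phi \cdot J_\phi$, yields $(\ell'_v/\ell_v)^2 \leq \operatorname{ess\,sup} K_\phi$. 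The supremum of the left-hand side as $v$ ranges over primitive lattice vectors equals the squared larger singular value of the normalized affine map $\bar A$, which is precisely $K_A$, completing the proof.
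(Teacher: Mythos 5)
The paper does not prove Proposition~\ref{h2t1equiv}; it simply cites Teichm\"uller's original paper \cite{teich} (and the commentary \cite{papadopouloscommentary}). So there is no in-paper proof to compare against, and your proposal has to be judged on its own. It is essentially correct. The upper bound via the affine map and the Beltrami/dilatation computation is right: with $A(z)=\alpha z+\beta\bar z$, $A(1)=1$, $A(\tau)=\tau'$, one indeed gets $\mu_A=(\tau-\tau')/(\tau'-\bar\tau)$, and since $|\tau'-\bar\tau|=|\tau-\bar\tau'|$, the identity $|\mu_A|=\tanh\bigl(\tfrac12 d_{\HH^2}(\tau,\tau')\bigr)$ follows. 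The extremality step is the classical Gr\"otzsch length--area argument, and your sketch of it is sound: after normalizing both tori to unit area, for each primitive lattice class $v$ one integrates $|D\phi(\dot\gamma)|\le(|\phi_z|+|\phi_{\bar z}|)$ over the geodesic foliation in direction $v$, applies Cauchy--Schwarz with $(|\phi_z|+|\phi_{\bar z}|)^2=K_\phi\,J_\phi$, and gets $(\ell'_v/\ell_v)^2\le\operatorname{ess\,sup}K_\phi$; taking the supremum over primitive $v$ (whose directions are dense in $S^1$) gives the squared top singular value of the area-normalized affine map, which is exactly $K_A$.

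Two remarks. First, a small normalization slip: with the standard curvature $-1$ Poincar\'e metric, $\tfrac{1}{1-\tanh x}\cdot(1+\tanh x)=e^{2x}$ gives $K_A=e^{d_{\HH^2}(\tau,\tau')}$ and hence $\tfrac12\log K_A=\tfrac12 d_{\HH^2}(\tau,\tau')$, not $d_{\HH^2}(\tau,\tau')$ as you wrote. So the Teichm\"uller metric is \emph{half} the curvature $-1$ Poincar\'e metric (equivalently, equals the Poincar\'e metric of curvature $-4$). Your parenthetical appeal to ``the curvature normalization that makes $\HH^2$ isometric to $\SL(2,\R)/\SO(2)$'' does not pin this down either (that symmetric-space metric, as the paper normalizes it in Corollary~\ref{dim2thurston}, is $\sqrt2$ times the Poincar\'e metric). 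The paper is itself loose about this — Corollary~\ref{dim2thurston} only claims agreement ``up to scaling'' — so this is forgivable, but you should state the constant explicitly rather than wave at a normalization. Second, it is worth noting that your extremality argument is genuinely different from the one the paper uses for the analogous higher-dimensional statements: in Propositions~\ref{extremalaffine} and~\ref{kqcextremal} the paper proves extremality of the affine map by the averaging trick $g_k(x)=g(kx)/k$, uniform convergence of $g_k$ to the affine map, and closedness of the class of $K$-Lipschitz (resp.\ $K$-quasiconformal) maps under locally uniform limits. That argument generalizes cleanly to all dimensions but gives no quantitative lower bound; your length--area argument is special to dimension $2$ but has the advantage of producing the explicit extremal length ratio $\sup_v(\ell'_v/\ell_v)^2=K_A$, which also underlies the equality $\kappa=d_{Th}$ (Proposition~\ref{kappalambdasame}).
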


Thurston's (asymmetric) metric \cite{thurston} utilizes the hyperbolic structure on surfaces. If $[S,f],[S',f']\in\mathcal{T}(S_g)$, then the Thurston distance between them is defined:
$$
d_{Th}([S,f],[S',f']) = \frac{1}{2}\inf_{\phi\in[f'^{-1}\circ f]}\log(\mathcal{L}(\phi))
$$
where the infimum is over all Lipschitz maps $\phi:S\to S'$ in $[f'^{-1}\circ f]$, and 
$$
\mathcal{L}(\phi) = \sup_{x\neq y}\frac{d_{S'}(\phi(x),\phi(y))}{d_S(x,y)}
$$
is the Lipschitz constant for $\phi$, and $d_{S'}$, $d_S$ are the induced hyperbolic metrics. 

Lastly, we recall the Weil-Petersson metric \cite{weil}. See also Chapter 7 of \cite{imayoshi} or \cite{hubbard} \S7.7. Let $[S,f]\in\mathcal{T}_g$, and let $Q(S)$ be the vector space of holomorphic quadratic differentials on $S$, identified with the cotangent space of $\mathcal{T}(S_g)$. For $q_1,q_2\in Q(S)$ define an inner product on $Q(S)$ by
$$
\langle q_1,q_2\rangle_{WP} = \int_S\bar{q_1}q_2(ds^2)^{-1},
$$
where $ds^2$ is the hyperbolic metric on the Riemann surface. This induces an inner product on the tangent space $T_{[S,f]}\mathcal{T}(S_g)$, known as the Weil-Petersson metric.

\section{The Teichm\"uller spaces of flat $n$-tori}
\label{teichmullertori}

We will introduce now the Teichm\"uller spaces of unit volume flat $n$-tori, denoted $\mathcal{T}(n)$, where $n\geq2$. Let $\mathbb{T}^n = \R^n/\Z^n$ be the square torus of dimension $n$. 

\begin{defn}
The Teichm\"uller space $\mathcal{T}(n)$ is defined as the set of equivalence classes of marked flat tori of dimension $n$ and unit volume:
$$
\mathcal{T}(n) = \{[S,f]: S\text{ a flat $n$-torus of volume 1, } f:S\to \mathbb{T}^n\text{ orientation-preserving homeo}\}/\sim
$$
where $[S,f]\sim[S',f']$ if and only if there exists an isometry $h:S\to S'$ such that the following diagram commutes up to homotopy:
\begin{figure}[!ht]
\centering
\begin{tikzcd}[row sep=tiny]
S  \arrow[dd, "h"]\arrow[rd, "f"] \\
 & \mathbb{T}^n   \\
S' \arrow[ur, "f'"]
\end{tikzcd}
\end{figure}
\end{defn}

We now recall a few classical facts.

\begin{prop}
There is a natural bijective correspondence between the following spaces:
$$
\mathcal{T}(n)\leftrightarrow\SL(n,\R)/\SO(n).
$$
\end{prop}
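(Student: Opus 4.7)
The plan is to exhibit the bijection concretely via the Gram matrix of the flat metric pushed forward along the marking. Given $[S,f]\in\mathcal{T}(n)$, I will associate a positive-definite symmetric matrix $G$ with $\det G = 1$, and then invoke the standard diffeomorphism $\SL(n,\R)/\SO(n)\cong\{G\in\mathrm{Sym}^+_n:\det G=1\}$ given by $g\SO(n)\mapsto gg^T$.

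First, I would justify that the marking $f:S\to\mathbb{T}^n$ has a canonical representative within its homotopy class. Since $\mathbb{T}^n$ is a $K(\Z^n,1)$-space, every homotopy class of maps $S\to\mathbb{T}^n$ is determined by the induced homomorphism on fundamental groups, and for tori this homomorphism is always realized by an affine map that is unique up to translation. Writing $S=\R^n/\Lambda$ for a lattice $\Lambda\subset\R^n$ of covolume $1$, the affine lift of $f$ gives a linear map $\tilde f:\R^n\to\R^n$ sending $\Lambda$ to $\Z^n$; if $A\in\SL(n,\R)$ is the matrix of $\tilde f^{-1}$, then its columns form the basis of $\Lambda$ distinguished by the marking. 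Define $G=A^TA$; equivalently, $G$ is the Gram matrix of the flat metric on $\mathbb{T}^n$ obtained by pushing the metric of $S$ forward along $\tilde f$.

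Next I would verify that $G$ depends only on the equivalence class $[S,f]$ and that $[S,f]\mapsto G$ is a bijection onto $\{G:\det G=1\}$. The only arbitrary choice in the construction is the isometric identification of the universal cover $\tilde S$ with $\R^n$; changing this choice replaces $A$ by $RA$ for some $R\in\SO(n)$, which does not change $A^TA$. If $(S,f)\sim(S',f')$ via an isometry $h$, lifting $h$ to a Euclidean isometry of $\R^n$ and using $f'\circ h\simeq f$ shows both constructions produce the same $G$. Conversely, given $G$, choose any $A\in\SL(n,\R)$ with $A^TA=G$ (for instance via Cholesky), set $S=\R^n/A\Z^n$ with the induced flat metric, and let $f$ be induced by $A^{-1}$; this recovers the original data, giving injectivity and surjectivity simultaneously. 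Composing with the Cartan-type identification yields the stated bijection with $\SL(n,\R)/\SO(n)$.

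The main obstacle is verifying carefully that the homotopy-commutativity of the defining diagram for equivalence matches precisely the $\SO(n)$-ambiguity in the basis matrix $A$. This hinges on two classical facts: every orientation-preserving isometry between flat Euclidean tori lifts to a Euclidean isometry of the form $x\mapsto Rx+v$ with $R\in\SO(n)$, and translations by $v$ are isotopic to the identity on the torus, hence irrelevant at the level of homotopy classes of markings. Once these are in place, the correspondence between equivalence classes in $\mathcal{T}(n)$ and cosets in $\SL(n,\R)/\SO(n)$ follows cleanly.
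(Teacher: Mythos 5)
Your proof is correct and follows essentially the same route as the paper: lift the marking to a linear map on $\R^n$, record the matrix $A$ whose columns form the marked basis of the lattice $\Lambda$, and observe that the residual ambiguity is exactly rotation of the universal cover by $\SO(n)$. The only cosmetic difference is that you pass directly to the Gram matrix $A^TA\in\mathcal P_n$ and invoke the identification $\SL(n,\R)/\SO(n)\cong\mathcal P_n$ (which the paper records separately as Proposition~\ref{slpn}), and you replace the paper's appeal to Lemma~\ref{lehtohomotopy} with the equivalent observation that $\mathbb T^n$ is a $K(\Z^n,1)$ so each homotopy class of markings has a canonical affine representative.
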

\begin{proof}

We use methods similar to \S10.2 of \cite{primer}. Given a marked unit volume torus $f: S\to \mathbb{T}^n$, write $S=\R^n/\Lambda$ for a lattice $\Lambda$ of unit covolume. Lift the map $f$ to $\tilde{f}:\R^n\to\R^n$, and let $\zeta_i = \tilde{f}^{-1}(e_i)$ for $i=1,\ldots,n$, where the $e_i$ are the standard basis vectors of $\R^n$. These form an ordered generating set (i.e. a marking) for the lattice $\Lambda$, the coordinates of which form the columns of a matrix in $\SL(n,\R)$. The original choice of $\Lambda$ was unique up to the action of $\SO(n)$ on $\R^n$, and so this specifies an element of $\SL(n,\R)/\SO(n)$. Homotopic markings give the same lattice by Lemma \ref{lehtohomotopy} below. 

Conversely, given a matrix in $\SL(n,\R)$, the columns form an ordered generating set for a unit covolume lattice $\Lambda$. Now, there exists a linear map $\tilde{\phi}:\R^n\to\R^n$ which sends the ordered generating set for $\Lambda$ to the standard basis of $\R^n$. This map descends to a map $\phi:\R^n/\Lambda \to \mathbb{T}^n$ which defines a marked flat torus. Two matrices will give the same marked flat torus if and only if they represent the same coset in $\SL(n,\R)/\SO(n)$. 
\end{proof}

\begin{cor}
There is a natural bijective correspondence $$\mathcal{T}(2)\leftrightarrow\HH^2.$$ 
\end{cor}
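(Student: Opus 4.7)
The plan is to combine the preceding proposition, specialized to $n=2$, with the well-known identification $\SL(2,\R)/\SO(2) \cong \HH^2$. Since the proposition gives $\mathcal{T}(2) \leftrightarrow \SL(2,\R)/\SO(2)$, the corollary reduces to exhibiting a natural bijection between $\SL(2,\R)/\SO(2)$ and the upper half-plane.

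For this, I would recall the transitive action of $\SL(2,\R)$ on $\HH^2$ by M\"obius transformations $\begin{pmatrix} a & b \\ c & d \end{pmatrix} \cdot z = \frac{az+b}{cz+d}$, whose stabilizer at $i \in \HH^2$ is precisely $\SO(2)$. The orbit-stabilizer theorem then yields the bijection $\SL(2,\R)/\SO(2) \leftrightarrow \HH^2$. To match the explicit parametrization $\tau \mapsto \C/(\Z + \tau\Z)$ used in Proposition \ref{h2t1equiv}, one can proceed concretely: identifying $\R^2 \cong \C$, the columns $(\zeta_1, \zeta_2)$ of a matrix in $\SL(2,\R)$ form a positively-oriented $\R$-basis of a unit-covolume lattice, so after rotating by an element of $\SO(2)$ to place $\zeta_1$ on the positive real axis, we obtain $\tau := \zeta_2/\zeta_1 \in \HH^2$, and this quotient is independent of the choice of $\SO(2)$ representative. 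Conversely, $\tau \in \HH^2$ is sent to the $\SO(2)$-coset of any matrix whose columns form a positively-oriented basis of a unit-covolume lattice in $\C$ with ratio $\tau$.

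There is essentially no serious obstacle here: the content lies entirely in the preceding proposition together with the classical $\SL(2,\R)$-equivariant identification with $\HH^2$. The only routine bookkeeping is verifying that the orientation convention (ensuring $\tau \in \HH^2$ rather than the lower half-plane) and the unit-volume normalization are compatible with the description $\tau \mapsto \C/(\Z + \tau\Z)$ used earlier.
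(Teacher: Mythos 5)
Your proposal is correct and follows essentially the same route as the paper: invoke the preceding proposition for $n=2$ and identify $\SL(2,\R)/\SO(2)$ with $\HH^2$ via the transitive action by fractional linear transformations with stabilizer $\SO(2)$. The paper stops there; your extra paragraph tying the coset to the concrete lattice parametrization $\tau \mapsto \C/(\Z+\tau\Z)$ is sound bookkeeping but not part of the paper's argument.
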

\begin{proof}
We need only the identification $\SL(2,\R)/\SO(2)\leftrightarrow\HH^2$, which follows from the fact that $\SL(2,\R)$ acts transitively on $\HH^2$ by fractional linear transformations with point stabilizers isomorphic to $\SO(2)$.
\end{proof}

\begin{lem}
\label{lehtohomotopy}
\begin{enumerate}
\item The group of isometries of a flat $n$-torus acts transitively.
\item If two homeomorphisms $\varphi_i:S\to S'$, $i=0,1$, between flat $n$-tori are homotopic, then they induce the same isomorphism of deck transformation groups acting on $\R^n$.
\end{enumerate}
\end{lem}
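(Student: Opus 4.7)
The plan is to handle the two parts separately, with part (1) essentially immediate from the structure of flat tori as quotients of $\R^n$ by a lattice, and part (2) reducing to a standard covering-space argument combined with the discreteness of the lattice.

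For part (1), I would write $S = \R^n/\Lambda$ for some lattice $\Lambda \subset \R^n$ of unit covolume. Translations of $\R^n$ by vectors in $\R^n$ are Euclidean isometries that descend to well-defined self-maps of $S$ (any translation commutes with translation by elements of $\Lambda$), and these descended maps are isometries of the induced flat metric. Given $p, q \in S$ with lifts $\tilde p, \tilde q \in \R^n$, translation by $\tilde q - \tilde p$ descends to an isometry of $S$ taking $p$ to $q$, giving transitivity.

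For part (2), write $S' = \R^n/\Lambda'$, with covering maps $\pi : \R^n \to S$ and $\pi' : \R^n \to S'$. Let $H : S \times [0,1] \to S'$ be a homotopy from $\varphi_0$ to $\varphi_1$. By the homotopy lifting property, lift $H \circ (\pi \times \mathrm{id})$ to a map $\tilde H : \R^n \times [0,1] \to \R^n$; set $\tilde\varphi_i := \tilde H(\cdot,i)$, which are lifts of $\varphi_i$. For any deck transformation $\gamma$ of $\pi$, i.e.\ translation by some element of $\Lambda$, the map $\tilde H_t \circ \gamma$ is another lift of $\varphi_t := H(\cdot,t)$, so it differs from $\tilde H_t$ by a deck transformation $\delta_t(\gamma) \in \Lambda'$ of $\pi'$; concretely $\delta_t(\gamma)$ is translation by the vector
$$v(t) := \tilde H_t(\gamma \cdot 0) - \tilde H_t(0).$$
The function $t \mapsto v(t)$ is continuous and takes values in the discrete set $\Lambda'$, hence is constant. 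In particular $v(0)=v(1)$, so $\tilde\varphi_0$ and $\tilde\varphi_1$ induce the same isomorphism $\Lambda \to \Lambda'$ of deck transformation groups via this specific pair of lifts.

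Finally, I would observe that the induced isomorphism does not depend on the choice of lifts: any other lifts of $\varphi_i$ are of the form $\tau_i \circ \tilde\varphi_i$ for some $\tau_i \in \Lambda'$, and the induced isomorphism is conjugated by $\tau_i$, which acts trivially since $\Lambda'$ is abelian. Thus $\varphi_0$ and $\varphi_1$ induce the same isomorphism of deck transformation groups. The only subtle step is the continuity-plus-discreteness argument in the middle, but once the lifted homotopy is in hand this is routine; I do not expect a serious obstacle.
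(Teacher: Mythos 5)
Your proof is correct, and the paper does not give its own argument here — it cites Lehto's book for the dimension-$2$ case (Lemma V.6.2 and Theorem IV.3.5 of \cite{lehto}) and observes that the proofs generalize verbatim; those proofs use the same translation argument for transitivity and the same covering-space argument (lift the homotopy, use continuity into the discrete deck group) that you spell out. You also correctly handle the one point the statement leaves implicit, namely that the induced isomorphism $\Lambda \to \Lambda'$ is independent of the choice of lift because $\Lambda'$ is abelian, so conjugation by the ambiguity $\tau_i$ is trivial.
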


See \cite[Lemma V.6.2, Theorem IV.3.5]{lehto} for the dimension 2 case of Lemma \ref{lehtohomotopy}, whose proofs generalize immediately. Next, we consider the metric perspective on $\mathcal{T}(n)$. 

\begin{prop}\label{slpn}
There is a natural bijective correspondence between the quotient $\SL(n,\R)/\SO(n)$ and the space $\mathcal{P}_n$.
\end{prop}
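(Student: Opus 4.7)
The space $\mathcal{P}_n$ should denote the space of positive definite symmetric $n\times n$ real matrices with determinant $1$ (this is the standard notation, and the statement is a classical identification). My plan is to construct the bijection via the polar decomposition, specifically through the map $A \mapsto AA^T$.

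First I would define $\Phi : \SL(n,\R) \to \mathcal{P}_n$ by $\Phi(A) = AA^T$ and check it is well-defined: for $A \in \SL(n,\R)$, the matrix $AA^T$ is symmetric, positive definite (since $x^T AA^T x = \|A^T x\|^2 > 0$ for $x \neq 0$, as $A$ is invertible), and has determinant $\det(A)^2 = 1$.

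Next I would verify that $\Phi$ descends to a well-defined map $\bar{\Phi} : \SL(n,\R)/\SO(n) \to \mathcal{P}_n$. If $Q \in \SO(n)$, then
\[
\Phi(AQ) = AQ(AQ)^T = AQQ^T A^T = AA^T = \Phi(A),
\]
so $\Phi$ is constant on right $\SO(n)$-cosets.

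Then I would prove $\bar{\Phi}$ is a bijection. For surjectivity, given any $P \in \mathcal{P}_n$, the spectral theorem produces a unique symmetric positive definite square root $P^{1/2}$; since $\det(P) = 1$, we have $\det(P^{1/2}) = 1$, so $P^{1/2} \in \SL(n,\R)$ and $\Phi(P^{1/2}) = P^{1/2}(P^{1/2})^T = P$. For injectivity of $\bar{\Phi}$, suppose $AA^T = BB^T$ with $A, B \in \SL(n,\R)$. Setting $Q = B^{-1}A$, we compute
\[
QQ^T = B^{-1}AA^T B^{-T} = B^{-1}BB^T B^{-T} = I,
\]
so $Q \in \SO(n)$, which means $A$ and $B$ determine the same coset in $\SL(n,\R)/\SO(n)$.

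There is no serious obstacle here; the proof is essentially the standard polar decomposition argument. The only subtlety worth highlighting is the "naturality" claim, which in this context means that the bijection is $\SL(n,\R)$-equivariant for the natural actions: $\SL(n,\R)$ acts on its quotient by left multiplication, and on $\mathcal{P}_n$ by $B \cdot P := B P B^T$, and one checks $\bar{\Phi}(BA \cdot \SO(n)) = BAA^T B^T = B \cdot \bar{\Phi}(A \cdot \SO(n))$.
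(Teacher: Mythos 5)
Your proof is correct and takes essentially the same approach as the paper: both identify $\SL(n,\R)/\SO(n)$ with $\mathcal{P}_n$ via $gK \mapsto gg^T$ under the action $g \cdot X = gXg^T$. The paper simply phrases it more compactly as transitivity of the action with stabilizer $\SO(n)$ at the identity, whereas you unwind the same content into explicit verifications of well-definedness, injectivity, and surjectivity (the latter via the symmetric square root, which is implicit in the paper's transitivity claim).
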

\begin{proof}
Let $X\in\mathcal{P}_n$. $\SL(n,\R)$ acts on $\mathcal{P}_n$ by $g\cdot X = gXg^T$, where $g^T$ is the transpose. This is transitive with the stabilizer of the identity matrix precisely $\SO(n)$. Hence $\SL(n,\R)/\SO(n)$ is identified with $\mathcal{P}_n$ as homogeneous spaces of $\SL(n,\R)$ by the map $gK\mapsto gg^T$. 
\end{proof}

Henceforth we will interchangeably refer to points of $\mathcal{T}(n)$ as either marked flat $n$-tori, coset (representatives) $gK\in\SL(n,\R)/\SO(n)$, or as elements of $\mathcal{P}_n$. 

While the columns of a matrix representative of a point $gK$ determine a marked lattice $\Lambda$ which descends to a marked flat torus $\R^n/\Lambda$, the corresponding point $gg^T\in\mathcal{P}_n$ also has a concrete interpretation in the language of flat tori. The matrix $gg^T$ is an explicit realization of the metric tensor for $\R^n/\Lambda$. To see this, use Euclidean coordinates on the standard torus $\mathbb{T}^n = \R^n/\Z^n$. The inner product between two vectors $v_1,v_2\in\R^n \cong T_pX$ for any $p\in X = \R^n/\Lambda$ is given by:
$$
\langle v_1,v_2\rangle_p = \langle v_1g,v_2g\rangle_{\R^n} = \langle v_1gg^T,v_2\rangle_{\R^n}.
$$
This defines a Riemannian metric on the standard torus $\R^n/\Z^n$ which is isometric to $\R^n/\Lambda$. If $\gamma:[0,1]\to\R^n/\Z^n$ is a smooth closed curve, then the length $\ell_X(\gamma)$ is computed as follows:
$$
\ell_X(\gamma) = \int_0^1\sqrt{\langle \gamma'(t)X,\gamma'(t)\rangle} dt
$$
This formula behaves nicely with the action $g\cdot \gamma = \gamma g$ for $g\in\SL(n,\R)$:
$$
\ell_X(g\cdot\gamma) = \int_0^1\sqrt{\langle (\gamma'(t)g)X,\gamma'(t)g\rangle} dt = \int_0^1\sqrt{\langle \gamma'(t)(gXg^T),\gamma'(t)\rangle} dt = \ell_{g\cdot X}(\gamma).
$$

\section{Extremal Lipschitz maps between tori}
\label{extremallipschitz}

Let $[S,f],[S',f']\in\mathcal{T}(n)$, with $S=\R^n/\Lambda$ and $S'=\R^n/\Lambda'$. Our main result in this section is the following:

\begin{prop}
\label{extremalaffine}
The map $\psi:S\to S'$ which lifts to the unique affine map $\tilde{\psi}:\R^n\to\R^n$ realizes the minimal Lipschitz constant in $[f'^{-1}\circ f]$.
\end{prop}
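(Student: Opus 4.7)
The plan is to exploit the rigidity of the homotopy class via its induced action on deck transformations, then use closed geodesics corresponding to lattice vectors to extract the full operator norm as a lower bound on the Lipschitz constant.

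First, by Lemma \ref{lehtohomotopy}(2), the homotopy class $[f'^{-1}\circ f]$ induces a unique group isomorphism $\Lambda\to\Lambda'$, which extends uniquely to a linear map $A\in\SL(n,\R)$. Any lift $\tilde{\phi}:\R^n\to\R^n$ of a map $\phi\in[f'^{-1}\circ f]$ then satisfies the equivariance $\tilde{\phi}(x+\lambda)=\tilde{\phi}(x)+A\lambda$ for all $x\in\R^n$, $\lambda\in\Lambda$. The affine map $\tilde{\psi}(x)=Ax+b$ is one such lift, with everywhere-constant differential equal to $A$; since both $S$ and $S'$ are locally Euclidean, this yields $\mathcal{L}(\psi)=\|A\|_{op}$, the operator norm of $A$.

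The main step is the lower bound $\mathcal{L}(\phi)\geq\|A\|_{op}$ for every Lipschitz $\phi$ in the homotopy class. For each $\lambda\in\Lambda\setminus\{0\}$, the straight segment in $\R^n$ from $0$ to $\lambda$ descends to a piecewise smooth closed loop $\alpha_\lambda\subset S$ of length $\|\lambda\|$ realizing the free homotopy class of $\lambda\in\pi_1(S)\cong\Lambda$. Its image $\phi\circ\alpha_\lambda$ lies in the free homotopy class of $A\lambda\in\Lambda'\cong\pi_1(S')$. Since the shortest representative in any free homotopy class on a flat torus is the corresponding straight Euclidean geodesic, we get
\begin{equation*}
\mathrm{length}(\phi\circ\alpha_\lambda)\geq\|A\lambda\|.
\end{equation*}
The standard fact that Lipschitz maps do not expand curve length by more than their Lipschitz constant gives $\mathrm{length}(\phi\circ\alpha_\lambda)\leq\mathcal{L}(\phi)\cdot\|\lambda\|$. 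Combining these,
\begin{equation*}
\mathcal{L}(\phi)\geq\frac{\|A\lambda\|}{\|\lambda\|}\quad\text{for every }\lambda\in\Lambda\setminus\{0\}.
\end{equation*}

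The conclusion follows by a density argument: the set $\{\lambda/\|\lambda\|:\lambda\in\Lambda\setminus\{0\}\}$ is dense in the unit sphere $S^{n-1}\subset\R^n$, as for any unit vector $v$ and any diameter $C$ of a fundamental domain for $\Lambda$ one can pick $\lambda\in\Lambda$ within distance $C$ of $tv$ and let $t\to\infty$. Continuity of $v\mapsto\|Av\|$ on the sphere then gives $\sup_{\lambda\neq 0}\|A\lambda\|/\|\lambda\|=\|A\|_{op}$, so $\mathcal{L}(\phi)\geq\|A\|_{op}=\mathcal{L}(\psi)$. The only mildly delicate point is the density of lattice directions in $S^{n-1}$; otherwise the argument reduces to the observation that homotopy classes of loops are rigid on a torus and are realized by Euclidean straight-line geodesics of known length.
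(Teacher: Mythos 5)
Your proof is correct, and it takes a genuinely different route from the paper. The paper's proof is functional-analytic: given any $K$-Lipschitz map $g$ in the class, it forms the rescalings $g_k(x)=g(kx)/k$, shows these converge uniformly to the affine map $w$, and invokes the fact that a uniform limit of $K$-Lipschitz maps is $K$-Lipschitz. Your argument is instead geometric: you use the rigidity of the induced map on $\pi_1\cong\Lambda$ (Lemma \ref{lehtohomotopy}(2)) to identify, for each nonzero $\lambda\in\Lambda$, a closed geodesic $\alpha_\lambda$ of length $\|\lambda\|$ whose image under any $\phi$ in the homotopy class must have length at least $\|A\lambda\|$, forcing $\mathcal{L}(\phi)\geq\|A\lambda\|/\|\lambda\|$, and then you pass to the supremum using density of lattice directions in $S^{n-1}$. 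Both proofs are valid. Your approach has the advantage of being more geometric and of essentially subsuming Proposition \ref{kappalambdasame}: you directly extract the operator norm $\|A\|_{op}$ as a supremum of stretch ratios over closed geodesics, which is exactly the content of the identity $\kappa=d_{Th}$ established separately in the paper (and whose proof in the paper uses a nearly identical density argument). The paper's rescaling argument, on the other hand, transfers verbatim to the quasiconformal setting, where the paper reuses it in Proposition \ref{kqcextremal}; your closed-geodesic argument would not generalize as directly to bound the inner and outer quasiconformal dilatations. One small point worth making explicit: the equality $\mathcal{L}(\psi)=\|A\|_{op}$ for the quotient map $\psi:S\to S'$ requires noting that the quotient metric $d_{S'}$ takes an infimum over lifts, but the upper bound still goes through by lifting a minimizing segment in $S$ and pushing it forward, while the lower bound is a consequence of your general inequality applied to $\phi=\psi$.
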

\begin{proof}
Let $S=\R^n/\Lambda$ and $S'=\R^n/\Lambda'$ be tori of volume 1 with markings $f$ and $f'$. Because affine self-maps on flat tori are isometric and transitive we may assume lifts of maps $\varphi:S\to S'$ to $\R^n$ have the property that $\tilde{\varphi}(0)=0$. Let $\mathcal{F}$ denote the class of all such lifts whose quotients are homotopic to $f'^{-1}\circ f$. For $g\in \mathcal{F}$, let $\bar{g}$ denote the induced map $S\to S'$. 

Let $q$ and $q'$ be the quotient maps for $S$ and $S'$, respectively. Then for all $g\in \mathcal{F}$, the following diagram commutes:

\begin{figure}[!ht]
\centering
\begin{tikzcd}
\mathbb{R}^n \arrow[r, "g"] \arrow[d, "q"] & \mathbb{R}^n \arrow[d, "q'"] \\
S \arrow[r, "\bar{g}"] & S' 
\end{tikzcd}
\end{figure}

 Let $\{\omega_1,\ldots,\omega_n\}$ be a basis of $\Lambda$. For any $g_1,g_2\in \mathcal{F}$, it follows that $g_1(\omega_i)=g_2(\omega_i)+\lambda_i$ for some $\lambda_i\in\Lambda$ for each of $i=1,\ldots,n$. By Lemma \ref{lehtohomotopy}, it follows that $\lambda_i=0$ for $i=1,\ldots,n$ since $g_1$ and $g_2$ are homotopic. One then obtains a basis $\{\zeta_1,\ldots,\zeta_n\}$ of $\Lambda'$ such that $\mathcal{F}$ is the class of homeomorphisms $g:\R^n\to\R^n$ with
\begin{equation}
\label{gclass}
g(0)=0,\ g(x+\sum_i^n m_i\omega_i) = g(x) + \sum_i^nm_i\zeta_i
\end{equation}
for all $x\in\R^n$. Notice that any homeomorphism $\R^n\to\R^n$ satisfying Equation \ref{gclass} descends to a map $S\to S'$ homotopic to $f'^{-1}\circ f$. The condition of being affine uniquely determines such a map inside a fundamental domain of $\Lambda$, and hence on all of $\R^n$. This proves uniqueness of the affine map; let $w\in \mathcal{F}$ be the affine map.

Now we show $w$ has the least Lipschitz constant. Let $g\in \mathcal{F}$ be a $K$-Lipschitz map, i.e.
\begin{equation}
\label{lipschitzconstant}
K \geq \sup_{x\neq y}\frac{|g(x)-g(y)|}{|x-y|}.
\end{equation}
Define $g_k(x) = g(kx)/k$ for $k=1,2,\ldots$. These maps are all $K$-Lipschitz and satisfy Equation \ref{gclass}, so $g_k\in\mathcal{F}$ for all $k$. By Lemma \ref{uniformconv} below, $g_k\xrightarrow{k\to\infty}w$ uniformly on $\R^n$. It is a standard fact from real analysis that the pointwise limit of a sequence of $K$-Lipschitz functions is also $K$-Lipschitz. Hence $w$ is $K$-Lipschitz. In other words, $K\geq\mathcal{L}(w)$. Because this holds for any Lipschitz map in $\mathcal{F}$, it follows that $w$ has minimal Lipschitz constant. 
\end{proof}

\begin{lem}
\label{uniformconv}
In the proof of Proposition \ref{extremalaffine}, the sequence $g_k\to w$ uniformly.
\end{lem}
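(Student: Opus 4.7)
The plan is to exploit the fact that although $g$ and $w$ are not themselves periodic, their difference is. Both $g$ and $w$ satisfy the quasi-periodicity relation (\ref{gclass}), with the same translation-of-periods rule $\sum m_i\omega_i \mapsto \sum m_i\zeta_i$. Subtracting, the function $h := g - w$ satisfies $h(x + \sum m_i\omega_i) = h(x)$ for all integers $m_i$, i.e.\ $h$ is $\Lambda$-periodic on $\R^n$. Since $g$ is $K$-Lipschitz and $w$ is affine, $h$ is continuous, hence descends to a continuous function on the compact torus $\R^n/\Lambda$. It is therefore bounded: there exists $M > 0$ such that $|g(x) - w(x)| \le M$ for every $x \in \R^n$.

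Next I would use the linearity of $w$. Because $w$ is affine and $w(0)=0$, we have $w(kx) = k w(x)$ for every $k \in \Z_{>0}$ and every $x \in \R^n$. Hence
\begin{equation*}
|g_k(x) - w(x)| \;=\; \left|\frac{g(kx)}{k} - w(x)\right| \;=\; \frac{|g(kx) - w(kx)|}{k} \;\le\; \frac{M}{k}.
\end{equation*}
The right-hand side is independent of $x$, so $\sup_{x \in \R^n} |g_k(x) - w(x)| \le M/k \to 0$ as $k \to \infty$. This is exactly uniform convergence of $g_k$ to $w$ on $\R^n$.

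There is essentially no obstacle here: the only nontrivial step is recognizing that the difference $g - w$ is $\Lambda$-periodic (and not merely quasi-periodic), which is immediate from the shared transformation rule (\ref{gclass}). Once that observation is made, boundedness follows from compactness of the fundamental domain, and the rescaling $g_k(x) = g(kx)/k$ together with homogeneity of the linear map $w$ turns that uniform bound $M$ into a bound $M/k$, giving the claim.
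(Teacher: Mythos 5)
Your proof is correct and follows essentially the same strategy as the paper: control the difference by reducing to a compact fundamental domain, then use the homogeneity of the linear map $w$ together with the rescaling $g_k(x)=g(kx)/k$ to obtain the $M/k$ decay. The paper reaches the same bound by splitting $kr_i$ into integer and fractional parts and pushing the integer part through $g$ via relation (\ref{gclass}); your observation that $g-w$ is genuinely $\Lambda$-periodic (hence bounded) packages that same manipulation a bit more cleanly.
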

\begin{proof}
Pick $\epsilon>0$ and let $x_0\in\R^n$. Since $\omega_1,\ldots,\omega_n$ are linearly independent, $x_0$ may be written as 
$$
x_0=\sum_{i=1}^n r_i\omega_i
$$
for some $r_i\in\R$, $i=1,\ldots,n$. Let 
$$
M=\sup_{(a_1,\ldots,a_n)\in[0,1]^n}\bigg|g(\sum_{i=1}^n a_i\omega_i)\bigg| + \sum_i^n|\zeta_i|.
$$
This is finite since $g$ is continuous and this domain is compact. Then for any integer $k>M/\epsilon$, we have:
\begin{equation}
\label{uniformconvergence}
 \big|g_k(x_0)-w(x_0)\big| = \frac{1}{k}\big|g(k\sum_{i=1}^nr_i\omega_i) - (k\sum_{i=1}^nr_i\zeta_i)\big|
\end{equation}
since $w$ is affine. Write $kr_i=m_i+t_i$, where $t_i\in[0,1)$ and $m_i\in \Z$, for $i=1,\ldots,n$. In Equation \ref{uniformconvergence}, the integer part $m_i$ of each term $kr_i$ factors through $g$. We then compute:
$$
 \big|g_k(x_0)-w(x_0)\big| =\frac{1}{k}|g(\sum_{i=1}^n t_i\omega_i)- \sum_{i=1}^n t_i\zeta_i|\leq \frac{1}{k}M<\epsilon.
$$
\end{proof}

It is also known that the extremal quasiconformal map for the Teichm\"uller distance is unique (see \cite{lehto}, Theorem 6.3). Interestingly, there are many extremal Lipschitz maps, at least in some cases.

\begin{prop}
\label{nonuniqueaffine}
There exists a pair of marked flat 2-tori with an infinite family of distinct homeomorphisms respecting the markings, all of which realize the extremal Lipschitz constant.  
\end{prop}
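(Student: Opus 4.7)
The plan is to exhibit an explicit pair of marked flat $2$-tori together with a one-parameter family of extremal Lipschitz maps obtained by perturbing the affine map of Proposition \ref{extremalaffine} by a small periodic ``shear'' in its minor-stretch direction. The underlying intuition is that the Lipschitz constant of a smooth map equals the supremum of the operator norms of its Jacobian, which in two dimensions is the largest singular value. Since the extremal affine map between two non-isometric tori has two distinct singular values, one can modify the Jacobian so that the smaller singular value is altered by a periodic function without exceeding the larger one, leaving the overall Lipschitz constant unchanged while producing a genuinely different map.

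Concretely, I would take $S = \R^2/\Z^2$ and $S' = \R^2/\Lambda'$ with $\Lambda' = 2\Z \oplus \tfrac{1}{2}\Z$, equipped with their obvious markings. Proposition \ref{extremalaffine} identifies the extremal map with the one lifting to $\tilde w(x,y) = (2x,\, y/2)$, whose Jacobian is $\diag(2,\tfrac12)$, so $\mathcal{L}(w) = 2$. I would then propose the family
\[
\tilde\psi_\epsilon(x,y) \;=\; \bigl(\,2x,\; \tfrac{y}{2} + \epsilon \sin(2\pi y)\,\bigr),\qquad 0 < \epsilon < \tfrac{1}{4\pi}.
\]
The verification has two pieces. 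First, to confirm that each $\tilde\psi_\epsilon$ descends to a map $S\to S'$ in the homotopy class $[f'^{-1}\circ f]$, I would check the translation-equivariance of Equation \ref{gclass}: the periodicity of $\sin(2\pi y)$ gives $\tilde\psi_\epsilon(x+1,y) - \tilde\psi_\epsilon(x,y) = (2,0)$ and $\tilde\psi_\epsilon(x,y+1) - \tilde\psi_\epsilon(x,y) = (0,\tfrac12)$, both in $\Lambda'$, which by the proof of Proposition \ref{extremalaffine} suffices. Second, I would compute $D\tilde\psi_\epsilon = \diag\bigl(2,\; \tfrac12 + 2\pi\epsilon\cos(2\pi y)\bigr)$; for $\epsilon < \tfrac{1}{4\pi}$ the lower-right entry lies in $(0,1)$, so the operator norm is identically $2$ and the map is an orientation-preserving diffeomorphism. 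By the mean value inequality $\psi_\epsilon$ is $2$-Lipschitz, and Proposition \ref{extremalaffine} prevents it from being any less. Distinctness of the maps for different $\epsilon$ is visible at, say, the point $(0,\tfrac14)$.

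The calculations are short, so there is no serious obstacle. The only point that needs some care is the bookkeeping that all $\tilde\psi_\epsilon$ represent the same marking class as $\tilde w$, but this is precisely the translation-equivariance check above and is immediate from the characterization of $\mathcal{F}$ given in the proof of Proposition \ref{extremalaffine}. The choice of $\sin(2\pi y)$ is inessential; any periodic $C^1$ function $h$ with $\|h'\|_\infty$ sufficiently small would produce another extremal Lipschitz map, so the construction in fact yields an infinite-dimensional space of examples.
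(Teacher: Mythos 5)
Your proof is correct and takes essentially the same approach as the paper: both fix the major-stretch direction of the affine map and perturb only in the minor-stretch direction, so that the operator norm of the Jacobian stays equal to the affine Lipschitz constant everywhere. The only difference is cosmetic — you use a smooth sinusoidal perturbation $y\mapsto y/2 + \epsilon\sin(2\pi y)$, whereas the paper uses a piecewise-linear perturbation built from two linear pieces with a kink at an adjustable height.
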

\begin{proof}
Let $S$ be the square $[0,1]\times[0,1]\subset\R^2$ and $T$ be the rectangle $[0,r]\times[0,1/r]$. These regions $S$ and $T$ represent fundamental domains for two flat tori. An extremal Lipschitz map is given by $(x,y)\mapsto(rx,y/r)$ with Lipschitz constant $r$. Fix $r>1$. Choose $\epsilon\in(-1/2,1/2)$ and $\delta$ such that
$$
\max\{0,\frac{1}{r}-\frac{r}{2}+\epsilon r\}<\delta < \min\{\frac{1}{r},\frac{r}{2}+\epsilon r\}.
$$
Define the map $F:S\to T$ by:
$$
F(x,y) =
\begin{cases}
\big(rx, \frac{1/r-\delta}{1/2-\epsilon} y\big) & y\leq 1/2 - \epsilon\\
\big(rx, \big(\frac{1}{r}-\delta\big) + \frac{y-(1/2-\epsilon)}{1/2+\epsilon}\delta & y\geq 1/2 - \epsilon
\end{cases}
$$
See the figure for an explanation of these values. 
\begin{figure}[htbp]
\label{nonuniquepic}
\includegraphics[width = 0.6\textwidth]{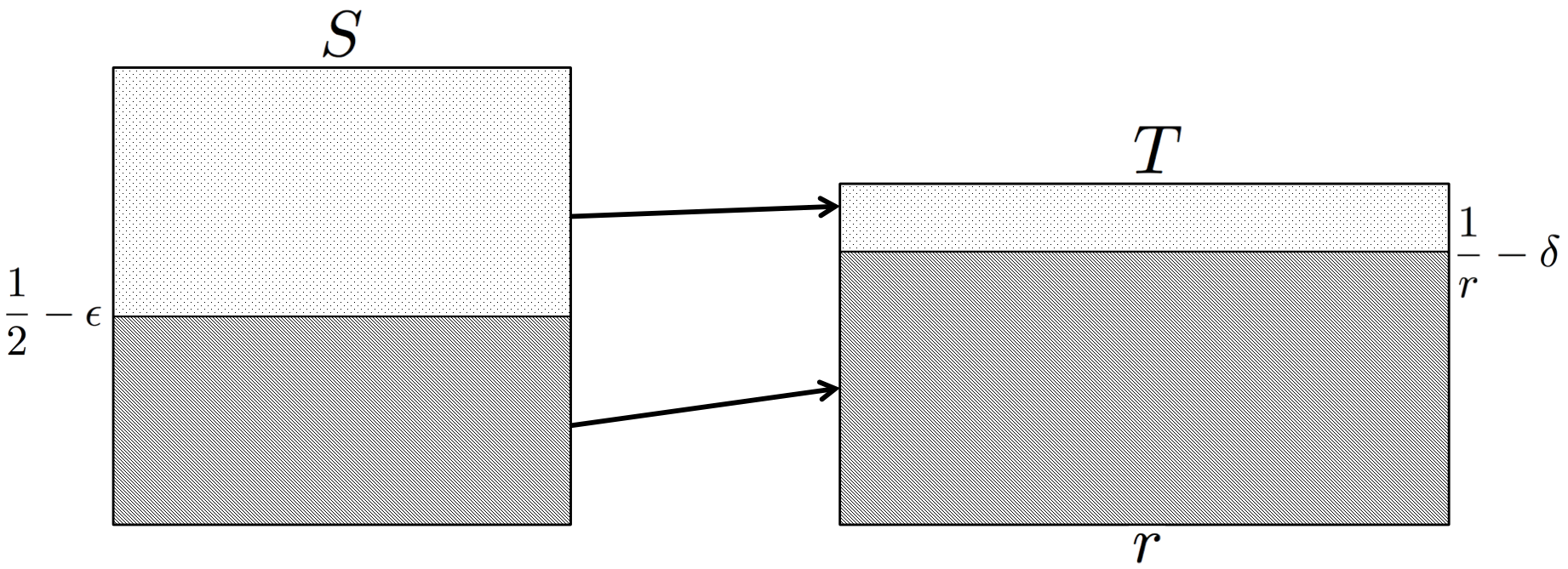}
\caption{The map $F$ sends the two portions of the square linearly to the two similarly-shaded portions of the rectangle.}
\end{figure}

This map is linear in the $x$-direction (the direction of maximum stretch), but only piecewise linear in the $y$-direction. The affine map occurs at $\epsilon=0$ and $\delta=(2r)^{-1}$. This map projects onto a homeomorphism of the corresponding tori since it respects the boundaries. The map $F$ is differentiable almost everywhere, and the total derivatives on the top and bottom halves of the domain are respectively given by:
$$
D_{\text{bottom}} = 
\begin{pmatrix}
r & 0\\
0 & \frac{1/r-\delta}{1/2-\epsilon}
\end{pmatrix},\ 
D_{\text{top}} = 
\begin{pmatrix}
r & 0 \\
0 & \frac{\delta}{1/2+\epsilon}
\end{pmatrix}
$$
With the above constraints on $\epsilon$ and $\delta$, one can see from $D_{\text{top}}$ and $D_{\text{bottom}}$ that the Lipschitz constant for $F$ is $r$, as desired. 
\end{proof}

In contrast to the case of the affine map, the inverses of the maps constructed in Proposition \ref{nonuniqueaffine} are not Lipschitz-extremal. The above construction generalizes easily to the case of higher dimensions.

\begin{cor}
\label{nonuniquentori}
There exists a pair of flat tori in any dimension $n\geq2$ with infinitely many homotopic homeomorphisms respecting the markings which all realize the extremal Lipschitz constant. 
\end{cor}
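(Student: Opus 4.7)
The plan is to embed the two-dimensional construction of Proposition \ref{nonuniqueaffine} into the $n$-dimensional setting, letting the extra coordinates act as the identity. First I would fix $r>1$ and take as fundamental domains of two unit-volume flat $n$-tori the boxes $S=[0,1]^n$ and $T = [0,r]\times[0,1/r]\times[0,1]^{n-2}$. The unique affine map between them is $(x_1,\ldots,x_n)\mapsto(rx_1,x_2/r,x_3,\ldots,x_n)$, whose derivative is the diagonal matrix $\diag(r,1/r,1,\ldots,1)$, and by Proposition \ref{extremalaffine} this gives the extremal Lipschitz constant, namely $r$.

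Next, for each pair $(\epsilon,\delta)$ satisfying the constraints in Proposition \ref{nonuniqueaffine}, I would define $F_{\epsilon,\delta}^{(n)}\colon S\to T$ by applying the two-dimensional map $F_{\epsilon,\delta}$ of Proposition \ref{nonuniqueaffine} to the first two coordinates and the identity to the remaining ones:
$$
F_{\epsilon,\delta}^{(n)}(x_1,x_2,x_3,\ldots,x_n) = \bigl(F_{\epsilon,\delta}(x_1,x_2),\ x_3,\ldots,x_n\bigr).
$$
Since $F_{\epsilon,\delta}$ respects the boundary identifications of the unit square with the rectangle, $F_{\epsilon,\delta}^{(n)}$ respects the boundary identifications of $S$ with $T$, and hence descends to a homeomorphism of the corresponding tori. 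All of these descended homeomorphisms are homotopic to each other and to the affine map: they satisfy Equation \ref{gclass} with the same ordered lattice bases, so they all represent the single homotopy class picked out by the marking data.

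To verify extremality, I would inspect the total derivative of $F_{\epsilon,\delta}^{(n)}$, which is defined almost everywhere and is block-diagonal, consisting of the $2\times 2$ derivative of $F_{\epsilon,\delta}$ in the upper-left block and the $(n-2)\times(n-2)$ identity in the lower-right block. The operator norm of the $2\times 2$ block is $r$ by the computation in Proposition \ref{nonuniqueaffine}, and the lower block has operator norm $1<r$, so the operator norm of the full derivative is $r$ almost everywhere. Thus $\mathcal{L}(F_{\epsilon,\delta}^{(n)})=r$ for every admissible $(\epsilon,\delta)$, producing an infinite family of distinct extremal Lipschitz maps in a single homotopy class.

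I do not foresee a serious obstacle here, since the construction delegates all of the real work to Proposition \ref{nonuniqueaffine}; the only point requiring care is that stretching by the identity in the new coordinates does not increase the operator norm of the block-diagonal derivative, which is immediate from $r>1$.
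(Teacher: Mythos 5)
Your proposal is correct and follows the same route as the paper: take the product of the 2-torus construction from Proposition \ref{nonuniqueaffine} with $(n-2)$ unit circles, extend the extremal family by the identity in the new factors, and observe that the block-diagonal derivative keeps the Lipschitz constant at $r$. The paper states this more tersely (describing the tori as $S\times(S^1)^{n-2}$ and $T\times(S^1)^{n-2}$), while you spell out the fundamental domains and the operator-norm check, but the argument is the same.
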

\begin{proof}

Let $S$ and $T$ be the two marked flat 2-tori from Proposition \ref{nonuniqueaffine} and let $S' = S\times(S^1)^{n-2}$ and $T' = T\times (S^1)^{n-2}$ with the product metrics, where each new copy of $S^1$ is isometric to a unit circle. An infinite extremal family is given by using the family from Proposition \ref{nonuniqueaffine} on the $S$ and $T$ components, and the identity on the remaining components. 
\end{proof}

\begin{rem}
 It is straightforward to generalize the above construction for any two rectangular tori, but it is unclear whether all pairs of tori admit many distinct Lipschitz-extremal maps, and if not, when they are unique.  
\end{rem}

\section{Thurston's metric for $n$-dimensional flat tori}
\label{ntorusthurston}

\begin{defn}
\emph{Thurston's metric} $d_{Th}$ on $\mathcal{T}(n)$ is defined as follows:
$$
d_{Th}([S,f],[S',f']) = \frac{1}{2}\log\inf_{\phi\in[f'^{-1}\circ f]}\mathcal{L}(\phi)
$$
$$
\mathcal{L}(\phi) = \sup_{x,y\in S,\ x\neq y}\frac{d_{S'}(\phi(x),\phi(y))}{d_S(x,y)}
$$
where the infimum is over all Lipschitz homeomorphisms homotopic to $f'^{-1}\circ f$. 
\end{defn}

This is identical to the definition for hyperbolic surfaces. Proposition 2.1 in \cite{thurston} gives a geometric proof that the Thurston metric is positive-definite for $\mathcal{T}(S_g)$, which works similarly for our case. 

\begin{prop}
\label{thurstonprop}
For all points $[S,f],[S'f']\in\mathcal{T}(n)$, we have 
$$
d_{Th}([S,f],[S',f'])\geq0,
$$
with equality only if $[S,f]=[S',f']$. 
\end{prop}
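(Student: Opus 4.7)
The plan is to prove both non-negativity and the equality case by reducing everything to the affine representative supplied by Proposition \ref{extremalaffine}, and then invoking volume preservation.

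For the non-negativity statement, I would use a volume-comparison argument. Any Lipschitz homeomorphism $\phi: S \to S'$ lifts to a Lipschitz map on $\R^n$, and by Rademacher's theorem it is differentiable almost everywhere with $\|D\phi(x)\|_{op} \leq \mathcal{L}(\phi)$ a.e. The area formula then yields
\[
\V(S') = \int_S |\det D\phi| \, dx \leq \mathcal{L}(\phi)^n \V(S).
\]
Since $S$ and $S'$ are unit-volume flat $n$-tori, this forces $\mathcal{L}(\phi) \geq 1$, so $d_{Th}([S,f],[S',f']) \geq 0$ for every competitor $\phi$, and hence for the infimum.

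For the equality case, suppose $d_{Th}([S,f],[S',f']) = 0$. By Proposition \ref{extremalaffine}, the infimum defining $d_{Th}$ is attained by the unique affine map $\psi$ in the homotopy class, which lifts to a linear map $\tilde\psi(x) = Ax$ with $A \Lambda = \Lambda'$. The Lipschitz constant of $\psi$ is precisely the operator norm $\|A\|_{op}$, i.e. the largest singular value $\sigma_1$ of $A$. Since both tori have unit volume we have $|\det A| = \sigma_1\sigma_2\cdots \sigma_n = 1$; combined with $\sigma_1 = \mathcal{L}(\psi) = 1$ and $0 < \sigma_n \leq \cdots \leq \sigma_1$, this forces all singular values to equal $1$, so $A \in O(n)$. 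Orientation preservation of $f$ and $f'$ places $A$ in $\SO(n)$, so $\tilde\psi$ is a Euclidean isometry, and $\psi$ realizes an isometry $S \to S'$ making the defining diagram of $\mathcal{T}(n)$ commute. Hence $[S,f] = [S',f']$.

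The only mildly delicate point is ensuring that the volume-contraction inequality applies to all Lipschitz homeomorphisms, not just smooth ones, which is handled by Rademacher's theorem and the Lipschitz change-of-variables formula; the rest is elementary linear algebra once one has the affine representative guaranteed by Proposition \ref{extremalaffine}.
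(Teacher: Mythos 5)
Your proof is correct, but it takes a genuinely different route from the paper's. The paper's proof is a single geometric argument: assuming $d_{Th}\leq 0$, it takes the extremal $L$-Lipschitz map $\phi$ with $L\leq 1$ and argues that under $\phi$ small balls map into balls of at most the same radius; because both tori have unit volume, covering by disjoint small balls forces each ball to map onto one of the same size, and letting the radius shrink yields that $\phi$ is an isometry. This handles non-negativity and the equality case at once via a direct measure/ball-covering argument, without ever appealing to the affine structure.

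You instead split the argument: for non-negativity you invoke Rademacher's theorem and the Lipschitz area formula to get $1 = \V(S') \leq \mathcal{L}(\phi)^n\,\V(S) = \mathcal{L}(\phi)^n$, which works for every competitor $\phi$; and for the equality case you pass to the affine representative furnished by Proposition \ref{extremalaffine} and run a singular-value argument ($\sigma_1 = 1$ together with $\sigma_1\cdots\sigma_n = 1$ and $\sigma_i\leq\sigma_1$ forces all $\sigma_i=1$, hence $A\in \SO(n)$). Both proofs boil down to volume preservation, but yours is more analytically explicit and leans directly on the extremal-affine machinery, while the paper's is more elementary and self-contained (it needs only that an extremal Lipschitz map exists, not that it is affine). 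One could argue your version is tighter on the equality case, since the paper's statement ``one sees that each disk must map surjectively onto a disk of the same size'' is left somewhat informal, whereas your singular-value computation is airtight once one grants Proposition \ref{extremalaffine}.
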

\begin{proof}
Suppose we have $[S,f],[S',f']$ such that $d_{Th}([S,f],[S',f'])\leq0$. Then by compactness there exists a homeomorphism $\phi:S\to S'$ in the appropriate homotopy class with realizing the extremal Lipschitz constant $L\leq1$. 

Under $\phi$ every sufficiently small ball of radius $r$ in the domain space is mapped to a subset of a ball of radius $\leq r$ in the target. However, both surfaces have unit volume. If we cover the domain space by a disjoint union of balls of full measure, one sees that each disk must map surjectively onto a disk of the same size. This procedure works for arbitrarily small balls, and so $\phi$ is an isometry. 
\end{proof}

Because composing Lipschitz maps with constants $L_1$ and $L_2$ gives a Lipschitz map with constant at most $L_1L_2$, the triangle inequality for $d_{Th}$ follows. Together with Proposition \ref{thurstonprop}, we have that $d_{Th}$ is a (possibly asymmetric) metric. We will need a quick classical fact before we can state a formula for $d_{Th}$. 

\begin{lem}
\label{operatornorm}
The Lipschitz constant of a linear map $M:\R^n\to\R^n$ is given by
$$
\max\{\sqrt{|\lambda|}\ :\ \lambda\text{ is an eigenvalue of }M^TM\}.
$$
\end{lem}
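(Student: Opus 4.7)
The plan is to reduce the computation of the Lipschitz constant to a standard exercise in the spectral theory of symmetric matrices. Since $M$ is linear, we have $|M(x)-M(y)| = |M(x-y)|$, so
$$
\mathcal{L}(M) = \sup_{x \neq y}\frac{|M(x)-M(y)|}{|x-y|} = \sup_{v \neq 0}\frac{|Mv|}{|v|} = \sup_{|v|=1}|Mv|.
$$
Hence it suffices to find the maximum of $|Mv|^2$ on the unit sphere, and the key identity is $|Mv|^2 = \langle Mv, Mv\rangle = \langle M^TMv, v\rangle$.

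Next I would invoke the spectral theorem. The matrix $A := M^TM$ is symmetric, so it is orthogonally diagonalizable with real eigenvalues; moreover $\langle Av,v\rangle = |Mv|^2 \geq 0$ shows $A$ is positive semidefinite, so its eigenvalues $\lambda_1 \geq \lambda_2 \geq \cdots \geq \lambda_n \geq 0$ are non-negative (in particular $|\lambda_i| = \lambda_i$). Choose an orthonormal eigenbasis $\{v_1,\ldots,v_n\}$ with $Av_i = \lambda_i v_i$. For any unit vector $v = \sum c_i v_i$ with $\sum c_i^2 = 1$, orthonormality gives
$$
\langle Av, v\rangle = \sum_{i=1}^n \lambda_i c_i^2 \leq \lambda_1 \sum_{i=1}^n c_i^2 = \lambda_1,
$$
with equality attained at $v = v_1$. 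Thus $\sup_{|v|=1}|Mv|^2 = \lambda_1$, which gives $\mathcal{L}(M) = \sqrt{\lambda_1} = \max\{\sqrt{|\lambda|} : \lambda \text{ eigenvalue of } M^TM\}$, as claimed.

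There is no real obstacle here, since the statement is a packaging of the standard fact that the operator norm of a matrix equals its largest singular value. The only subtlety worth remarking on is the appearance of the absolute value in the statement, which is harmless because $M^TM$ is positive semidefinite and so its eigenvalues are automatically non-negative.
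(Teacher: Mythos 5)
Your proof is correct and takes essentially the same route as the paper: both reduce the Lipschitz constant to the operator norm $\|M\|_{op}$ and then pass to the symmetric matrix $M^TM$ via $\|M\|_{op}^2 = \|M^TM\|_{op}$, identifying the latter with the largest eigenvalue. The paper simply cites the standard fact about operator norms of diagonalizable matrices, whereas you unpack it with the spectral theorem — a more self-contained version of the same argument.
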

\begin{proof}
First, recall $\mathcal{L}(M)=||M||_{op}$, the operator norm of $M$:
$$
\mathcal{L}(M)=\sup_{x\neq y}\frac{||Mx-My||}{||x-y||} = \sup_{x\neq0}\frac{||Mx||}{||x||}=||M||_{op}.
$$
Since the operator norm of a diagonalizable matrix is the absolute value of the largest eigenvalue, using $||M^TM||_{op} = ||M||_{op}^2$ the result follows. 
\end{proof}

Next, we will derive a formula for easy computation using the structure of the symmetric space $\SL(n,\R)/\SO(n)$.

\begin{thm}
\label{lipschitzmetricfmla}
Let $Y,X$ be positive-definite symmetric matrices corresponding to points of $\mathcal{T}(n)$. Thurston's metric $d_{Th}$ on $\mathcal{T}(n) = \SL(n,\R)/\SO(n)$ is given by the following formula:
\begin{equation}
\label{dthfmla}
d_{Th}(Y,X) = \frac{1}{2}\max\{\log|\lambda|\ :\ \lambda\text{ is an eigenvalue of }XY^{-1}\}
\end{equation}
\end{thm}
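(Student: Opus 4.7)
The plan is to use Proposition \ref{extremalaffine} to reduce the infimum defining $d_{Th}$ to an explicit Lipschitz-constant computation, and then evaluate that constant via a generalized Rayleigh quotient.

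\medskip

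Realize the two marked tori as the standard torus $\R^n/\Z^n$ equipped with the metric tensors $Y$ and $X$ respectively, as described at the end of Section \ref{teichmullertori}. By Proposition \ref{extremalaffine} the infimum defining $d_{Th}(Y,X)$ is attained by the unique affine representative of the relevant homotopy class, which under this presentation is simply the identity map $\mathrm{id}:(\R^n/\Z^n,Y)\to(\R^n/\Z^n,X)$. It therefore suffices to compute $\mathcal{L}(\mathrm{id})$. Since $\mathrm{id}$ lifts to the identity on the universal cover $\R^n$ with its translation-invariant flat metrics, its Lipschitz constant equals the pointwise ratio of tangent-space norms:
$$
\mathcal{L}(\mathrm{id})^{2} = \sup_{v\in\R^n\setminus\{0\}}\frac{v^{T}Xv}{v^{T}Yv}.
$$

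\medskip

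Substituting $u=Y^{1/2}v$ (with $Y^{1/2}$ the unique symmetric positive-definite square root of $Y$), the right-hand side becomes the standard Rayleigh quotient of $Y^{-1/2}XY^{-1/2}$, a symmetric positive-definite matrix whose supremum equals its largest eigenvalue. Since $Y^{-1/2}XY^{-1/2}$ is similar via conjugation by $Y^{1/2}$ to $XY^{-1}$, they share the same spectrum, which consists entirely of positive real numbers. Hence
$$
\mathcal{L}(\mathrm{id})^{2} = \lambda_{\max}(XY^{-1}) = \max\bigl\{|\lambda|:\lambda\text{ is an eigenvalue of }XY^{-1}\bigr\},
$$
and taking logarithms together with the definition of $d_{Th}$ yields the stated formula \eqref{dthfmla}.

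\medskip

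The main technical point to verify is that the global Lipschitz constant between the compact flat tori equals the pointwise supremum of the derivative's operator norm on the universal cover. This is essentially immediate in the translation-invariant flat setting: minimizing geodesics on each torus lift to straight line segments in $\R^n$, so the infinitesimal bound extends to a uniform global bound on the cover which descends to the compact quotient without loss. An alternative route, perhaps closer in spirit to the paper's earlier setup, is to apply Lemma \ref{operatornorm} directly to the linear map $X^{1/2}Y^{-1/2}:\R^n\to\R^n$, which is conjugate via the Euclidean isometries $Y^{1/2}$ and $X^{1/2}$ to $\mathrm{id}$ viewed between the two Riemannian structures; this yields the same spectral identification without appealing to the Rayleigh quotient.
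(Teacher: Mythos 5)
Your approach is essentially the paper's: invoke Proposition \ref{extremalaffine} to pass to the unique affine representative, compute its Lipschitz constant as an operator norm, and match the resulting spectrum with that of $XY^{-1}$. Realizing both tori as $\R^n/\Z^n$ with metric tensors $Y$ and $X$ (so that the affine map is the identity and its Lipschitz constant is a Rayleigh quotient) is a minor repackaging of the paper's computation with coset representatives $h,g$ and linear map $gh^{-1}$; your final paragraph already notes that choosing $g=X^{1/2}$, $h=Y^{1/2}$ reduces one to the other via Lemma \ref{operatornorm}.

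The one step to flag is your last sentence. You correctly establish $\mathcal{L}(\mathrm{id})^{2} = \lambda_{\max}(XY^{-1})$, hence $\mathcal{L}(\mathrm{id}) = \sqrt{\lambda_{\max}(XY^{-1})}$, and feeding this into the definition $d_{Th} = \tfrac{1}{2}\log\inf\mathcal{L}$ produces $d_{Th}(Y,X) = \tfrac{1}{4}\max\{\log|\lambda|\}$, which is \emph{half} of \eqref{dthfmla}, not \eqref{dthfmla} itself. The paper's own proof reaches \eqref{dthfmla} by citing Lemma \ref{operatornorm} without the square root (asserting the Lipschitz constant equals $\max|\lambda|$ where the lemma gives $\max\sqrt{|\lambda|}$). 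A concrete check exposes the mismatch: take $Y=I$ and $X=\diag(r^2,1/r^2)$ in $\mathcal{T}(2)$, for which the affine map is $\diag(r,1/r)$ with Lipschitz constant $r$, so $d_{Th}=\tfrac{1}{2}\log r$, while formula \eqref{dthfmla} returns $\tfrac{1}{2}\log(r^2)=\log r$. So your Rayleigh-quotient calculation is correct, but the conclusion ``taking logarithms $\ldots$ yields the stated formula'' does not actually follow as written; it reveals a factor-of-two normalization discrepancy in the theorem statement (relative to the given definition of $d_{Th}$) rather than confirming \eqref{dthfmla}.
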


\begin{proof}
Let $h\SO(n)$ and $g\SO(n)$ be points in $\SL(n,\R)/\SO(n)$ corresponding to $Y$ and $X$. The linear map between them is given by $gh^{-1}$, which by Proposition \ref{extremalaffine} is an extremal Lipschitz map. By Lemma \ref{operatornorm}, the Lipschitz constant is given by 
$$
\lambda_0 := \max\{|\lambda|\ :\ \lambda\text{ is an eigenvalue of }{(h^{-1})}^Tg^Tgh^{-1}\}
$$
Because $$XY^{-1} = g^Tgh^{-1}{(h^T)}^{-1} \sim {(h^{-1})}^Tg^Tgh^{-1}$$ are similar matrices, they have the same eigenvalues, and the result follows. 
\end{proof}

Notice that if $Y=I$, the absolute values in Equation \ref{dthfmla} are redundant since $X$ is positive-definite. 

\begin{cor}
The Thurston metric is $\SL(n,\R)$ invariant for the action on $\SL(n,\R)/\SO(n)\cong\mathcal{T}(n)$. 
\end{cor}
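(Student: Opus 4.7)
The plan is to prove the corollary as a direct consequence of the explicit formula in Theorem \ref{lipschitzmetricfmla}. Recall that under the identification $\SL(n,\R)/\SO(n) \cong \mathcal{P}_n$ established in Proposition \ref{slpn}, the action of $a \in \SL(n,\R)$ on a point represented by $X \in \mathcal{P}_n$ is $a \cdot X = a X a^T$. Thus the goal is to verify
\[
d_{Th}(a \cdot Y, a \cdot X) = d_{Th}(Y, X)
\]
for all $a \in \SL(n,\R)$ and all $Y, X \in \mathcal{P}_n$.

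First I would compute $(a \cdot X)(a \cdot Y)^{-1}$ explicitly. Since $(aYa^T)^{-1} = (a^T)^{-1} Y^{-1} a^{-1}$, we obtain
\[
(a X a^T)(a Y a^T)^{-1} = a X a^T (a^T)^{-1} Y^{-1} a^{-1} = a \bigl(X Y^{-1}\bigr) a^{-1}.
\]
This shows that $(a \cdot X)(a \cdot Y)^{-1}$ is similar to $X Y^{-1}$, and hence the two matrices have the same multiset of eigenvalues.

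Finally, applying Theorem \ref{lipschitzmetricfmla} to both sides, the Thurston distance depends only on the eigenvalues of $X Y^{-1}$ (taken with absolute value and logarithm, then maximized), so the equality $d_{Th}(a \cdot Y, a \cdot X) = d_{Th}(Y, X)$ follows immediately. There is no real obstacle here; the corollary is essentially a one-line consequence of the formula combined with the invariance of the spectrum under conjugation. The only thing to be careful about is to write the action on $\mathcal{P}_n$ correctly (as $X \mapsto a X a^T$, not $X \mapsto aX$), so that the inverse and the conjugation cancel as displayed.
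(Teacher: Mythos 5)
Your proof is correct and takes essentially the same approach as the paper, which also deduces the corollary immediately from the formula in Theorem \ref{lipschitzmetricfmla} and the action $g\cdot X = gXg^T$; you have simply spelled out the conjugation and spectrum-invariance step that the paper leaves implicit.
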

\begin{proof}
This is immediate from the formula and the definition of the action $g\cdot X = gXg^T$. 
\end{proof}

\begin{cor}
\label{dim2thurston}
The Thurston metric on $\mathcal{T}(2)$ is equal to the Riemannian symmetric metric on $\SL(2,\R)/\SO(2)$, and hence matches the Teichm\"uller metric and hyperbolic metric up to scaling.
\end{cor}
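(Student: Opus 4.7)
The cleanest route is to exploit that $\SL(2,\R)/\SO(2)$ is a rank-one Riemannian symmetric space. By the corollary immediately preceding the statement, $d_{Th}$ is $\SL(2,\R)$-invariant. In rank one, $\SO(2)$ acts transitively on the unit tangent sphere at the basepoint $I \in \mathcal{P}_2$, which forces any $\SL(2,\R)$-invariant length metric to be a scalar multiple of the unique-up-to-scale invariant Riemannian metric — i.e., of the hyperbolic metric under the identification $\SL(2,\R)/\SO(2) \leftrightarrow \HH^2$. So the content of the corollary reduces to identifying a single scaling constant.

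\textbf{Pinning down the scalar.} I would evaluate both metrics along a one-parameter subgroup that realizes a geodesic of the symmetric space. Take $X_t = \diag(e^{-t},e^t) \in \mathcal{P}_2$ for $t \in \R$. By Theorem \ref{lipschitzmetricfmla}, the eigenvalues of $X_t I^{-1} = X_t$ are $e^{\pm t}$, so
\[
d_{Th}(I, X_t) \;=\; \tfrac{1}{2}\max\{t, -t\} \;=\; |t|/2.
\]
On the other hand, under the identification described in the proof of Proposition \ref{slpn} together with $\tau \mapsto \C/(\Z+\tau\Z)$, the matrix $X_t$ corresponds to $\tau = i e^{t} \in \HH^2$, whose distance from $i$ in the Poincar\'e upper-half-plane metric is $|t|$. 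Combined with the rank-one invariance argument, this shows $d_{Th} = \tfrac{1}{2} d_{hyp}$ globally.

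\textbf{Conclusion for the other metrics.} Proposition \ref{h2t1equiv} gives $d_{Teich} = d_{hyp}$ under $\tau \mapsto \C/(\Z+\tau\Z)$, and the identification $\mathcal{T}(2) \leftrightarrow \HH^2$ established earlier is the same one. Thus on $\mathcal{T}(2)$ one has
\[
d_{Th} \;=\; \tfrac{1}{2}\, d_{Teich} \;=\; \tfrac{1}{2}\, d_{hyp},
\]
and each of these is a scalar multiple of the invariant Riemannian metric on $\SL(2,\R)/\SO(2)$, which is the assertion of the corollary.

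\textbf{Main obstacle.} There is no real obstruction: the rank-one argument is a soft symmetry argument and the comparison calculation along $X_t$ is a three-line verification using Theorem \ref{lipschitzmetricfmla}. The only delicate point is bookkeeping — making sure the normalization of the Riemannian symmetric metric and the passage through $\tau \leftrightarrow g \leftrightarrow gg^T$ are consistent — so the statement "equal to the Riemannian symmetric metric" is read with the normalization determined by the $|t|/2$ computation above.
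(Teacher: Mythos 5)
Your proposal is correct but takes a genuinely different route from the paper. The paper's proof is a direct two-line calculation: it quotes the explicit distance formula $d(X,Y)=\sqrt{\sum_i(\log\lambda_i)^2}$ for the Riemannian symmetric metric on $\mathcal{P}_n$, observes that for $2\times2$ determinant-one matrices the eigenvalues of $XY^{-1}$ come in a pair $\{\lambda,1/\lambda\}$, and simplifies to $\sqrt{2}|\log\lambda|$, which is a scalar multiple of the $\tfrac12|\log\lambda|$ from Theorem \ref{lipschitzmetricfmla}. Your argument instead invokes the rank-one structure: $\SO(2)$ acts transitively (by conjugation) on the unit sphere of $\mathfrak{p}$, so any $\SO(2)$-invariant norm on $T_I\mathcal{T}(2)$ — symmetric or not — is forced to be a scalar multiple of the Euclidean one, and hence any $\SL(2,\R)$-invariant Finsler length metric is a multiple of the Riemannian one; you then calibrate the constant along one geodesic. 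For this to close, you need that $d_{Th}$ really is the length metric of a Finsler norm, which is supplied by Proposition \ref{finslermetricformula} appearing just before the corollary, so the dependency chain is intact. The trade-off: the paper's computation is shorter and entirely self-contained, while your argument is more structural and actually explains \emph{why} the coincidence happens only when $n=2$ — in higher rank $\SO(n)$ no longer acts transitively on the unit sphere of $\mathfrak{p}$, so invariant Finsler metrics need not be proportional to the Riemannian one, which is precisely the mechanism by which $d_{Th}$, $d_{Teich}$, and the symmetric metric split apart for $n\geq3$. Your geodesic computation ($X_t\leftrightarrow ie^t$, giving $d_{Th}=\tfrac12 d_{hyp}$) is also correct and consistent with the paper's $\tfrac{1}{2\sqrt2}$ factor relative to the trace-form normalization; the corollary's phrase "equal to" is, as both proofs make clear, really "equal up to scaling."
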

\begin{proof}
The distance formula for the Riemannian symmetric metric on $\SL(n,\R)/\SO(n)\cong\mathcal{P}_n$ is given by (see e.g. \cite{terrasvol2}, Theorem 1.1.1):
$$
d(X,Y) = \sqrt{\sum_i(\log\lambda_i)^2}
$$
where the sum is over the eigenvalues of $XY^{-1}$. In the case of $2\times 2$ matrices of determinant one, there are precisely two eigenvalues whose product is 1. Write the eigenvalue with absolute value at least 1 as $\lambda$. Then the formula becomes:
$$
d(X,Y) = \sqrt{(\log\lambda)^2 + (\log1/\lambda)^2} = \sqrt{2\log(\lambda)^2} = \sqrt{2}|\log\lambda|.
$$
But $\lambda$ is also the maximum eigenvalue of $XY^{-1}$, so up to a choice of scaling, these are the same metrics. 
\end{proof}

\begin{rem}
A proof of Corollary \ref{dim2thurston} is obtained in the unpublished work \cite{greenfield} by the present authors using an explicit computation of the Lipschitz distortion in a realization of the fundamental domains as parallelograms in $\C$. 
\end{rem}

\begin{rem}
Another proof of Corollary \ref{dim2thurston} is possible using work of Belkhirat-Papadopoulos-Troyanov \cite{bpt}, where the Thurston metric is defined on $\mathcal{T}(2)$, but $\mathcal{T}(2)$ is defined using a different normalization. A fixed curve is set to length 1 via the marking, as opposed to here, where we choose volume 1. Using the usual identification of $\mathcal{T}(2)\leftrightarrow\HH^2$, it is shown that the resulting Thurston metric, denoted here by $\hat\kappa$, can be computed by the following formula (\cite{bpt}, Theorem 3):
$$
\hat\kappa(\zeta,\zeta') = \log\sup_{\alpha\in\mathcal{S}}\bigg(\frac{\ell_{\zeta'}(\alpha)/\ell_{\zeta'}(\epsilon)}{\ell_{\zeta}(\alpha)/\ell_{\zeta}(\epsilon)}\bigg)=\log\bigg(\frac{|\zeta'-\overline{\zeta}|+|\zeta'-\zeta|}{|\zeta-\overline{\zeta}|}\bigg)
$$
where the supremum is over homotopy classes of closed curves, $\ell_{\zeta}(\alpha)$ is the length of $\alpha$ in the metric associated to $\zeta\in\HH^2$, and $\epsilon$ is the normalizing curve. In order to recover our $d_{Th}$, we normalize using $\sqrt{\Im\zeta}$, the volume. Using the identification $\mathcal{T}(2)\leftrightarrow\HH^2$ for $d_{Th}$, we obtain:
$$
d_{Th}(\zeta,\zeta') = \hat\kappa(\zeta,\zeta')+\log\bigg(\frac{\sqrt{\Im\zeta}}{\sqrt{\Im\zeta'}}\bigg)= \log\bigg(\frac{|\zeta'-\overline{\zeta}|+|\zeta'-\zeta|}{\sqrt{|\zeta-\overline{\zeta}||\zeta'-\overline{\zeta'}|}}\bigg) = \frac{1}{2}\log\bigg(\frac{|\zeta'-\overline{\zeta}|+|\zeta'-\zeta|}{|\zeta'-\overline{\zeta}|-|\zeta'-\zeta|}\bigg)
$$
where the last equality follows from Lemma 2 (an identity for complex numbers) from \cite{bpt}. This is exactly the Poincar\'e metric.
\end{rem}

Next, as in \cite{thurston}, we define another asymmetric metric, $\kappa$, on $\mathcal{T}(n)$. Let $\mathcal{S}(\mathbb{T}^n)$ denote the set of homotopy classes of essential closed curves on the $n$-torus. For $\alpha\in\mathcal{S}(\mathbb{T}^n)$ and $h$ a metric on $\mathbb{T}^n$, denote by $\ell_h(\alpha)$ the shortest length of any curve in the homotopy class $\alpha$. For the flat torus, while the curve realizing this length is not unique, the shortest length is well-defined. As above, let $[S,f],[S',f'] \in\mathcal{T}(n)$ with $h$ and $h'$ the corresponding unit-volume flat metrics on $\mathbb{T}^n$. Now, $\kappa$ is defined as: 
\begin{equation}
\label{kappadef}
\kappa([S,f],[S',f']) = \log \sup_{\alpha\in\mathcal{S}(T^n)}\bigg(\frac{l_{h'}(\alpha)}{l_{h}(\alpha)}\bigg)
\end{equation}

That is, $\kappa$ is a measure of the maximum stretch along a geodesic. As in \cite{thurston}, we show:

\begin{prop}
\label{kappalambdasame}
The two metrics $\kappa$ and $d_{Th}$ are equal on $\mathcal{T}(n)$.
\end{prop}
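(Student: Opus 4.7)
The plan is to establish both inequalities $\kappa \le d_{Th}$ and $d_{Th} \le \kappa$ separately.

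For $\kappa \le d_{Th}$, I would fix any Lipschitz homeomorphism $\phi : S \to S'$ in the class $[f'^{-1}\circ f]$ with Lipschitz constant $L = \mathcal{L}(\phi)$. For any $\alpha \in \mathcal{S}(\mathbb{T}^n)$, the image under $\phi$ of a geodesic representative of $\alpha$ on $S$ is a closed curve of length at most $L \cdot \ell_h(\alpha)$ in the same free homotopy class on $S'$, so $\ell_{h'}(\alpha) \le L \cdot \ell_h(\alpha)$. Taking the supremum over $\alpha$ and then logarithms yields $\kappa \le \log L$; infimizing over $\phi$ gives $\kappa \le d_{Th}$.

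For the reverse inequality I would combine Theorem \ref{lipschitzmetricfmla} with the geodesic length formula recorded in Section \ref{teichmullertori}. Writing $Y, X$ for the positive definite matrices realizing the metrics of $S, S'$ on the standard torus $\mathbb{T}^n = \R^n/\Z^n$, the homotopy classes of essential closed curves correspond to $v \in \Z^n\setminus\{0\}$, and the geodesic representative in class $v$ has length $\sqrt{v^T X v}$ in the $X$-metric (scalar multiples $kv$ yield only $k$-fold covers and the same length ratio). Therefore
\begin{equation*}
e^{2\kappa} \;=\; \sup_{v\in\Z^n\setminus\{0\}}\frac{v^T X v}{v^T Y v}.
\end{equation*}
Extended to all of $\R^n\setminus\{0\}$ this generalized Rayleigh quotient has supremum equal to the largest generalized eigenvalue of the pencil $(X,Y)$, i.e., the largest eigenvalue of $Y^{-1}X$, which by similarity coincides with the largest eigenvalue of $XY^{-1}$. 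By Theorem \ref{lipschitzmetricfmla} this value is $e^{2d_{Th}}$ (positive definiteness of $X$ and $Y$ forces the eigenvalues of $XY^{-1}$ to be positive, so the absolute values in the formula are superfluous here).

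The only nontrivial step is justifying that restricting the supremum to integer vectors does not decrease it. The Rayleigh quotient is continuous and scale-invariant on $\R^n\setminus\{0\}$, so it descends to a continuous function on the unit sphere in $\R^n$; since normalized integer vectors are dense in this sphere, continuity equates the integer and real suprema. This density argument is the principal (though routine) obstacle; everything else follows formally from the preceding sections, and combining the two inequalities gives $\kappa = d_{Th}$.
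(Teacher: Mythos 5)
Your proof is correct. For the easy direction $\kappa \le d_{Th}$ both you and the paper argue the same way (a Lipschitz map bounds length ratios of closed geodesics). For the reverse inequality, however, you take a genuinely more algebraic route than the paper. The paper works directly with the affine extremal map $\varphi$: it considers the line $L$ through the origin on which the maximal stretch is realized, and if $L$ contains no nonzero lattice point, it selects lattice points approaching $L$ whose corresponding closed geodesics have stretch factors tending to $\mathcal{L}(\varphi)$. You instead invoke the closed-form expression from Theorem~\ref{lipschitzmetricfmla} to reduce the claim to a Rayleigh-quotient identity: $e^{2\kappa}$ is the supremum of $(v^TXv)/(v^TYv)$ over $v\in\Z^n\setminus\{0\}$, while $e^{2d_{Th}}$ is the supremum of the same quotient over $\R^n\setminus\{0\}$, and the two coincide by continuity and density of rational directions on the sphere. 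The two arguments are mathematically equivalent — both ultimately rely on the extremal map being affine and on lattice directions being dense — but yours converts the geometric limiting process into a cleaner linear-algebra statement, at the (small) cost of depending on the formula of Theorem~\ref{lipschitzmetricfmla} rather than deriving what it needs directly from Proposition~\ref{extremalaffine}. Both buy you the same result; the paper's version is more self-contained while yours more transparently exhibits $\kappa$ as a generalized eigenvalue maximization.
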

\begin{proof}
It is immediate that $$\kappa([S,f],[S',f'])\leq d_{Th}([S,f],[S',f'])$$ for all $[S,f],[S',f']\in\mathcal{T}(n)$, since the latter involves a supremum over all geodesic segments rather than only closed geodesics. For the opposite inequality, we will utilize a geometric argument. Let $\varphi:\R^n\to\R^n$ be the (lift of the) affine marking-preserving map between $S$ and $S'$. 

There exists a line $L$ containing the origin along which the maximal stretch of $\varphi$ is realized. If there are two lattice points on $L$, then the segment connecting them descends to a geodesic whose length is stretched by the Lipschitz constant, yielding $\kappa\geq d_{Th}$, and we are done. 

Suppose now 0 is the only lattice point on $L$. One can find a sequence of lattice points $p_n\in\Lambda$, $n=1,2,\ldots$ which approach $L$. By continuity, under $\varphi$ the corresponding sequence of closed geodesics will have stretch factors approaching the Lipschitz constant of the map $\varphi$. After taking the supremum of the stretches, we conclude $\kappa\geq d_{Th}$, as required. 
\end{proof}

\subsection*{The Finsler structure of the Thurston metric}

Finsler metrics are important in classical Teichm\"uller theory since both the Teichm\"uller metric and Thurston metric are Finsler but not Riemannian. Here, we will give a formula for the Finsler metric on $\mathcal{T}(n)$ associated to the Thurston metric $d_{Th}$. 

\begin{defn}
A \emph{Finsler metric} on a manifold M is a continuous function $$F:TM\to[0,\infty)$$ on the tangent bundle such that for each $p\in M$, the restriction $F|_{T_pM}:T_pM\to[0,\infty)$ is a norm (i.e. positive-definite, subadditive, linear under scaling by \emph{positive} scalars). 
\end{defn}

Our formula for the Finsler metric for $d_{Th}$ is very similar to the Finsler metric discussed in \cite{hilbertmetric} Theorem 3 (see also Section \ref{hilbertmetricsection} of this paper). Recall first that the tangent space of $\mathcal{T}(n)=\SL(n,\R)/\SO(n)$ at the identity is identified with the space of traceless symmetric matrices. One obtains any other tangent space by left translation via elements of $\SL(n,\R)$.

\begin{prop}
\label{finslermetricformula}
The Finsler structure on the tangent space at $Z\in\mathcal{T}(n)$ for the Thurston metric $d_{Th}$ is given by
$$
|X|_{Th(Z)} = \frac{1}{2}\max\{\lambda : \lambda\text{ is an eigenvalue of }XZ^{-1}\}
$$
where $X\in T_Z\mathcal{T}(n) \cong \fsl(n,\R)$. 
\end{prop}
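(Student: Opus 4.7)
The plan is to identify the Finsler norm with the infinitesimal version of the distance function and then use the explicit formula from Theorem \ref{lipschitzmetricfmla}. Concretely, for any smooth curve $\gamma:(-\epsilon,\epsilon)\to\mathcal{T}(n)$ with $\gamma(0)=Z$ and $\gamma'(0)=X$, I expect
$$|X|_{Th(Z)} = \lim_{t\to 0^+}\frac{d_{Th}(Z,\gamma(t))}{t}.$$
Since $\mathcal{T}(n)$ sits in $\mathcal{P}_n$ as the determinant-one locus, the tangent space at $Z$ is the space of symmetric matrices $X$ satisfying $\tr(Z^{-1}X)=0$; at $Z=I$ this recovers the identification $T_I\mathcal{T}(n)\cong\fsl(n,\R)\cap\{\text{symmetric}\}$, and for general $Z$ we transport along the $\SL(n,\R)$-action.

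The central computation is then a first-order expansion. I would write $\gamma(t)Z^{-1} = I + tXZ^{-1} + O(t^2)$ and observe that $XZ^{-1}$ is similar to the symmetric matrix $Z^{-1/2}XZ^{-1/2}$, so its eigenvalues $\mu_1,\dots,\mu_n$ are real; moreover $\sum_i\mu_i = \tr(XZ^{-1}) = 0$ by the tangent space condition. A standard perturbation argument (using that $Z^{-1/2}\gamma(t)Z^{-1/2}$ is a smooth family of symmetric matrices) gives that the eigenvalues of $\gamma(t)Z^{-1}$ are $1+t\mu_i+O(t^2)$, and in particular positive for small $t>0$. Plugging into Theorem \ref{lipschitzmetricfmla},
$$d_{Th}(Z,\gamma(t)) = \tfrac{1}{2}\max_i\log\bigl(1+t\mu_i+O(t^2)\bigr) = \tfrac{t}{2}\max_i\mu_i + O(t^2),$$
and dividing by $t$ and letting $t\to 0^+$ gives the claimed formula. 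The max-over-a-finite-set commutes with the limit without any subtlety.

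Finally I would check that the prescription
$$|X|_{Th(Z)} := \tfrac{1}{2}\max\{\lambda:\lambda\text{ an eigenvalue of }XZ^{-1}\}$$
really is a Finsler norm in the sense of the definition given: positive homogeneity under positive scalars is immediate from the identity $(\alpha X)Z^{-1}=\alpha(XZ^{-1})$; subadditivity follows because $XZ^{-1}$ is similar to the symmetric matrix $A_X:=Z^{-1/2}XZ^{-1/2}$, so the largest eigenvalue equals the supremum of the Rayleigh quotient $\langle A_X v,v\rangle/\langle v,v\rangle$, which is subadditive in $X$; and positive definiteness on nonzero tangent vectors holds because the eigenvalues of $XZ^{-1}$ are real with zero sum, so if $X\neq 0$ not all of them vanish and $\max_i\mu_i>0$.

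The main obstacle I anticipate is a notational rather than mathematical one: carefully tracking that the correct version of the Finsler norm for an asymmetric Finsler metric is the \emph{right} derivative $\lim_{t\to 0^+} d_{Th}(Z,\gamma(t))/t$, so that the absolute values in the formula of Theorem \ref{lipschitzmetricfmla} disappear (because each eigenvalue $1+t\mu_i$ of $\gamma(t)Z^{-1}$ is positive for small $t>0$). This is what forces the Finsler norm to be only positively (rather than absolutely) homogeneous, consistent with $d_{Th}$ being asymmetric for $n\geq 3$.
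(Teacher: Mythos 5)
Your computation of the right derivative $\lim_{t\to 0^+}d_{Th}(Z,\gamma(t))/t$ is correct, including the care taken with the sign (so the absolute values from Theorem \ref{lipschitzmetricfmla} drop out for small $t>0$), the similarity $XZ^{-1}\sim Z^{-1/2}XZ^{-1/2}$, and the perturbation-theoretic step; the checks that the resulting expression is a norm are also sound. However, there is a genuine gap between what you prove and what the proposition (and the paper's proof) asserts. Identifying the putative Finsler norm with the infinitesimal distance establishes only this: \emph{if} $d_{Th}$ is the length metric induced by some Finsler norm, then that norm must be the one claimed. It does not establish that $d_{Th}$ actually \emph{is} the length metric of $|\cdot|_{Th}$, i.e.\ that
$$
d_{Th}(Z,W)=\inf_{\gamma}\int_0^1|\gamma'(t)|_{Th(\gamma(t))}\,dt .
$$
Your derivative computation, together with a Riemann-sum/triangle-inequality step you do not spell out, yields one inequality, $d_{Th}(Z,W)\leq\int_0^1|\gamma'(t)|_{Th(\gamma(t))}dt$ for every path $\gamma$; the reverse inequality — that some path achieves this — is missing entirely. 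In particular the corollary immediately following the proposition (that $t\mapsto e^{tX}V$ is a $d_{Th}$-geodesic) does not follow from your argument.

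The paper's route is genuinely different and supplies exactly this missing piece. Rather than differentiating the distance function, it writes $d_{Th}(I,A)=\tfrac12\sup_{v\neq0}\log(\langle Av,v\rangle/\langle v,v\rangle)$, expresses the $\log$ of the Rayleigh quotient as $\int_0^1\tfrac{d}{dt}\log\langle\gamma(t)v,v\rangle\,dt$ along an arbitrary path from $I$ to $A$, pushes the $\sup_v$ inside the integral to bound $d_{Th}$ above by the Finsler length of any path, and then verifies that the exponential path $\gamma(t)=e^{tX}$ with $e^X=A$ has Finsler length exactly $d_{Th}(I,A)$ (reducing to $Z=I$ by $\SL(n,\R)$-invariance). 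To repair your proof you would need to add precisely that second half: exhibit the exponential path from $Z$ to $W$ and check its Finsler length equals $d_{Th}(Z,W)$, thereby proving that $d_{Th}$ is the Finsler length metric rather than merely computing its infinitesimal form.
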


\begin{proof}
It suffices to show the case of $Z=I$. First, note that this is always non-negative since the trace is zero, and defines a norm. Let $\gamma:[0,1]\to\SL(n,\R)/\SO(n)$ be a smooth path from $I$ to $A$. Since $A$ is symmetric, its operator norm coincides with the maximum eigenvalue, and so
$$
d_{Th}(I,A) = \frac{1}{2}\sup_{0\neq v\in\R^n}\log\frac{\langle\gamma(1)v,v\rangle}{\langle v,v\rangle}
$$
is the maximum eigenvalue. We then compute:
$$ 
d_{Th}(I,A)  =  \frac{1}{2}\sup_{0\neq v\in\R^n} \int_0^1\frac{d}{dt}\log\langle\gamma(t)v,v\rangle dt
= \frac{1}{2}\sup_{0\neq v\in\R^n}\int_0^1\frac{\langle \gamma'(t)v,v\rangle}{\langle\gamma(t)v,v\rangle}dt$$ $$\leq \frac{1}{2}\int_0^1\sup_{0\neq v\in\R^n}\frac{\langle \gamma'(t)v,v\rangle}{\langle \gamma(t)v,v\rangle}dt
= \frac{1}{2}\int_0^1|\gamma'(t)|_{Th(\gamma(t))}dt
$$
where the final equality follows because the supremum on the left-hand side yields the operator norm, which matches the Finsler norm inside the integral on the right-hand side. This is the Finsler length of $\gamma$. Thus $d_{Th}$ is bounded above by the Finsler distance of any path. 

Next, choose $X$ such that $e^X=A$, which exists because $A\in \mathcal{P}_n$. The Finsler length of the path $\gamma(t)=e^{tX}$ for $t\in[0,1]$ is computed as follows:
$$
\ell(\gamma) = \frac{1}{2}\int_0^1\sup_{0\neq v\in\R^n}\frac{\langle Xe^{tX}v,v\rangle}{\langle e^{tX}v,v\rangle}dt = \frac{1}{2}\int_0^1\sup_{0\neq v\in\R^n}\frac{d}{dt}\log\langle e^{tX}v,v\rangle$$ $$ = \frac{1}{2}\sup_{0\neq v\in \R^n}\frac{\langle Av,v\rangle}{\langle v,v\rangle} = d_{Th}(I,A)
$$
Thus the Thurston distance is realized the Finsler length of a path, as desired. 
\end{proof}

\begin{cor}
For $U,V\in\SL(n,\R)/\SO(n)$, if $e^X = UV^{-1}$, the path given by $t\mapsto e^{tX}V$ for $t\in[0,1]$ is a geodesic path from $V$ to $U$ with respect to $d_{Th}$. 
\end{cor}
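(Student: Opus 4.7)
The plan is to reduce the statement to the special base-point case already established in Proposition \ref{finslermetricformula} by exploiting the $\SL(n,\R)$-invariance of $d_{Th}$.

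First, I would invoke the corollary following Theorem \ref{lipschitzmetricfmla}: the group $\SL(n,\R)$ acts by $d_{Th}$-isometries on $\mathcal{T}(n)=\mathcal{P}_n$ via $g\cdot W = gWg^T$. Choose the element $V^{-1/2}\in\SL(n,\R)$ (after rescaling by a positive scalar; but since we are working projectively via $\SL(n,\R)$-invariance, one simply normalizes). Under this action, $V$ is sent to $I$ and $U$ is sent to $A:=V^{-1/2}UV^{-1/2}\in\mathcal{P}_n$.

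Next, apply the base-point case. The proof of Proposition \ref{finslermetricformula} actually establishes that for $A=e^Y$ with $Y$ symmetric, the path $\gamma_0(t)=e^{tY}$, $t\in[0,1]$, has Finsler length equal to $d_{Th}(I,A)$; hence $\gamma_0$ is a geodesic from $I$ to $A$. Since $A\in\mathcal{P}_n$, a symmetric logarithm $Y=\log A$ exists and is unique, so this path is well defined.

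Now I would translate the geodesic $\gamma_0$ back by the inverse isometry $W\mapsto V^{1/2}WV^{1/2}$. Under this translation, $\gamma_0$ becomes the path
\[
\gamma(t)\;=\;V^{1/2}e^{tY}V^{1/2},\qquad t\in[0,1],
\]
which is automatically a geodesic from $V$ to $U$ in the $d_{Th}$ metric. It remains to identify $\gamma$ with the path $t\mapsto e^{tX}V$ in the statement. Setting $X:=V^{1/2}YV^{-1/2}$, we have $e^{tX}=V^{1/2}e^{tY}V^{-1/2}$, so $e^{tX}V=V^{1/2}e^{tY}V^{1/2}=\gamma(t)$. Finally, $e^{X}V=V^{1/2}AV^{1/2}=U$, giving $e^{X}=UV^{-1}$, as required by the hypothesis.

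The main conceptual obstacle is a bookkeeping issue rather than a mathematical one: verifying that the seemingly asymmetric expression $e^{tX}V$ actually produces a curve in $\mathcal{P}_n$ (i.e., symmetric and positive definite), since neither $X$ nor $e^{tX}$ need be symmetric individually. The conjugation identity $e^{tX}V = V^{1/2}e^{tY}V^{1/2}$ resolves this and simultaneously shows why the choice $X=\log(UV^{-1})$ is the correct logarithm to take (the unique one conjugate via $V^{1/2}$ to the symmetric $Y=\log A$). Once this identification is in place, the geodesic property is an immediate consequence of invariance plus Proposition \ref{finslermetricformula}.
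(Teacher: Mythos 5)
Your proof is correct, and it is the natural argument that the paper leaves implicit: translate the base-point case established inside the proof of Proposition \ref{finslermetricformula} (that $t\mapsto e^{tY}$ is geodesic from $I$ to $A=e^{Y}$, $Y$ symmetric) by the isometry $W\mapsto V^{1/2}WV^{1/2}$, then verify the identification $e^{tX}V=V^{1/2}e^{tY}V^{1/2}$ with $X=V^{1/2}YV^{-1/2}$. One small remark: the parenthetical about rescaling $V^{-1/2}$ is unnecessary, since $\det V=1$ already forces $V^{\pm1/2}\in\SL(n,\R)$; and the observation that $X$ must be the $V^{1/2}$-conjugate of the unique symmetric logarithm of $V^{-1/2}UV^{-1/2}$ (so that the path lands in $\mathcal{P}_n$) is a worthwhile clarification of the hypothesis ``$e^X=UV^{-1}$'' that the corollary states tersely.
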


\section{Teichm\"uller metric for higher-dimensional tori}
\label{teichhidim}

Here, we utilize quasiconformal maps for $\R^n$ from \cite{kqc} to define the Teichm\"uller metric on $\mathcal{T}(n)$ for $n\geq2$ and explore its properties.

\subsection{Definitions and useful facts on quasiconformal maps}

We will first state as concisely as possible the definition of $K$-quasiconformal maps between domains $D$ and $D'$ in $\R^n$ in the case of diffeomorphisms from Chapter 4 of \cite{kqc}. 

For a linear map $T:\R^n\to \R^m$, define the following:
$$
L(T) = \max_{|x|=1}|T(x)|,\ \ \ \ell(T)=\min_{|x|=1}|T(x)|.
$$
These are the maximal and minimal stretching of $T$, respectively. 

\begin{defn}
\label{kqcdefn}
Let $f:D\to D'$ be a diffeomorphism of domains in $\R^n$. Define the \emph{inner}, \emph{outer}, and \emph{maximal dilatations} respectively as follows:
$$
K_I(f) =\sup_{x\in D}\frac{|J_f(x)|}{\ell(f'(x))^n}
$$
$$
K_O(f) =\sup_{x\in D}\frac{L(f'(x))^n}{|J_f(x)|}
$$
$$
K(f) = \max(K_I(f),K_O(f))
$$
where $f'(x)$ is the total derivative of $f$ at $x\in D$ and $J_f$ is the Jacobian. The map $f$ is said to be $K$-quasiconformal if $K(f)\leq K<\infty$. 
\end{defn}

The above definition is local, so it applies immediately to flat tori by lifting any map to its universal cover. 

Next, we list a few basic properties of quasiconformal maps which will be essential to the definition of the Teichm\"uller metric and are direct analogs of the 2-dimensional case. These come from Lemma 6.1.1 and Theorem 6.8.4 of \cite{kqc}:
\begin{prop}
\label{kqcprops}
Let $f:D\to D'$ and $g:D'\to D''$ be quasiconformal homeomorphisms of domains in $\R^n$. Then the following hold:
\begin{enumerate}
\item $K(g\circ f)\leq K(g)K(f)$
\item $K(f)\geq1$ with equality if and only if $f$ is a M\"obius transformation
\item $K(f^{-1})=K(f)$
\end{enumerate}
\end{prop}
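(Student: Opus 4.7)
The three claims are all purely local once we note that quasiconformality is defined pointwise through the dilatations of the derivative. So the plan is to prove each by reducing to a statement about a single linear map $T:\R^n\to\R^n$, and then taking suprema over $x$.

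For (1), I would apply the chain rule $(g\circ f)'(x)=g'(f(x))\circ f'(x)$ together with the multiplicativity of the Jacobian $J_{g\circ f}(x)=J_g(f(x))\,J_f(x)$. Combined with the elementary linear-algebra facts $L(AB)\leq L(A)L(B)$ and $\ell(AB)\geq \ell(A)\ell(B)$ (both immediate from the definitions), this gives
\[
\frac{L((g\circ f)'(x))^n}{|J_{g\circ f}(x)|}\leq \frac{L(g'(f(x)))^n}{|J_g(f(x))|}\cdot\frac{L(f'(x))^n}{|J_f(x)|},
\]
and similarly for the inner dilatation. Taking suprema yields $K_O(g\circ f)\leq K_O(g)K_O(f)$ and $K_I(g\circ f)\leq K_I(g)K_I(f)$, from which $K(g\circ f)\leq K(g)K(f)$ follows.

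For (3), I would invoke the inverse function theorem to write $(f^{-1})'(y)=[f'(f^{-1}(y))]^{-1}$ and $J_{f^{-1}}(y)=1/J_f(f^{-1}(y))$. The key linear-algebra observation is that for an invertible linear $T$, $L(T^{-1})=1/\ell(T)$ and $\ell(T^{-1})=1/L(T)$. Substituting into the definitions gives $K_I(f^{-1})=K_O(f)$ and $K_O(f^{-1})=K_I(f)$, so the maxima agree and $K(f^{-1})=K(f)$.

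For (2), the inequality $K(f)\geq 1$ is elementary: any invertible linear $T:\R^n\to\R^n$ satisfies $\ell(T)^n\leq |\det T|\leq L(T)^n$, since $|\det T|$ is the volume of the image of the unit cube under $T$, sandwiched between the volumes of the inscribed and circumscribed balls. Hence $K_I(f),K_O(f)\geq 1$ pointwise. The equality case is the only non-routine part: if $K(f)=1$ then $L(f'(x))=\ell(f'(x))$ for all $x$, so $f'(x)$ is a positive scalar multiple of an orthogonal matrix at every point, i.e.\ $f$ is conformal in the metric sense. The conclusion that such an $f$ must be a Möbius transformation is the classical Liouville rigidity theorem (valid for $n\geq 3$; in $n=2$ one identifies the Möbius class appropriately), and this is the only step I would simply cite rather than prove.

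The main obstacle is the rigidity statement in (2): producing a Möbius map from pointwise conformality is substantive content (Liouville's theorem), whereas the rest of the proposition is a direct translation of the chain rule and inverse function theorem through Definition \ref{kqcdefn}. Since the proposition is explicitly attributed to Lemma 6.1.1 and Theorem 6.8.4 of \cite{kqc}, I would state (1) and (3) with brief self-contained proofs along the lines above and cite \cite{kqc} for the Liouville-type equality in (2).
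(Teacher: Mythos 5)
The paper gives no proof of this proposition at all: it simply states that the three items ``come from Lemma 6.1.1 and Theorem 6.8.4 of \cite{kqc}'' and moves on. Your blind attempt therefore does strictly more than the paper, and your arguments are correct. The chain-rule computation for (1), using $L(AB)\leq L(A)L(B)$ and $\ell(AB)\geq \ell(A)\ell(B)$ together with multiplicativity of the Jacobian, is exactly the standard proof; likewise the inverse-function-theorem argument for (3), using $L(T^{-1})=1/\ell(T)$ and $\ell(T^{-1})=1/L(T)$ to swap $K_I$ and $K_O$. For (2), the inequality via $\ell(T)^n\leq|\det T|\leq L(T)^n$ is right, and citing Liouville's rigidity theorem for the equality case is the appropriate move --- that is genuine analytic content one should not attempt to rederive in a few lines. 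Two cosmetic remarks: the sandwich $\ell(T)^n\leq|\det T|\leq L(T)^n$ is most transparently seen via singular values (the image of the unit \emph{ball} is an ellipsoid with semi-axes the singular values, of which $\ell(T)$ and $L(T)$ are the extremes), rather than via the unit cube as you phrase it; and you are right to flag that the Liouville-type conclusion in (2) is a theorem only for $n\geq 3$, since in the plane a $1$-quasiconformal map between general domains is merely conformal, not M\"obius. The paper's downstream use of (2) is for maps lifting to all of $\R^n$, where the $n=2$ case also holds, so this does not cause trouble, but it is a subtlety worth the caveat you included.
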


We will need one more property of quasiconformal maps in order to prove that the extremal quasiconformal constant is realized by the affine map. This is a very special case of Theorem 6.6.18 in \cite{kqc}. 

\begin{prop}
\label{kqcconverge}
Let $(f_k)_{k\in\N}:\R^n\to\R^n$ be a sequence of $K$-quasiconformal homeomorphisms. Suppose $f_k\to f$ locally uniformly. Then $f:\R^n\to\R^n$ is a $K$-quasiconformal homeomorphism as well. 
\end{prop}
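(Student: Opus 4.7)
\textbf{Plan for Proposition \ref{kqcconverge}.} The statement is cited from \cite{kqc}, so I would not attempt a full proof, but I would sketch an approach that tracks the two separate issues the statement encodes: that $f$ is a \emph{homeomorphism} at all (not merely a continuous locally uniform limit), and that it inherits the quantitative bound $K(f)\le K$. Since the limit is not assumed to be a diffeomorphism, I would work with an equivalent characterization of $K$-quasiconformality that is intrinsic to homeomorphisms; the natural choice is the geometric (modulus) definition:
\[
\tfrac{1}{K}\,\mathrm{mod}(\Gamma)\le \mathrm{mod}(g(\Gamma))\le K\,\mathrm{mod}(\Gamma)
\]
for every path family $\Gamma$ in the source domain. (Alternatively, one can use the metric definition via the linear dilatation $H_f(x)$; both are equivalent to Definition \ref{kqcdefn} for diffeomorphisms by standard theorems in \cite{kqc}.)

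First I would show that $f$ is a homeomorphism of $\R^n$. The family $\{f_k\}$ is equicontinuous on compact sets because uniformly $K$-quasiconformal maps satisfy a uniform H\"older estimate, so by Arzel\`a--Ascoli any locally uniform limit exists and is continuous. The delicate point is that such limits could in principle collapse (for instance $f_k(x)=x/k$), so one must rule out degeneration. The standard way is to apply a normal-families result for quasiconformal maps: after suitable normalization, any subsequential limit of uniformly $K$-quasiconformal homeomorphisms is either constant or a $K$-quasiconformal homeomorphism, and the former is excluded by modulus estimates forcing the images of fixed spherical rings to remain nondegenerate. Because $f_k\to f$ is given, the limit is the homeomorphism produced by this dichotomy.

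Second, I would pass the dilatation bound to the limit. Since modulus of path families is lower semicontinuous under locally uniform convergence of homeomorphisms (a standard fact used heavily in \cite{kqc}), the inequality $\mathrm{mod}(f_k(\Gamma))\le K\,\mathrm{mod}(\Gamma)$ survives as $\mathrm{mod}(f(\Gamma))\le K\,\mathrm{mod}(\Gamma)$. Applying the same argument to the inverses $f_k^{-1}$, which are also $K$-quasiconformal by Proposition \ref{kqcprops}(3) and converge locally uniformly to $f^{-1}$ once injectivity of $f$ is known, yields the other modulus inequality. Hence $f$ is $K$-quasiconformal in the geometric sense, which is equivalent to the definition used in the paper.

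The main obstacle in this outline is the first step: excluding degeneration of the limit. This is exactly the content that would fail without the uniform dilatation bound, and it is where all the real analytic work lives. The second step is then essentially a soft compactness argument given the lower semicontinuity of modulus. Since the full technical verification of both points is carried out in \cite[Thm.~6.6.18]{kqc}, I would cite that result rather than reprove the moduli estimates here.
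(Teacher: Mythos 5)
The paper does not prove this proposition: the sentence preceding it simply cites it as ``a very special case of Theorem 6.6.18 in \cite{kqc}.'' At that level your proposal matches the paper exactly, since you also defer the real work to \cite{kqc}. But the sketch you wrap around the citation contains a genuine gap that is worth flagging, because the proposition \emph{as stated in the paper} is itself imprecise, and your sketch papers over exactly the point at which it is imprecise.

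You correctly observe that the delicate step is ruling out degeneration of the limit, and you even produce the right counterexample, $f_k(x)=x/k$. But you then assert that ``the former is excluded by modulus estimates forcing the images of fixed spherical rings to remain nondegenerate.'' That is false, and your own example shows why: each $f_k(x)=x/k$ is conformal, so the image of the spherical ring $A(1,2)$ is the ring $A(1/k,2/k)$, which has \emph{exactly the same} conformal modulus as $A(1,2)$ and nonetheless collapses to a point. Modulus bounds constrain the \emph{shape} of image rings, not their \emph{size}; they cannot by themselves prevent the limit from being constant. What actually excludes degeneration is a normalization of the sequence at (at least) two points with a fixed separation, or equivalently the a priori knowledge that the limit $f$ is nonconstant. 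The proposition as printed has no such hypothesis, so it is literally false; the correct conclusion of the cited theorem is a dichotomy (the limit is either a $K$-quasiconformal homeomorphism or degenerate). The reason the paper's use of the proposition is nevertheless sound is that in its only application, Proposition \ref{kqcextremal}, the locally uniform limit of $g_k(x)=g(kx)/k$ is explicitly identified (via Lemma \ref{uniformconv}) as the affine map $w$, which is manifestly a homeomorphism; so the degenerate branch of the dichotomy simply does not arise. Your sketch should either add the nondegeneracy hypothesis to the statement, or note that it is verified in the application, rather than claim that modulus estimates rule out collapse in general.

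One smaller point: you pass to $f_k^{-1}\to f^{-1}$ ``once injectivity of $f$ is known,'' but locally uniform convergence of $f_k\to f$ does not automatically give locally uniform convergence of the inverses; one needs an additional argument (properness plus equicontinuity of the inverse family, again from the uniform $K$-bound). This too is handled inside \cite[Thm.~6.6.18]{kqc}, but it is not a free consequence of injectivity of the limit.
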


We now prove the quasiconformal analog of Proposition \ref{extremalaffine}.

\begin{prop}
\label{kqcextremal}
The extremal quasiconformal constant for a homeomorphism between two flat $n$-tori in a specified homotopy class is given by the unique affine map.
\end{prop}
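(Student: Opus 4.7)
The plan is to mirror the proof of Proposition \ref{extremalaffine} almost verbatim, substituting the quasiconformal dilatation $K(\cdot)$ for the Lipschitz constant $\mathcal{L}(\cdot)$ and using Propositions \ref{kqcprops} and \ref{kqcconverge} in place of the elementary facts about Lipschitz maps. The uniqueness of the affine lift is already established in Proposition \ref{extremalaffine}, so only the extremality part remains.

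First I would set up the same class $\mathcal{F}$ of lifts as before: having fixed bases $\{\omega_i\}$ of $\Lambda$ and $\{\zeta_i\}$ of $\Lambda'$, let $\mathcal{F}$ consist of homeomorphisms $g:\R^n\to\R^n$ with $g(0)=0$ and satisfying the equivariance relation (\ref{gclass}). As in Proposition \ref{extremalaffine}, $\mathcal{F}$ is exactly the set of such lifts of homeomorphisms $S\to S'$ homotopic to $f'^{-1}\circ f$, and contains a unique affine map $w$.

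Next I would run the same rescaling trick. Given any $K$-quasiconformal $g\in\mathcal{F}$, define $g_k(x)=g(kx)/k$ for $k=1,2,\ldots$. A direct check shows $g_k\in\mathcal{F}$. To see $g_k$ is $K$-quasiconformal, write $g_k=m_{1/k}\circ g\circ m_k$ where $m_c(x)=cx$; each $m_c$ is a M\"obius transformation, hence $1$-quasiconformal by Proposition \ref{kqcprops}(2), so Proposition \ref{kqcprops}(1) gives $K(g_k)\leq K(m_{1/k})\,K(g)\,K(m_k)=K$. By Lemma \ref{uniformconv} (whose proof used only the equivariance property and so applies to $g$ here without change) the sequence $g_k$ converges uniformly, and in particular locally uniformly, to $w$. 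Proposition \ref{kqcconverge} then forces $w$ to be $K$-quasiconformal. Since $K\geq K(g)$ was arbitrary over $g\in\mathcal{F}$, we conclude $K(w)\leq \inf_{g\in\mathcal{F}}K(g)$, so $w$ realizes the extremal dilatation.

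I do not anticipate any real obstacle: all of the pieces needed to adapt the Lipschitz argument are already packaged as Propositions \ref{kqcprops} and \ref{kqcconverge}. The only minor subtlety is that Definition \ref{kqcdefn} is phrased for diffeomorphisms whereas Proposition \ref{kqcconverge} concerns homeomorphisms, but because the limit $w$ is affine (hence a diffeomorphism) the conclusion agrees with the diffeomorphism definition used to state the proposition. The mildly delicate step, if one had to name one, is the verification that $g_k\in\mathcal{F}$, which is a one-line computation using (\ref{gclass}) but is the place where the homotopy-class hypothesis is actually used.
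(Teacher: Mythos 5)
Your proposal is correct and follows essentially the same route as the paper's proof: same class $\mathcal{F}$, same rescaling $g_k(x)=g(kx)/k$, same observation that the $g_k$ remain $K$-quasiconformal because dilations are conformal, same appeal to Lemma \ref{uniformconv} for uniform convergence to the affine map, and the same use of Proposition \ref{kqcconverge} to conclude. Your explicit bookkeeping via Proposition \ref{kqcprops}(1)--(2) and the remark about the diffeomorphism-versus-homeomorphism hypotheses are slightly more careful than the paper's terser wording, but there is no substantive difference in approach.
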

\begin{proof}
Recall the proof of Proposition \ref{extremalaffine}; in particular, recall the collection $\mathcal{F}$ of homeomorphisms $g:\R^n\to\R^n$ such that 
$$
g(0)=0,\ g(x+\sum_i^n m_i\omega_i) = g(x) + \sum_i^nm_i\zeta_i
$$
for all $x\in\R^n$. This is precisely the collection of lifts of marking-preserving homeomorphisms. Let $g\in\mathcal{F}$ be $K$-quasiconformal, and define $g_k(x)=g(kx)/k$ for $k=1,2,\ldots$. The maps $g_k$ are also $K$-quasiconformal since they are built from $g$ by pre- and post-composition with dilations. Further $g_k\in\mathcal{F}$, and the sequence of maps uniformly converges to the affine map. By Proposition \ref{kqcconverge}, the affine map has dilatation at most $K$. This holds for all $g\in\mathcal{F}$, so the result follows. 
\end{proof}

We are now ready to define the Teichm\"uller metric. 

\begin{defn} 
Let $[S,f],[S',f']\in\mathcal{T}(n)$. The \emph{Teichm\"uller metric} on $\mathcal{T}(n)$ is defined as:
$$
d_{Teich}([S,f],[S',f']) = \frac{1}{2n}\log\inf_{g\in[f'^{-1}\circ f]}K(g)
$$
where the infimum is taken over quasiconformal maps homotopic to $f'^{-1}\circ f$.
\end{defn}

\begin{prop}
The function $d_{Teich}$ above is a metric.
\end{prop}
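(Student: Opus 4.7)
The plan is to check the standard metric axioms in turn: non-negativity, definiteness, symmetry, and the triangle inequality. Non-negativity, symmetry, and the triangle inequality will fall out essentially formally from the properties of the dilatation $K(\cdot)$ listed in Proposition \ref{kqcprops}, while the nontrivial step is to see why vanishing distance forces the two marked tori to be equivalent; this is where I expect to spend the real effort.

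The formal axioms go as follows. Non-negativity is immediate: by Proposition \ref{kqcprops}(2), $K(g)\geq 1$ for every quasiconformal $g$, so the infimum of $\log K(g)$ is at least $0$. For symmetry, note that $g\in[f'^{-1}\circ f]$ is equivalent to $g^{-1}\in[f^{-1}\circ f']$, and Proposition \ref{kqcprops}(3) gives $K(g^{-1})=K(g)$; hence the two infima defining $d_{Teich}([S,f],[S',f'])$ and $d_{Teich}([S',f'],[S,f])$ agree. For the triangle inequality, any $\phi\in[f'^{-1}\circ f]$ and $\psi\in[f''^{-1}\circ f']$ compose to $\psi\circ\phi\in[f''^{-1}\circ f]$, and Proposition \ref{kqcprops}(1) gives $K(\psi\circ\phi)\leq K(\psi)K(\phi)$; taking logarithms, dividing by $2n$, and taking infima over $\phi$ and $\psi$ separately yields the desired inequality.

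The key step is definiteness. Suppose $d_{Teich}([S,f],[S',f'])=0$. By Proposition \ref{kqcextremal}, the infimum in the definition is attained by the unique affine map, whose linear part, in the setup of Proposition \ref{extremalaffine}, is a matrix $A\in\SL(n,\R)$ (the quotient $gh^{-1}$ of $\SL(n,\R)$ representatives of the two cosets). Thus $K(A)=1$, and by Proposition \ref{kqcprops}(2) the map $A$ is Möbius. A linear Möbius transformation of $\R^n$ is a similarity, i.e., $A=cO$ with $c>0$ and $O$ orthogonal. Combining $\det A=1$, $c>0$, and orientation-preservation forces $c=1$ and $O\in\SO(n)$, so $A\in\SO(n)$, hence $g\SO(n)=h\SO(n)$ in $\SL(n,\R)/\SO(n)=\mathcal{T}(n)$, i.e., $[S,f]=[S',f']$. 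The mild subtlety here, and the place where the main obstacle lives, is that we need the unit-volume normalization ($\det A=1$) to rule out nontrivial dilations; without it, all one could conclude is that the two marked tori differ by a similarity rather than an isometry.
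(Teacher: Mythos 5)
Your proof is correct and follows essentially the same outline as the paper's: the non-negativity, symmetry, and triangle inequality axioms all come straight from Proposition \ref{kqcprops} exactly as you say, and definiteness is the one real step. Where you diverge is in how you execute that last step, and your route is in fact cleaner. The paper argues that the $1$-quasiconformal marking-preserving map is M\"obius, and then asserts that ``since it preserves the marking, it must be orientation-preserving and not include inversions in spheres,'' leaving the exclusion of dilations somewhat implicit (orientation-preservation rules out reflections, but not dilations, which are also inversion-free is false --- dilations are products of sphere inversions, so they are excluded, but the \emph{reason} they are excluded is volume, not marking). You instead invoke Proposition \ref{kqcextremal} to reduce at once to the linear extremal map $A$, observe that a linear M\"obius transformation of $\R^n$ is a similarity $cO$, and then use the unit-volume normalization $\det A = 1$ together with orientation-preservation to force $c=1$ and $O\in\SO(n)$. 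This makes explicit that it is the unit-covolume constraint, not the marking, that kills nontrivial dilations --- a point you rightly flag as the crux. Both proofs are valid, but yours is the more transparent of the two.
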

\begin{proof}
Proposition \ref{kqcprops} (1) and (3) give symmetry and the triangle inequality, and (2) shows $d_{Teich}\geq0$. Now suppose $d_{Teich}([S,f],[S',f'])=0$. Then there exists a 1-quasiconformal map $g:S\to S'$ preserving the marking. By Proposition \ref{kqcprops} (2), it must be a M\"obius transformation. Since it preserves the marking, it must be orientation-preserving and not include inversions in spheres. Thus it is generated by an even number of reflections over hyperplanes, so it is (the quotient of) an orientation-preserving isometry of $\R^n$. We conclude $[S,f]=[S',f']$. 
\end{proof}

Next, we exhibit a significant departure from Teichm\"uller spaces of hyperbolic surfaces. 

\begin{thm}
\label{symofthurst}
For all $[S,f],[S',f']\in\mathcal{T}(n)$, we have:
$$
d_{Teich}([S,f],[S',f'])= \max(d_{Th}([S,f],[S',f']),d_{Th}([S',f'],[S,f]))
$$
\end{thm}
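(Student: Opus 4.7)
The key observation is that, by Proposition \ref{extremalaffine} and Proposition \ref{kqcextremal}, both sides of the desired identity are realized by the \emph{same} map: the unique affine map between the two marked tori. So the plan is to lift both distances to a single linear map $A:\R^n\to\R^n$ (with $\det A = 1$ since both tori have unit volume), express each side in terms of the singular values $\sigma_1\geq\sigma_2\geq\cdots\geq\sigma_n>0$ of $A$, and compare.

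\textbf{Step 1: The two Thurston distances in terms of singular values.} Using Lemma \ref{operatornorm}, the Lipschitz constant of $A$ equals its operator norm $\sigma_1$, so
\[
d_{Th}([S,f],[S',f']) = \tfrac{1}{2}\log\sigma_1.
\]
By the same proposition applied to the inverse affine map $A^{-1}$, whose singular values are $1/\sigma_n\geq\cdots\geq 1/\sigma_1$, one gets
\[
d_{Th}([S',f'],[S,f]) = \tfrac{1}{2}\log(1/\sigma_n) = -\tfrac{1}{2}\log\sigma_n.
\]

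\textbf{Step 2: The Teichm\"uller distance in terms of singular values.} For the linear (hence everywhere differentiable) map $A$, Definition \ref{kqcdefn} gives
\[
L(A) = \sigma_1,\qquad \ell(A) = \sigma_n,\qquad |J_A| = \det A = 1,
\]
so
\[
K_O(A) = \sigma_1^n,\qquad K_I(A) = \sigma_n^{-n},\qquad K(A) = \max(\sigma_1^n, \sigma_n^{-n}).
\]
Hence
\[
d_{Teich}([S,f],[S',f']) = \tfrac{1}{2n}\log\max(\sigma_1^n,\sigma_n^{-n}) = \max\!\bigl(\tfrac{1}{2}\log\sigma_1,\ -\tfrac{1}{2}\log\sigma_n\bigr).
\]
Comparing with Step 1 gives exactly the claimed identity.

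\textbf{Remarks on obstacles.} The main content is really Proposition \ref{kqcextremal}, which is already available; beyond that the proof is essentially a singular-value computation, so there is no serious obstacle. The only subtlety worth checking is that the extremal Lipschitz map and the extremal quasiconformal map are identified with the \emph{same} affine representative in $\mathcal{F}$, which follows from the uniqueness of the affine map within $\mathcal{F}$ established in the proof of Proposition \ref{extremalaffine}. One may also remark, as a sanity check, that the factor $\tfrac{1}{2n}$ in the definition of $d_{Teich}$ is precisely what is needed to make the two sides agree: replacing $\tfrac{1}{2n}\log K(A)$ by $\tfrac{1}{2n}\log\max(\sigma_1^n,\sigma_n^{-n})$ distributes the exponent $n$ against the normalization.
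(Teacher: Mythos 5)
Your proposal is correct and follows essentially the same route as the paper: both reduce to the unique affine map (via Propositions \ref{extremalaffine} and \ref{kqcextremal}), note $|J_A|=1$ since the tori have unit volume, and then observe that $K(A)=\max(L(A)^n,\ell(A)^{-n})$ with $L(A)$ and $\ell(A)^{-1}$ being the Lipschitz constants of $A$ and $A^{-1}$; your singular-value notation just makes this computation explicit.
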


\begin{proof}
Recall from Corollary \ref{kqcextremal} that the extremal quasiconformal constant between two marked flat $n$-tori is realized by the unique affine map. The Jacobian of an affine map is equal to its determinant, which must be 1, since it must be volume-preserving. Definition \ref{kqcdefn} then gives  
$$
K(g) = \max\bigg(\sup_{x\in\R^n}L(g'(x))^n,\sup_{x\in\R^n}\frac{1}{\ell(g'(x))^n}\bigg).
$$
But $g$ is affine, so $L(g'(x))$ is the Lipschitz constant of $g$, and $\ell(g'(x))^{-1}$ is the Lipschitz constant of the inverse map. 
\end{proof}

\begin{cor}
\label{teichfmla}
The Teichm\"uller metric on $\mathcal{T}(n)$ is given by:
$$
d_{Teich}(X,Y) = \frac{1}{2}\max\{\big|\log|\lambda|\big|\ :\ \lambda\text{ is an eigenvalue of }XY^{-1}\}
$$
\end{cor}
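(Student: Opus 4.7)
The proof is essentially a direct combination of the two theorems already established, so I expect no genuine obstacle, only careful bookkeeping of eigenvalues and signs.

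The plan is to apply Theorem \ref{symofthurst}, which identifies $d_{Teich}$ with the symmetrization by maximum of $d_{Th}$, and then insert the explicit formula for $d_{Th}$ from Theorem \ref{lipschitzmetricfmla} into both terms. Writing $\Lambda(M)$ for the spectrum of a matrix $M$, we have
\begin{align*}
d_{Teich}(X,Y) &= \max\bigl(d_{Th}(X,Y),\, d_{Th}(Y,X)\bigr) \\
&= \tfrac{1}{2}\max\Bigl(\max_{\mu \in \Lambda(YX^{-1})} \log|\mu|,\; \max_{\lambda \in \Lambda(XY^{-1})} \log|\lambda|\Bigr).
\end{align*}

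The next step is to relate the two spectra. Since $(XY^{-1})^{-1}=YX^{-1}$, the eigenvalues of $YX^{-1}$ are precisely the reciprocals of those of $XY^{-1}$. Moreover, since $X,Y\in\mathcal{P}_n$, the matrix $XY^{-1}$ is similar to $Y^{-1/2}XY^{-1/2}\in\mathcal{P}_n$, so its eigenvalues are strictly positive reals $\lambda_1,\dots,\lambda_n$. Thus $\max_{\mu\in\Lambda(YX^{-1})}\log|\mu| = \log\max_i \lambda_i^{-1} = -\log\min_i\lambda_i$ and $\max_{\lambda\in\Lambda(XY^{-1})}\log|\lambda| = \log\max_i \lambda_i$.

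Finally, using the elementary identity
\[
\max_i |\log \lambda_i| = \max\bigl(\log\max_i \lambda_i,\; -\log\min_i \lambda_i\bigr)
\]
(which holds for any finite collection of positive reals, handling the cases where all $\lambda_i \geq 1$, all $\lambda_i \leq 1$, or a mix), we conclude
\[
d_{Teich}(X,Y) = \tfrac{1}{2}\max\bigl\{|\log|\lambda|| : \lambda \in \Lambda(XY^{-1})\bigr\},
\]
as claimed. The absolute value on $\log|\lambda|$ (rather than just $\log\lambda$) is retained to emphasize the symmetric nature of the formula, even though the eigenvalues are already positive.
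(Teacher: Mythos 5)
Your proof is correct and follows the same route as the paper: apply Theorem \ref{symofthurst} and Theorem \ref{lipschitzmetricfmla}, then observe that the eigenvalues of $YX^{-1}$ are the reciprocals of those of $XY^{-1}$, which converts the max of the two one-sided formulas into a single max of absolute values of logarithms. You simply spell out the bookkeeping (positivity of the spectrum, the identity $\max_i|\log\lambda_i|=\max(\log\max_i\lambda_i,\,-\log\min_i\lambda_i)$) that the paper leaves implicit.
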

\begin{proof}
This is precisely the symmetrization of the formula from Theorem \ref{lipschitzmetricfmla}, since the eigenvalues of $YX^{-1}$ are the reciprocals of the eigenvalues of $XY^{-1}$.
\end{proof}

\section{The Hilbert metric on $\SL(n,\R)/\SO(n)$}
\label{hilbertmetricsection}

Liverani and Wojtkowski \cite{hilbertmetric} defined a generalization of Hilbert's projective metric for the symmetric space $X=\SL(n,\R)/\SO(n)$. Their metric $s$ arises naturally during the study of the symplectic geometry of $\R^n\times\R^n$, and measures the distance between pairs of Lagrangian subspaces. An explicit formula for the Finsler metric on the tangent space $T_ZX$ at a point $Z\in X$ associated to their Hilbert metric is also computed, along with examples of geodesics. 

Consider the standard symplectic vector space $\R^n\times\R^n$, where the symplectic form is given by:
$$
\omega((x,y),(w,z)) = \langle x,z\rangle_{\R^n} - \langle w,y\rangle_{\R^n}
$$
A subpsace $V$ of $(\R^n\times\R^n,\omega)$ is called \emph{Lagrangian} if it is a maximal subspace such that $\omega|_V\equiv0$. These subspaces must be $n$-dimensional. A Lagrangian subspace is \emph{positive} if it is the graph of a positive-definite symmetric linear map $U:R^n\to\R^n$. The collection of positive Lagrangian subspaces is parametrized by the space $\mathcal{P}_n$. 

The metric $s$ is defined as the supremum of the symplectic angle between vectors in two positive Lagrangian subspaces. A useful result is the following formula.

\begin{prop}[Proposition 5, Theorem 3, \cite{hilbertmetric}]
For two positive Lagrangian subspaces defined by $U,W:\R^n\to\R^n$, $s$ is given by
\begin{equation}
\label{lwformula}
s(U,W)  =\max\bigg\{\frac{\big|\log|\lambda|\big|}{2}\ :\ \lambda\text{ is an eigenvalue of }UW^{-1}\bigg\}.
\end{equation}
The Finsler norm $|A|_Z$ for $A\in T_ZX$ is given by
\begin{equation}
\label{finslerequation}
|A|_Z = \frac{1}{2}\max\{|\lambda|\ :\ \lambda\text{ is an eigenvalue of }AZ^{-1}\}.
\end{equation}
and the paths $t\mapsto e^{tX}$ for $t\in[0,1]$ are geodesic paths.
\end{prop}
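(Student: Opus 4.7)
The plan is to exploit the $\SL(n,\R)$-symmetry of the setup. The embedding $\SL(n,\R) \hookrightarrow \operatorname{Sp}(2n,\R)$ via $g \mapsto g \oplus (g^T)^{-1}$ acts symplectically on $(\R^n \oplus \R^n, \omega)$ and sends the positive Lagrangian parametrized by $U \in \mathcal{P}_n$ to the one parametrized by $gUg^T$. Both sides of \eqref{lwformula} are invariant under this action: the right-hand side because $(gUg^T)(gWg^T)^{-1} = g(UW^{-1})g^{-1}$ is similar to $UW^{-1}$ and hence has the same spectrum. Using transitivity of $\SL(n,\R)$ on $\mathcal{P}_n$ (Proposition \ref{slpn}), I would reduce to $W = I$, and then by the $\SO(n)$-subaction further to $U = \diag(\lambda_1,\ldots,\lambda_n)$.

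\textbf{Diagonal computation.} In this reduced setting the two positive Lagrangians $\{(x,Ux)\}$ and $\{(x,x)\}$ split as orthogonal direct sums over the coordinate planes $\R e_i \oplus \R f_i \subset \R^n \oplus \R^n$. The symplectic angle definition of $s$ from \cite{hilbertmetric} correspondingly decomposes as a maximum over $i$, and on each two-dimensional plane it is just the one-dimensional Hilbert distance between the lines $y=x$ and $y = \lambda_i x$ on the positive cone, which equals $\tfrac{1}{2}|\log \lambda_i|$. Taking the max over $i$ establishes \eqref{lwformula}.

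\textbf{Finsler norm and geodesic.} For \eqref{finslerequation}, I would differentiate the distance formula along any smooth path $\gamma$ with $\gamma(0) = Z$ and $\gamma'(0) = A$: expanding $\gamma(t)Z^{-1} = I + tAZ^{-1} + O(t^2)$ and using continuous dependence of eigenvalues, one gets $s(\gamma(t), Z) = \tfrac{t}{2}\max_i|\mu_i(AZ^{-1})| + o(t)$, which recovers \eqref{finslerequation}. For the geodesic claim, along $\gamma(t) = e^{tX}$ we have $\gamma'(t)\gamma(t)^{-1} = X$, so the Finsler speed is the constant $\tfrac{1}{2}\max_i|\mu_i(X)|$; integrating over $[0,1]$ yields Finsler length $\tfrac{1}{2}\max|\mu_i(X)| = \tfrac{1}{2}\max|\log e^{\mu_i(X)}| = s(I, e^X)$, so $\gamma$ realizes the distance.

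\textbf{Main obstacle.} The core technical point is the diagonal-case symplectic angle computation in the second paragraph, which requires carefully unpacking the definition of $s$ from \cite{hilbertmetric} and verifying its compatibility with orthogonal splittings. A shortcut avoiding symplectic geometry altogether is available: formula \eqref{lwformula} matches Corollary \ref{teichfmla} verbatim, so it suffices to show $s = d_{Teich}$. As both are $\SL(n,\R)$-invariant and continuous on $\mathcal{P}_n$, this reduces to a comparison on the positive diagonal Cartan subgroup, where both are computed explicitly; the Finsler and geodesic statements then follow by symmetrizing Proposition \ref{finslermetricformula}.
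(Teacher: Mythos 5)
The paper itself gives no proof of this proposition: it is quoted verbatim from Liverani--Wojtkowski \cite{hilbertmetric} (their Proposition~5 and Theorem~3), and the paper's only subsequent use is to observe in Proposition~\ref{hilbertthurstonmetric} that \eqref{lwformula} matches Corollary~\ref{teichfmla}. So there is no in-paper argument to compare your sketch against; it can only be judged on its own.

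On its own terms, your reduction to $W=I$ and $U$ diagonal via $\SL(n,\R)$- and $\SO(n)$-invariance is sound, but the step you flag as the core difficulty is a genuine gap rather than a routine verification. The quantity $s$ is a supremum of symplectic angles over \emph{all} pairs of nonzero vectors, one in each Lagrangian, and such vectors need not lie in a single coordinate plane $\R e_i\oplus\R f_i$; asserting that the supremum ``decomposes as a maximum over $i$'' requires actually opening the Liverani--Wojtkowski definition of the symplectic angle and proving the supremum is attained on a coordinate plane. Your proposed ``shortcut'' does not escape this: showing $s=d_{Teich}$ still requires computing $s$ from its symplectic-geometric definition (Corollary~\ref{teichfmla} gives a formula for $d_{Teich}$, not for $s$), so the symplectic computation is unavoidable, and invoking \eqref{lwformula} to perform the diagonal comparison would be circular. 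Finally, for the geodesic claim your calculation shows $t\mapsto e^{tX}$ has Finsler length equal to $s(I,e^X)$, but to conclude it is a geodesic you also need the complementary bound that every smooth path joining $I$ to $e^X$ has Finsler length at least $s(I,e^X)$; this is exactly the inequality chain the paper carries out for the analogous Thurston statement in the proof of Proposition~\ref{finslermetricformula}, and it should be imported explicitly if you intend to ``symmetrize'' that argument.
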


Notice that Equation \ref{lwformula} matches the formula in Corollary \ref{teichfmla}, so we conclude:

\begin{prop}
\label{hilbertthurstonmetric}
By the identification $\mathcal{T}(n)\leftrightarrow\SL(n,\R)/\SO(n)$, $d_{Teich}$ is equal to the Hilbert projective metric, and $d_{Teich}$ is a Finsler metric with norm defined by Equation \ref{finslerequation}. The paths $t\mapsto e^{tX}$ for $t\in[0,1]$ are geodesics. 
\end{prop}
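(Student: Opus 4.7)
The plan is to observe that the proposition is essentially a direct translation of the formulas already at hand, so the proof amounts to matching the two explicit expressions and then transporting the Finsler-geometric consequences across the identification.

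First, I would invoke the identification $\mathcal{T}(n)\leftrightarrow\SL(n,\R)/\SO(n)\leftrightarrow\mathcal{P}_n$ so that points of Teichm\"uller space are represented by positive-definite symmetric matrices $X,Y$, exactly as in the Liverani--Wojtkowski setup where positive Lagrangian subspaces are parametrized by $\mathcal{P}_n$. Then I would place Equation \ref{lwformula} for $s(U,W)$ side-by-side with the formula for $d_{Teich}(X,Y)$ from Corollary \ref{teichfmla}: both are $\tfrac12\max\{|\log|\lambda||:\lambda\text{ eigenvalue of }XY^{-1}\}$. Termwise equality of the two expressions yields $d_{Teich}=s$ as functions on $\mathcal{P}_n\times\mathcal{P}_n$, which is the first assertion.

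Next, for the Finsler statement, I would note that once the two distance functions coincide, any Finsler structure inducing one induces the other. By Theorem 3 of \cite{hilbertmetric} the Hilbert metric $s$ is Finsler with norm $|A|_Z = \tfrac12\max\{|\lambda|:\lambda\text{ eigenvalue of }AZ^{-1}\}$ on $T_Z X\cong\fsl(n,\R)$ (after left-translation from the identity), so the same formula gives a Finsler structure for $d_{Teich}$. Finally, for the geodesic assertion, the paths $t\mapsto e^{tX}$ are geodesics of $s$ by the same reference; since $d_{Teich}=s$ and since being a distance-realizing path depends only on the metric, these paths are also geodesics for $d_{Teich}$.

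There is essentially no obstacle here beyond bookkeeping: the content of the proposition is that two independently motivated Finsler metrics on $\SL(n,\R)/\SO(n)$ have been shown by separate computations (Proposition \ref{finslermetricformula} and Theorem \ref{symofthurst} on our side; \cite{hilbertmetric} on the Liverani--Wojtkowski side) to have the same global distance formula, and the only mild thing to check is that the symmetrization of $d_{Th}$ via $\max$ performed in Theorem \ref{symofthurst} produces the absolute value $|\log|\lambda||$ appearing in Equation \ref{lwformula} rather than just $\log|\lambda|$, which is immediate from the fact that the eigenvalues of $YX^{-1}$ are reciprocals of those of $XY^{-1}$.
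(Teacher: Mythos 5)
Your proposal is correct and follows the paper's own (very brief) argument: match Equation \ref{lwformula} with the formula in Corollary \ref{teichfmla} and then import the Finsler norm and geodesic statements directly from \cite{hilbertmetric}. The additional remark about the symmetrization producing $|\log|\lambda||$ is a welcome clarification but does not change the route.
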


The significance of Proposition \ref{hilbertthurstonmetric} is that the same metric $d_{Teich}$ on $\mathcal{T}(n)$ arises in a natural way in a very different context. This provides further evidence of the usefulness and richness of the study of this Finsler metric on $\SL(n,\R)/\SO(n)$.

\begin{rem}
The Hilbert metric, defined on open convex subsets $C\subseteq\R^n$ not containing a line, is based on the cross-ratio of two points $a,b$ and the points where the line $\overline{ab}$ meets the boundary $\partial C$. When $C$ is the positive orthant of $\R^n$, one obtains a Finsler metric with many properties similar to the metric $s$. 
\end{rem}

\section{The Weil-Petersson metric}
\label{wpmetricsection}
In this section, we will define the Weil-Petersson metric on $\mathcal{T}(n)$. Fischer-Tromba \cite{fischertromba} show the classical Weil-Petersson metric is recovered using a $L^2$-pairing between metrics on hyperbolic surfaces. In \cite{yamada}, Yamada gives an exposition of this approach, including a definition of the Weil-Petersson metric for the Teichm\"uller space of the flat 2-torus. We will follow Yamada's presentation and explain how this quickly generalizes to the case of flat tori in all dimensions.

Write $K=\SO(n)$, $G=\SL(n,\R)$. Recall first the tangent space of $G/K$ at the basepoint $eK$ is the vector space of $n\times n$ symmetric matrices of trace 0. The $\SL(n,\R)$-invariant metric at this point is defined by:
$$
\langle X,Y\rangle_{eK} = \tr(XY).
$$
By translation, at other points $g K\in X$ for $g\in\SL(n,\R)$ the metric is given by
\begin{equation}
\label{symmetricmetric}
\langle X,Y\rangle_{gK} = \tr(g^{-1}Xg^{-1}Y).
\end{equation}
Now, recall that $\mathcal{T}(n)\cong\mathcal{P}_n$ is also the space of unit-volume flat metrics on $\mathbb{T}^n$. 

The tangent space to the set of Riemannian metrics on a manifold is naturally the space of smooth symmetric $(0,2)$-tensors (\cite{yamada}, \S 3). There is a natural $L^2$ pairing $\langle\langle,\rangle\rangle_{L^2(g)}$ at a metric $g$ defined by:

\begin{equation}
\label{l2pairing}
\langle\langle h_1,h_2\rangle\rangle_{L^2(g)} = \int_M\langle h_1(x),h_2(x)\rangle_{g(x)}d\mu_g(x)
\end{equation}
using the volume form $d\mu_g$ of $g$. Using local coordinates $g^{ij}$ for $g$ and $(h_k)_{lm}$ for $h_k$, $k=1,2$, we can rewrite the integrand as:
$$
\langle h_1(x),h_2(x)\rangle_{g(x)} = \sum_{1\leq i,j,k,l\leq2}g^{ij}g^{kl}(h_1)_{ik}(h_2)_{jl}=\text{Tr}(g^{-1}h_1g^{-1}h_2).$$

In \S3.2 of \cite{yamada}, two conditions are imposed on the deformations of a metric in order to ensure that each tensor $h$ is tangent to the Teichm\"uller space and not merely the space of all possible metrics: (1) the deformations must be $L^2$-perpendicular to the action of the identity component of the diffeomorphism group $\text{Diff}_0(M)$, and (2) the deformations must preserve curvature. It is shown there that these conditions are equivalent to being divergence-free and trace-free. 

Finally, we arrive at the definition of the Weil-Petersson metric on Teichm\"uller space with the viewpoint of deformations of Riemannian metrics. 
\begin{defn}[\cite{fischertromba}, Theorem 0.8]
The $L^2$-pairing in Equation \ref{l2pairing} restricted to the trace-free, divergence-free tensors is called the \emph{Weil-Petersson metric}.
\end{defn}

We apply the above definitions to $\mathcal{T}(n)$. Deformations of flat metrics which remain in the Teichm\"uller space define a subspace of all $(0,2)$-tensors. Maintaining unit volume restricts to traceless tensors, while the restriction to flat metrics implies the tensors are constant $\R^n$-coordinates. These are trace-free and divergence-free tensors. Thus the integrand in Equation \ref{l2pairing} is constant and given globally by the local coordinates. The volume of each metric is 1, so the $L^2$-pairing simplifies to:
$$
\langle\langle h_1,h_2\rangle\rangle_{L^2(g)} = \text{Tr}(g^{-1}h_1g^{-1}h_2).
$$
This matches the symmetric metric in Equation \ref{symmetricmetric}. We now have for all $n\geq2$:

\begin{prop}
\label{wpmetric}
The Teichm\"uller space $\mathcal{T}(n)$ with the Weil-Petersson metric is isometric to the $\SL(n,\R)/\SO(n)$ with the $\SL(n,\R)$-invariant Riemannian metric.
\end{prop}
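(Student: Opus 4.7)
The plan is to compare tangent spaces and metric forms directly, leveraging the computations performed in the paragraphs immediately above the statement. Essentially all the hard work has already been laid out in the Fischer-Tromba description and the trace identity for the integrand; what remains is a direct comparison with Equation \ref{symmetricmetric}.

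First, I would pin down the tangent space to $\mathcal{T}(n)$ at a point represented by a flat unit-volume metric $g$, which under Proposition \ref{slpn} we identify with an element $P\in\mathcal{P}_n$. A tangent vector to the space of Riemannian metrics is a symmetric $(0,2)$-tensor $h$. For the deformation to stay inside $\mathcal{T}(n)$: (a) flatness forces $h$ to have constant coefficients in Euclidean coordinates on $\mathbb{T}^n = \R^n/\Z^n$, so $h$ is encoded by a constant symmetric $n\times n$ matrix; (b) constancy makes $h$ automatically divergence-free; (c) preserving unit volume gives the infinitesimal constraint $\tr(g^{-1}h)=0$, i.e.\ the trace-free condition. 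These are exactly the symmetric matrices cutting out the tangent space $T_{gK}\bigl(\SL(n,\R)/\SO(n)\bigr)$ under the identification from Proposition \ref{slpn}.

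Next, I would evaluate the $L^2$-pairing \ref{l2pairing} on two such tangent vectors $h_1,h_2$. Since the integrand $\tr(g^{-1}h_1 g^{-1}h_2)$ is built from constant matrices and is therefore constant on $\mathbb{T}^n$, and since the volume is $1$ by assumption, the integral collapses to
$$
\langle\langle h_1,h_2\rangle\rangle_{L^2(g)} \;=\; \tr(g^{-1}h_1 g^{-1}h_2).
$$
This is precisely formula \ref{symmetricmetric} for the $\SL(n,\R)$-invariant Riemannian metric at $gK$. Hence the identification $\mathcal{T}(n)\leftrightarrow \SL(n,\R)/\SO(n)$ carries the Weil-Petersson metric onto the symmetric-space Riemannian metric pointwise, giving the proposition.

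The only subtle point I would want to verify carefully is that the Fischer-Tromba conditions genuinely cut out the full tangent space to $\mathcal{T}(n)$ inside the tensor space, without missing or redundant directions. For flat tori this is immediate: the isometry group of a flat torus acts transitively (Lemma \ref{lehtohomotopy}), so no further quotient by higher-dimensional isotopy classes is needed, and every traceless constant symmetric matrix arises as a genuine first-order deformation through flat unit-volume metrics. Aside from this bookkeeping, the proof is a direct assembly of the formulas already developed above the statement.
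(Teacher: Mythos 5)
Your proposal is correct and follows essentially the same route as the paper: both use the Fischer--Tromba/Yamada $L^2$-pairing, observe that the tangent tensors to $\mathcal{T}(n)$ are constant (hence divergence-free) and traceless (from unit volume), note the integrand $\tr(g^{-1}h_1g^{-1}h_2)$ is constant so the integral collapses to it, and match the result against Equation \ref{symmetricmetric}. Your extra paragraph checking that the Fischer--Tromba conditions exactly cut out $T\mathcal{T}(n)$ is a reasonable addition but doesn't change the argument in any substantive way.
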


\section{Symmetric spaces and compactifications}
\label{symmetricbackground}

We briefly review some relevant ideas about symmetric spaces and compactifications. The main references are \cite{hall}, \cite{bridson}, \cite{borel}, \cite{gjt}, and \cite{hsw}. In the following, let $G=\SL(n,\R)$, $K=\SO(n)$, and $X=G/K$.

\subsection*{Lie theory and symmetric spaces}
\label{liebackground}

The Lie algebra of $G$ is $\mathfrak{g}=\fsl(n,\R)$ consisting of traceless matrices, which decomposes as 
$$
\mathfrak{g} = \mathfrak{k}\oplus\mathfrak{p}
$$
where $\mathfrak{k}$ is the Lie algebra of $K$, consisting of traceless anti-symmetric matrices, and $\mathfrak{p}$ consists of traceless symmetric matrices.

Fix a \emph{Cartan subalgebra} $\mathfrak{a}\subseteq\mathfrak{p}$ consisting of traceless diagonal matrices. The dimension of $\mathfrak{a}$ is the \emph{rank} of $G$ and of $X$. Here, the rank is $r=n-1$. Denote $A=\exp(\mathfrak{a})$, the subgroup of $G$ corresponding to the subalgebra $\mathfrak{a}$. A totally geodesic copy of $\R^r$ embedded in the symmetric space $X$ is called a \emph{maximal flat}. 

We next recall a few important examples of representations of $G$ and $\mathfrak{g}$. 

\begin{exmp}
The \emph{standard representation} of $G$ is the inclusion $$\Pi:\SL(n,\R)\hookrightarrow\text{GL}(n,\C).$$ This is a faithful representation. The standard representation of $\mathfrak{g}$ is the inclusion $$\pi:\fsl(n,\R)\hookrightarrow\text{M}_n(\C).$$ Composing $\Pi$ with the quotient map $\text{GL}(n,\C)\to\text{PGL}(n,\C)$ defines a projective faithful representation $$\Pi_P:\SL(n,\R)\to\text{PGL}(n,\C).$$ 
\end{exmp}

\begin{exmp}
The \emph{adjoint representation} of the Lie algebra $\mathfrak{g}$ is defined by
$$
\text{Ad}:\mathfrak{g}\to\text{M}_n(\C),\ A\mapsto[A,\cdot]\text{ for }A\in\mathfrak{g}
$$
\end{exmp}

The \emph{dual} of a representation $\Pi$ of $G$ is the representation $\Pi^*$ defined by $$\Pi^*(g) = \tau(g^{-1})^T$$ where $A^T$ is the transpose of $A$. The dual of a representation $\pi$ of a Lie algebra is defined by $$\pi^*(A) = -\pi(A)^T.$$

The direct sum of two representations $\tau_1:G\to\text{GL}(n,\C)$ and $\tau_2:G\to\text{GL}(m,\C)$ is the representation $\tau_1\oplus\tau_2:G\to\text{GL}(n+m,\C)$ with the diagonal action.

We next recall weights and roots associated to $\mathfrak{a}$. A natural inner product on $\mathfrak{a}$ is given by
$$
\langle A,B\rangle = \tr(\overline{A}^{T}B).
$$ 
This inner product identifies $\mathfrak{a}$ with the dual space $\mathfrak{a}^*$. Let $\pi$ be a nonzero representation of $\mathfrak{g}$ acting on $\R^m$. We say $\mu\in\mathfrak{a}$ is a \emph{weight} for $\pi$ if there exists a nonzero $v\in \R^m$ such that 
\begin{equation}
\label{weightdef}
\pi(H)\cdot v = \langle \mu,H\rangle v
\end{equation}
for all $H\in\mathfrak{a}$. The weight space of $\mu$, denoted $V_{\mu}$, is the subspace of all $v\in\R^m$ for which Equation \ref{weightdef} holds. Each representation of a Lie group has an associated representation of its Lie algebra. The weights of a Lie group representation are defined to be the weights of the associated Lie algebra representation. 

\begin{exmp}
Let $\pi$ be the standard representation for $\fsl(n,\R)$. Then the weights are given by the standard basis $e_i$, so $\langle e_i,\cdot\rangle$ returns the $i$th diagonal element of a matrix, and the weight space for $e_i$ is the line $\{\lambda e_i\ :\ \lambda\in\R\}$.

Let $\pi^*$ be the dual of the standard representation. Then the weights are $-e_i$ with corresponding weight spaces generated by $e_i$ after identifying $\R^n$ with its dual. 
\end{exmp}

Let $\Pi_1\oplus\Pi_2$ be a direct sum of two representations acting on $V\oplus W$, and let $$\mathcal{W}_1=\{\mu_i\ :\ i=1,\ldots,n\}\text{  and  }\mathcal{W}_2=\{\nu_j\ :\ j=1,\ldots,m\}$$ be the weights of $\Pi_1$ and $\Pi_2$ respectively, with corresponding weight spaces $V_i\subseteq V$ and $W_j\subseteq W$. Then the weights of $\Pi_1\oplus\Pi_2$ are $\mathcal{W}_1\cup\mathcal{W}_2$ with weight spaces $V_i\oplus\{0\}$ and $\{0\}\oplus W_j$ when $\mu_i\notin\mathcal{W}_2$ and $\nu_j\notin\mathcal{W}_1$. If some $\mu_i=\nu_j$, then its (common) weight space is $V_i\oplus W_j$.

The set of \emph{roots} of $\mathfrak{g}$ relative to $\mathfrak{a}$, denoted $\Sigma$ are the weights of the adjoint representation. A set $\Delta$ of \emph{simple roots} is a basis of $\mathfrak{a}$ made up of roots such that any root for $\mathfrak{a}$ can be expressed as an integer linear combination of elements of $\Delta$ where all coefficients are non-positive or non-negative. 

\begin{exmp}
A set of simple roots for $\fsl(n,\R)$ with the Cartan subalgebra $\mathfrak{a}$ defined above is given by $$\alpha_1=(1,-1,0,\ldots,0),\ \alpha_2=(0,1,-1,0,\ldots,0),\ldots,\ \alpha_{n-1}=(0,\ldots,0,1,-1).$$ The root space for $\alpha_j$ is spanned by the matrix $E^{j,j+1}$ which has a 1 in the $(j,j+1)$ spot and 0 elsewhere. 
\end{exmp}

Given a representation of $\mathfrak{g}$, a choice of simple roots endows the set of weights with a partial ordering (\S8.8 in \cite{hall}). If $\{\alpha_1,\ldots,\alpha_n\}$ is the set of simple roots of $\mathfrak{g}$ and $\lambda_1,\lambda_2$ are weights of a representation, we say $\lambda_2\succeq\lambda_1$ if there exist non-negative real numbers $c_1,\ldots,c_n$ such that 
$$
\lambda_2-\lambda_1=c_1\alpha_1+\cdots+c_n\alpha_n.
$$
It is a fundamental result (Theorems 9.4 and 9.5 in \cite{hall}) that irreducible, finite-dimensional representations of semisimple Lie algebras are classified by their highest weights (which always exist). 

To each root $\alpha$ of $\mathfrak{g}$ is associated a hyperplane $P_{\alpha} = \ker(\langle\alpha,\cdot\rangle)$. The complement of these hyperplanes, $\mathfrak{a}\setminus \cup_{\alpha\in\Sigma}P_{\alpha}$, is a set of simplicial complexes, each connected component of which is called a \emph{Weyl chamber}. A choice of a set of simple roots corresponds to distinguishing a \emph{positive} Weyl chamber.

Now, we define a special type of Finsler metric built from Minkowski norms which plays a major role in the theory of compactifications of symmetric spaces.

\begin{defn}
A \emph{polyhedral Finsler metric} on a symmetric space is a Finsler metric such that for each tangent space, the induced unit ball is a polytope.
\end{defn}

\begin{thm}[\cite{planche}, Theorem 6.2.1]
\label{finslerballs}
The following are in natural bijection:
\begin{enumerate}
\item the $W$-invariant convex closed balls in $\mathfrak{a}$
\item the $\text{Ad}(K)$-invariant convex closed balls of $\mathfrak{p}$
\item the $G$-invariant Finsler metrics on $X=G/K$
\end{enumerate}
\end{thm}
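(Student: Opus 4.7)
The plan is to establish the three objects form a commuting triangle of bijections by proving $(3) \leftrightarrow (2)$ and $(2) \leftrightarrow (1)$ separately, each relying on standard structural features of symmetric spaces of noncompact type.

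For $(2) \leftrightarrow (3)$, I would argue that a $G$-invariant Finsler metric $F$ on $X = G/K$ is determined by its restriction to $T_{eK}X$ via the formula $F(gK, v) = F_{eK}((dL_{g^{-1}})_{gK} v)$, and this is well-defined across coset representatives precisely when the norm at the basepoint is invariant under the isotropy representation of $K$. Under the identification $T_{eK}X \cong \mathfrak{p}$, that isotropy representation is $\text{Ad}|_K$. Conversely, any $\text{Ad}(K)$-invariant norm on $\mathfrak{p}$ extends uniquely by $G$-translation to a $G$-invariant Finsler metric. Passing from norms to their closed unit balls is a bijection between $\text{Ad}(K)$-invariant norms and $\text{Ad}(K)$-invariant centrally symmetric convex closed bounded neighborhoods of the origin in $\mathfrak{p}$, giving the first equivalence.

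For $(2) \leftrightarrow (1)$, the key structural input is the infinitesimal $KAK$ decomposition: $\mathfrak{p} = \text{Ad}(K) \cdot \mathfrak{a}$, and every $\text{Ad}(K)$-orbit in $\mathfrak{p}$ meets $\mathfrak{a}$ in a single orbit of the Weyl group $W = N_K(\mathfrak{a})/Z_K(\mathfrak{a})$. I would define the forward map by restriction $B \mapsto B \cap \mathfrak{a}$, noting that $W$-invariance is automatic because $W$ is realized as the subgroup of $\text{Ad}(K)$ stabilizing $\mathfrak{a}$. In the reverse direction, I would extend a $W$-invariant convex closed ball $B_{\mathfrak{a}} \subset \mathfrak{a}$ by $\widetilde{B} := \text{Ad}(K) \cdot B_{\mathfrak{a}} \subset \mathfrak{p}$. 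That these maps are mutually inverse follows directly from the single-$W$-orbit condition combined with $W$-invariance of $B_{\mathfrak{a}}$.

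The main obstacle is showing $\widetilde{B}$ is convex in $\mathfrak{p}$: a union of curved $\text{Ad}(K)$-orbits is not convex a priori. I would invoke Kostant's convexity theorem, which asserts that for $H \in \overline{\mathfrak{a}_+}$ the orthogonal projection of $\text{Ad}(K) \cdot H$ onto $\mathfrak{a}$ equals $\text{conv}(W \cdot H)$. Dualizing, for any linear functional $v \in \mathfrak{p}^* \cong \mathfrak{p}$, one computes $\sup_{X \in \widetilde{B}} \langle v, X \rangle$ as the support function of $B_{\mathfrak{a}}$ evaluated at the $\mathfrak{a}$-representative of $\text{Ad}(K) \cdot v$; using $W$-invariance and convexity of $B_{\mathfrak{a}}$, this exhibits $\widetilde{B}$ as the intersection of closed half-spaces, hence convex. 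Closedness and balanced symmetry of $\widetilde{B}$ are then inherited routinely from $B_{\mathfrak{a}}$ together with compactness of $K$, completing the bijection.
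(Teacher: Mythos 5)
The paper does not itself prove this theorem; it cites Planche's thesis and offers only a one-line gloss about extending Weyl-invariant data from a maximal flat by $G$-invariance. Your plan is the standard route and is consistent with that gloss: $G$-invariance reduces the metric to an $\text{Ad}(K)$-invariant norm on $\mathfrak{p}$; $\mathfrak{p}=\text{Ad}(K)\cdot\mathfrak{a}$ with $\text{Ad}(K)$-orbits meeting $\mathfrak{a}$ in single $W$-orbits gives the restriction/extension bijection; and Kostant's convexity theorem is exactly the input needed to show the extended ball is convex.

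There is, however, a genuine error in the $(2)\leftrightarrow(3)$ step and again where you assert $\widetilde{B}$ is ``balanced'': you restrict to \emph{centrally symmetric} convex balls. The paper's Finsler norms are only positively homogeneous (``linear under scaling by positive scalars''), so unit balls are convex bodies containing $0$ in their interior but need not satisfy $B=-B$. This is not a cosmetic point in this paper: the Thurston metric is the central example, and its unit ball in $\mathfrak{a}$, computed in Proposition \ref{thurstonhoroboundary}, is $\{y:\sum_i y_i=0,\ y_i\leq 2\ \forall i\}$, which is $W$-invariant but not centrally symmetric for $n\geq 3$ (the Weyl group $S_n$ does not contain $-\mathrm{Id}$ on $\mathfrak{a}$ once $n\geq 3$). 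As stated, your bijection would exclude precisely the metric this paper is built around. Separately, your convexity argument via the support function needs one more step: matching support functions identifies $\text{conv}(\widetilde B)$, not $\widetilde B$ itself, so you must still show $\text{conv}(\widetilde B)\subseteq\widetilde B$ --- e.g., take $Z\in\text{conv}(\widetilde B)$, conjugate into $\mathfrak a$, project, and use Kostant together with $W$-invariance and convexity of $B_{\mathfrak a}$ to conclude $Z\in B_{\mathfrak a}$.
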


The idea of this theorem is that, given a Finsler metric on a maximal flat $F$ of $G/K$, if it is invariant under the Weyl group action, it can be extended to all of $G/K$ by enforcing $G$-invariance. This defines a $G$-invariant Finsler metric.

\subsection*{Compactifications}

Let $X$ be a locally compact space. A \emph{compactification} of $X$ is a pair $(\overline{X},i)$ where $\overline{X}$ is a compact space and $i:X\to \overline{X}$ is a dense topological embedding. If $(\overline{X}_1,i_1)$ and $(\overline{X}_2,i_2)$ are compactifications of $X$, we say they are \emph{isomorphic} if there exists a homeomorphism $\phi:\overline{X}_1\to \overline{X}_2$ such that $\phi\circ i_1=i_2$. If $\phi$ is only continuous, then it is necessarily surjective, and $(\overline{X}_1,i_1)$ is said to \emph{dominate} $(\overline{X}_2,i_2)$. Domination puts a partial order on the set of compactifications. 

In the case of symmetric spaces $X=G/K$, we are also interested in compactifications that admit a continuous $G$-action. The relations of \emph{$G$-isomorphism} and \emph{$G$-compactification} are extensions of the above definitions with the added condition of equivariance under the $G$ action.

Horofunction compactifications were introduced by Gromov \cite{gromov}. Let $(X,d)$ be a (possibly asymmetric) proper metric space with $C(X)$ the set of continuous real-valued functions on $X$ endowed with the $C^0$ topology. Denote by $\tilde{C}(X)$ the quotient of $C(X)$ by constant functions. We embed $X$ into $\tilde{C}(X)$ as follows:
$$
\psi:X\to\tilde{C}(X),\ z\mapsto[\psi_z]\text{ where }\psi_z(x)=d(x,z).
$$

\begin{defn}
The \emph{horofunction compactification} $X\cup\partial_{hor}X$ of $X$ is the topological closure of the image of $\psi$:
$$
\overline{X}^{hor}:= \text{cl}\{[\psi_z]|z\in X\}\subseteq \tilde{C}(X)
$$
\end{defn}

It is known that the horofunction compactification of a non-positively curved, complete, simply-connected Riemannian symmetric space $G/K$ with its $G$-invariant metric is naturally isomorphic to its visual compactification. This holds more generally for CAT(0) spaces (Theorem 8.13, \S II.8 in \cite{bridson}).

Next, we briefly review Satake compactifications of symmetric spaces, first defined in \cite{satake}. See also Chapter IV of \cite{gjt} and Chapter I.4 of \cite{borel}, and \cite{hsw} \S5.1 for generalized Satake compactifications. 

Let $X=G/K$ be a symmetric space associated to a semisimple Lie group $G$ with maximal compact subgroup $K$. Let $\tau:G\to\text{PSL}(m,\C)$ be an irreducible projective faithful representation such that $\tau(K)\subseteq \text{PSU}(m)$. This induces a map $$\tau_X:X\to\mathbb{P}(\mathcal{H}_n)$$ where $\mathbb{P}(\mathcal{H}_n)$ is the projective space of Hermitian matrices, defined by
$$
\tau_X(gK) = \tau(g)\overline{\tau(g)}^{T}.
$$
This is a topological embedding (Lemma 4.36 in \cite{gjt}). 

\begin{defn}
The \emph{Satake compactification of $X$ associated to $\tau$} is the closure of $\tau_X(X)$ in $\mathbb{P}(\mathcal{H}_n)$ and is denoted by $\overline{X}_{\tau}^S$. 
\end{defn}

Two Satake compactifications are $G$-isomorphic if and only if the highest weights of their representations lie in the same Weyl chamber face, so there are only finitely many different $G$-isomorphism types (Chapter IV, \cite{gjt}). 

\begin{defn}
The \emph{maximal Satake compactification} of a symmetric space is a Satake compactification whose highest weight lies in the interior of the positive Weyl chamber. A \emph{minimal Satake compactification} of a symmetric space is a Satake compactification whose highest weight lies in an edge of the Weyl chamber. 
\end{defn}
It is known that there is a unique (up to $G$-isomorphism) maximal Satake compactification which dominates all other Satake compactifications, and many minimal Satake compactifications. For $\SL(n,\R)$, it is known that the standard representation induces a minimal Satake compactification \cite[Proposition I.4.35]{borel}. 

We will also need \emph{generalized Satake compactifications}, the definition of which differs only in that the assumption that $\tau$ is irreducible is dropped.

In \cite{hsw}, Haettel, Schilling, Walsh, and Wienhard related generalized Satake compactifications of a symmetric space to horofunction compactifications of polyhedral Finsler metrics.

\begin{thm}[\cite{hsw} Theorem 5.5]
\label{hswmainthm}
Let $\tau:G\to\PSL(n,\C)$ be a projective faithful representation, and $X=G/K$ be the associated symmetric space, where $X$ is of non-compact type. Let $\mu_1,\ldots,\mu_k$ be the weights of $\tau$. Let $d$ be the polyhedral Finsler metric whose unit ball in a Cartan subalgebra is 
$$
B = -D^{\circ} = -\text{conv}(\mu_1,\ldots,\mu_k).
$$
where $\text{conv}$ is the convex hull. Then $\overline{X}^S_{\tau}$ is $G$-isomorphic to $\overline{X}^{hor}_d$. 
\end{thm}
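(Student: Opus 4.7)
The plan is to prove this by analyzing both compactifications along a maximal flat $F = \exp(\mathfrak{a}) \cdot eK$, where the polyhedral and Satake pictures both reduce to linear algebra, and then extending by $G$-equivariance using the Cartan decomposition $G = KAK$. The key observation is that both constructions stratify the boundary by combinatorial data attached to the polytope $D^{\circ} = \operatorname{conv}(\mu_1, \ldots, \mu_k)$: on the Satake side, a boundary point records which weight spaces ``survive'' in the projective limit; on the horofunction side, a boundary point records which face of the dual polytope a sequence goes to infinity through.

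First, I would compute the Satake embedding along the flat. Diagonalizing $\tau$ simultaneously on weight spaces, for $H \in \mathfrak{a}$ one has $\tau(\exp H) = \sum_i e^{\langle \mu_i, H \rangle} P_{\mu_i}$, where $P_{\mu_i}$ is the orthogonal projection onto the weight space of $\mu_i$. Hence $\tau_X(\exp H) = \tau(\exp H)\overline{\tau(\exp H)}^T = \sum_i e^{2\langle \mu_i, H \rangle} P_{\mu_i}$, and in projective coordinates its limit along a ray $H_t = tH_0$ as $t \to \infty$ is governed by which weights maximize $\langle \mu_i, H_0\rangle$, i.e.\ by the face $F(H_0)$ of $D^{\circ}$ exposed by $H_0$. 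Second, I would compute horofunctions for the metric $d$ with unit ball $-D^{\circ}$. Its support function is $\|H\|_d = \max_i \langle -\mu_i, H\rangle$ up to sign conventions, and a direct computation using Proposition \ref{finslermetricformula}-style reasoning shows that along a ray $\exp(tH_0)\cdot o$ the Busemann function exists and depends only on the exposed face $F(H_0)$ of $D^{\circ}$. Thus both boundaries over the flat are parameterized by faces of $D^{\circ}$, and in fact by the same face structure.

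Third, I would assemble a $G$-equivariant homeomorphism $\Phi: \overline{X}^S_{\tau} \to \overline{X}^{hor}_d$. On $X$ itself $\Phi$ is the identity (after identifying $X$ with its image in each). For boundary points, I would use the fact that every boundary point of either compactification is $K$-translate of a limit along a ray in the positive Weyl chamber (by the Cartan decomposition together with the fact that $d$ is $G$-invariant and $\tau_X$ is $K$-equivariant in an appropriate sense). The map on the flat boundary is the bijection coming from faces of $D^{\circ}$; one extends by $\Phi(k\cdot\xi) = k \cdot \Phi(\xi)$ and checks that this is well-defined (i.e.\ respects stabilizers in $K$ of boundary points) and continuous.

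The main obstacle is the last step: showing that $\Phi$ is continuous and a bijection on the boundary. Continuity requires checking that if a sequence $g_n K$ in $X$ converges to a boundary point in one compactification, then it converges to the image point in the other. Concretely, one writes $g_n = k_n a_n k_n'$ via Cartan decomposition, and must show that the limit is governed by the joint behavior of $k_n$ and of the face of $D^{\circ}$ exposed by the $\log a_n$; the subtlety is that different sequences producing the same boundary point on one side must produce the same boundary point on the other. This involves identifying the stabilizer in $K$ of the relevant face of $D^{\circ}$ with a subgroup related to the parabolic fixing the corresponding weight-face flag, which is where the hypothesis that $X$ is of non-compact type and that $\tau$ is a projective representation is used. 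Once this matching of stabilizers is established, continuity and bijectivity follow from general facts about equivariant maps between compact $G$-spaces with the same orbit structure.
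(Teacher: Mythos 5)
The paper does not give a proof of this statement---it is imported verbatim as Theorem 5.5 of \cite{hsw}---so there is no internal argument to compare your sketch against. Judged on its own terms, your high-level strategy (reduce to a maximal flat, match boundary strata via the combinatorics of the weight polytope, and globalize using the Cartan decomposition and $K$-equivariance) is in fact the architecture of the proof in \cite{hsw}, so the plan is aimed in the right direction.

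The gap is that each of the steps you compress into a phrase contains most of the work. Your assertion that both flat boundaries are ``parameterized by faces of $D^{\circ}$'' undersells the actual structure: for a polyhedral norm on $\R^r$ the horofunction boundary is not a finite combinatorial object but is stratified by proper exposed faces of the dual ball, with each stratum carrying a continuous parameter (roughly, a point in the corresponding face of the primal ball), and likewise each Satake boundary stratum is an entire lower-rank symmetric space attached to a parabolic, not a single point per face. Identifying these two nontrivial stratifications \emph{together with their topologies} is the content of the theorem, not something that follows from ``same face structure.'' You then correctly flag continuity and well-definedness of $\Phi$ as the main obstacle, but leave it at ``one checks''; in \cite{hsw} this is precisely where the bulk of the proof lives, requiring an induction through parabolics and a careful analysis of Cartan-decomposed sequences $g_n = k_n a_n k_n'$ in which neither the $K$-components nor the exposed face of $\log a_n$ need stabilize. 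As written, the proposal is a sound roadmap but does not constitute a proof of the theorem.
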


\begin{exmp}
The horofunction compactification of $X=\SL(n,\R)/\SO(n)$ with respect to the standard $\SL(n,\R)$-invariant Riemannian metric is not isomorphic to a generalized Satake compactification because the unit ball in a flat is a Euclidean ball, which is not the convex hull of finitely many points. 
\end{exmp}

\section{Horofunction and Satake compactifications}
\label{compactifications}

In this section, we will describe horofunction compactifications of $\mathcal{T}(n)$ with the Thurston and Teichm\"uller metrics defined in Sections \ref{ntorusthurston} and \ref{teichhidim}.

\subsection*{The Thurston Metric}

Recall that the standard representation of $\SL(n,\R)$ induces a minimal Satake compactification of $\mathcal{T}(n)$. It has the following metric realization. 
\begin{prop}
\label{thurstonhoroboundary}
The following compactifications are $G$-isomorphic:
$$
\overline{\mathcal{T}(n)}^{hor}_{d_{Th}} \cong_G \overline{\mathcal{T}(n)}^S_{\pi}
$$
where $\Pi$ is the standard representation of $G=\SL(n,\R)$. 
\end{prop}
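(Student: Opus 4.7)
The plan is to apply the HSW Theorem \ref{hswmainthm} after identifying, up to a positive scalar, the Thurston metric $d_{Th}$ with the polyhedral Finsler metric prescribed by the HSW recipe from the standard representation $\Pi$ of $G=\SL(n,\R)$.

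First, I would restrict the Thurston Finsler norm to the Cartan subalgebra $\mathfrak{a}$ of traceless diagonal matrices. By Proposition \ref{finslermetricformula} evaluated at the basepoint $I$, for $X=\diag(x_1,\ldots,x_n)\in\mathfrak{a}$ we have $|X|_{Th(I)}=\frac{1}{2}\max_i x_i$, so its unit sublevel set in $\mathfrak{a}$ is the Weyl-invariant polytope
\[
B_{Th}=\{X\in\mathfrak{a}:x_i\leq 2\text{ for all }i\}.
\]
By Theorem \ref{finslerballs}, this polytope determines $d_{Th}$ on all of $\mathcal{T}(n)$ via $G$-invariance (which is the content of the corollary to Theorem \ref{lipschitzmetricfmla}).

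Next, I would identify the weights of $\Pi$ inside $\mathfrak{a}$. The weights are the functionals $X\mapsto x_i$, which under the trace-form identification $\mathfrak{a}\cong\mathfrak{a}^*$ correspond to the vectors $\bar{e}_i:=e_{ii}-\frac{1}{n}I\in\mathfrak{a}$. A direct calculation then verifies
\[
-\conv(\bar{e}_1,\ldots,\bar{e}_n) = \Bigl\{X\in\mathfrak{a}:x_i\leq \tfrac{1}{n}\text{ for all }i\Bigr\} = \tfrac{1}{2n}B_{Th}:
\]
indeed, any element $-\sum_i\lambda_i\bar{e}_i$ with $\lambda_i\geq 0$ and $\sum_i\lambda_i=1$ has $j$-th diagonal entry $\frac{1}{n}-\lambda_j\leq\frac{1}{n}$, and conversely any $X\in\mathfrak{a}$ with all $x_j\leq\frac{1}{n}$ arises by setting $\lambda_j=\frac{1}{n}-x_j\geq 0$.

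Finally, I would invoke Theorem \ref{hswmainthm} applied to $\Pi$, combined with the observation that horofunction compactifications are invariant under positive scalar multiplication of the metric (rescaling $d$ by $c>0$ rescales horofunctions by $c$, and multiplication by $c$ is a $G$-equivariant self-homeomorphism of $\tilde{C}(X)$ that sends the embedded image of $X$ to itself and hence its closure to itself). The HSW recipe gives a polyhedral Finsler metric on $\mathcal{T}(n)$ whose unit ball in $\mathfrak{a}$ is $-\conv(\bar{e}_1,\ldots,\bar{e}_n)=(2n)^{-1}B_{Th}$, i.e.\ precisely the metric $2n\cdot d_{Th}$, so $\overline{\mathcal{T}(n)}^{hor}_{d_{Th}}\cong_G\overline{\mathcal{T}(n)}^S_{\Pi}$ follows. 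The main subtlety is merely bookkeeping: correctly realizing the weights as elements of $\mathfrak{a}$ via the trace form and tracking the positive scalar between the natural normalization of $d_{Th}$ and the HSW recipe; the polytope identity itself is an elementary one-line calculation.
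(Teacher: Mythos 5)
Your computations are all correct, and the observation that horofunction compactifications are invariant under positive rescaling of the metric is a useful point that the paper leaves implicit. However, there is a genuine conceptual gap in how you apply Theorem \ref{hswmainthm}. You take the unit ball in $\mathfrak{a}$ to be $-\conv(\bar e_1,\dots,\bar e_n)$, i.e.\ the negated convex hull of the weights, whereas the HSW theorem prescribes the \emph{polar dual} of the weight polytope: one sets $D:=\conv(\mu_1,\dots,\mu_k)$ and then takes $B=-D^\circ$, where $D^\circ=\{y:\langle\mu_i,y\rangle\geq-1\ \forall i\}$. (The displayed equation in the paper's statement of the theorem is misleadingly formatted, which may well have led you astray; but the paper's own proof, and the proof of Proposition \ref{teichmullerboundary}, both explicitly compute the dual.) For the standard representation you get away with this because $\conv(\bar e_1,\dots,\bar e_n)$ is a regular simplex centered at the origin, and such a simplex is self-dual up to a positive scalar. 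Concretely, $D^\circ=nD$ here, so $-D^\circ$, $-D$, and $B_{Th}$ all agree up to scaling. But this is an accident of the representation: for $\Pi\oplus\Pi^*$ in Proposition \ref{teichmullerboundary}, $\conv(\text{weights})$ is a cross-polytope while $-D^\circ$ is a cube, and only the latter matches the Teichm\"uller unit ball $\{x:|x_i|\leq 2\}$. Your method would produce the wrong polytope there. To repair your argument, replace the middle step with the computation of $-D^\circ$: since $\langle\bar e_i,y\rangle=y_i$ on the hyperplane $\sum_j y_j=0$, the dual is $D^\circ=\{y\in\mathfrak{a}:y_i\geq-1\ \forall i\}$, so $-D^\circ=\{y\in\mathfrak{a}:y_i\leq 1\ \forall i\}=\tfrac12 B_{Th}$, and the rest of your scaling argument goes through unchanged.
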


\begin{proof}
The weights of the standard representation are simply the standard basis for $e_i$, $i=1,\ldots,n$, for $\R^n$. Projecting them onto the hyperplane in $\R^n$ corresponding to $\mathfrak{a}$, the set of weights may be given by:
$$
\mu_i:=e_i - \sum_{j=1}^n\frac{1}{n}e_j,\ i=1,\ldots,n.
$$ 
Following \cite{hsw}, consider the convex hull $D:=\text{conv}(\mu_1,\ldots,\mu_n).$ This lies within the codimension 1 hyperplane $\sum_ix_i=0$ in $\R^n$. In order to utilize Theorem \ref{hswmainthm}, we now compute the negative of the dual polytope of $D$. If $\{a_1,\ldots,a_k\}\subseteq \R^n$ are the vertices of a convex polytope, then the dual polytope is given by:
$$
\{y\in \R^n : \langle a_i, y\rangle \geq-1\ \forall i\}.
$$
The extremal points are those where equality holds. By symmetry, the $\mu_i$'s are extremal points for the convex hull $D$, and the dual must live in the same hyperplane, so this becomes:
$$
B_0:= -D^{\circ} = -\{(y_1,\ldots,y_n)\in \R^n : y_1+\cdots+y_n=0,\ y_i - \frac{1}{n}\sum_jy_j\geq -1\ \forall i\} $$ $$= \{(y_1,\ldots,y_n)\in \R^n : y_1+\cdots+y_n=0,\ y_i \leq 1\ \forall i\} 
$$
By Theorem \ref{hswmainthm}, this is a unit ball for a polyhedral Finsler metric whose horofunction compactification is the Satake compactification of the standard representation. 

To complete the proof, we compute the unit ball of the Finsler metric $d_{Th}$ in the Cartan subalgebra. Using the formula in Proposition \ref{finslermetricformula}, this is relatively straightforward:
$$
B = \{(y_1,\ldots,y_n)\in\R^n : y_1+\cdots+y_n=0,\ y_i \leq 2\ \forall i\}
$$
Because $B_0=B$ up to scaling, we are done.
\end{proof}

\subsection*{The Teichm\"uller metric} We have a similar result for $d_{Teich}$.

\begin{prop}
\label{teichmullerboundary}
Let $\Pi$ be the standard representation of $G=\SL(n,\R)$. Then the following compactifications are $G$-isomorphic:
$$
\overline{\mathcal{T}(n)}^{hor}_{d_{Teich}} \cong_G \overline{\mathcal{T}(n)}^S_{\Pi\oplus\Pi^*}
$$
\end{prop}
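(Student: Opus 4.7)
The plan is to mirror the argument given for Proposition \ref{thurstonhoroboundary}: we will identify the weights of $\Pi\oplus\Pi^*$, compute (the negative of) the dual polytope of their convex hull in the Cartan subalgebra, compare it with the unit ball of the Finsler metric associated to $d_{Teich}$, and then invoke Theorem \ref{hswmainthm}.

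First I would assemble the weights. As recorded in the proof of Proposition \ref{thurstonhoroboundary}, the weights of the standard representation $\Pi$ projected onto the Cartan subalgebra $\mathfrak{a}=\{y\in\R^n:\sum_i y_i=0\}$ are $\mu_i=e_i-\tfrac1n\sum_j e_j$ for $i=1,\ldots,n$. Since the dual representation is defined by $\Pi^*(g)=(g^{-1})^T$, its weights on $\mathfrak{a}$ are $-\mu_i$. Hence the weight set of $\Pi\oplus\Pi^*$ is $\{\pm\mu_1,\ldots,\pm\mu_n\}$, and $D=\operatorname{conv}(\pm\mu_1,\ldots,\pm\mu_n)$ is centrally symmetric.

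Next I would compute the polar dual $D^\circ\subset\mathfrak{a}$. Because $D=-D$, we have $-D^\circ=D^\circ$, and
\[
D^\circ=\{y\in\mathfrak{a}:|\langle\mu_i,y\rangle|\leq 1\ \forall i\}=\Bigl\{(y_1,\ldots,y_n):\sum_i y_i=0,\ |y_i|\leq 1\ \forall i\Bigr\},
\]
where the last equality uses $\langle\mu_i,y\rangle=y_i-\tfrac1n\sum_j y_j=y_i$ on $\mathfrak{a}$. In parallel, I would compute the unit ball of $d_{Teich}$ in $\mathfrak{a}$ at the basepoint $I$. By Corollary \ref{teichfmla} (equivalently the Finsler norm in Proposition \ref{hilbertthurstonmetric}), for a diagonal $A=\operatorname{diag}(y_1,\ldots,y_n)\in\mathfrak{a}$ one has $|A|_{Teich(I)}=\tfrac12\max_i|y_i|$, so the unit ball is
\[
B=\Bigl\{(y_1,\ldots,y_n):\sum_i y_i=0,\ |y_i|\leq 2\ \forall i\Bigr\}=2\cdot(-D^\circ).
\]

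Since a Finsler metric is determined by its unit ball up to this scalar, $d_{Teich}$ is (up to a constant) the $G$-invariant polyhedral Finsler metric associated to $-D^\circ$ via Theorem \ref{finslerballs}. Theorem \ref{hswmainthm} then identifies its horofunction compactification with the generalized Satake compactification $\overline{\mathcal{T}(n)}^S_{\Pi\oplus\Pi^*}$, and the $G$-equivariance is automatic from the construction. The only place where care is needed is checking that $\Pi\oplus\Pi^*$ is a projective faithful representation (so that Theorem \ref{hswmainthm} applies) and that the $W$-invariance and central symmetry of $B$ are used correctly to extend the flat computation to all of $X=G/K$; these are the sole potential obstacles, and both are straightforward since $\Pi$ is already faithful and $B$ is visibly permutation-invariant.
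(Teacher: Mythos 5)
Your proof is correct and follows essentially the same route as the paper: identify the weights of $\Pi\oplus\Pi^*$ as $\{\pm\mu_i\}$, compute $-D^\circ=\{y\in\mathfrak{a}:|y_i|\leq1\}$, match it (up to scaling) with the Finsler unit ball of $d_{Teich}$, and invoke Theorem \ref{hswmainthm}. The only difference is cosmetic — your observation that $D=-D$ makes $-D^\circ=D^\circ$ and lets you compress what the paper does via Lemma \ref{dualball} into a single line — and your closing remarks about projective faithfulness and $W$-invariance are reasonable sanity checks that the paper handles implicitly.
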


\begin{proof}
Consider the faithful representation $$\Pi\oplus\Pi^*:\SL(n,\R)\hookrightarrow \SL(2n,\C),$$ using the standard and dual representations as a block diagonal acting on the direct sum of the vector spaces.  

The collection of weights, viewed as elements of $\R^n$, is the union of the weights for the standard and dual representations. We project them onto the hyperplane $P\subseteq\R^n$ defined by $\sum_iy_i=0$ to obtain the weights in $\mathfrak{a}$. After projection, two of the weights are given by $$a_1:=\bigg(\frac{n-1}{n}, -\frac{1}{n}, \ldots, -\frac{1}{n}\bigg),\ b_1=\bigg(-\frac{n-1}{n}, \frac{1}{n},\ldots, \frac{1}{n}\bigg),$$ and the others are similar, with $\pm(1-1/n)$ in the $i$th component and $\mp1/n$ in the remaining components. We consider the convex hull $D$ of these points. This defines a polyhedron in $\R^n$, of which we compute the negative of the dual. 

\begin{lem}
\label{dualball}
The negative of the dual to the polyhedron $D=\conv(a_1,\ldots,a_n,b_1,\ldots,b_n)$ is given by:
$$
-D^{\circ} = \{(y_1,\ldots,y_n)\in\R^n\ :\ \sum_iy_i=0,\ |y_i|\leq1\ \forall i=1,\ldots,n\}
$$
\end{lem}
We prove this lemma below. Now, using Equation \ref{finslerequation} for the Finsler metric associated of $d_{Teich}$, we see that the ball $-D^{\circ}$ is, up to a choice of scaling, the same as the unit ball for the Teichm\"uller metric. Theorem \ref{hswmainthm} completes the proof. 
\end{proof}

\begin{proof}[Proof of Lemma \ref{dualball}]
Since all points $a_1,\ldots, b_n$ lie in the hyperplane $\sum_iy_i=0$, the dual polyhedron must as well. Now, choose some $i\in\{1,\ldots,n\}$ and consider the condition $\langle (y_1,\ldots,y_n)|a_i\rangle\geq-1$. Expanding, this becomes:
$$
-\frac{1}{n}(y_1+\cdots+y_n)+y_i\geq-1
$$
But since $\sum_iy_i=0$, this simplifies to $y_i\geq-1$. For $b_i$, we obtain $1\geq y_i$. 
\end{proof}

\section{The Thurston compactification of $\mathcal{T}(n)$}
\label{thurstoncompact}

Inspired by Thurston's compactification for Teichm\"uller spaces of hyperbolic surfaces using projective measured laminations on the underlying surfaces, we define a natural Thurston-type compactification of $\mathcal{T}(n)$. It is closely related to T. Haettel's compactification of $\SL(n,\R)/\SO(n)$ built from the closure of a projective embedding into $\mathbb{P}(\R^{\Z^n}_+)$ in \cite{haettel}, but we provide a new construction utilizing a geometric interpretation of quadratic forms.

Recall the Satake compactification of $\SL(n,\R)/\SO(n)$ with respect to the standard representation of $\SL(n,\R)$, whose boundary points correspond to projective classes of positive-semidefinite matrices. After relating this compactification to the Thurston compactification, we have a geometric interpretation of the Satake compactification $\overline{\mathcal{T}(n)}^S_{\pi}$. Recall (see \cite{fathi}) that a measured foliation on a surface is a (singular) foliation with an arc measure in the transverse direction that is invariant under holonomy (translations along leaves). 

\begin{defn}
\label{mff}
A \emph{measured flat foliation} on $\R^n/\Z^n$ is a non-singular measured foliation $(F,\mu)$ with the following requirements:
\begin{itemize}
\item The leaves of $F$ are given by parallel hyperplanes.
\item The measure $\mu$ is invariant under isometries of the torus.
\item In the lift to $\R^n$, if $V_0$ is the leaf containing the origin, then there exists an orthogonal decomposition $$V_0^{\perp} = V_1\oplus\cdots\oplus V_k$$ and positive constants $\lambda_i$, $i=1,\ldots,k$, such that the lift of an arc $\gamma$ contained in subspace $V_i$ has measure $\mu(\gamma) = \lambda_i \ell_I(\gamma)$, where $\ell_I$ is the Euclidean length.
\end{itemize}
\end{defn}

This is a higher-dimensional analog of measured foliations for surfaces where the leaves are totally geodesic submanifolds. Invariance under isometries implies that we may assume any arc to be measured has a lift that begins at the origin in $\R^n$. There is an obvious action by $\R^+$ on the set of measured flat foliations by scaling the measure. Denote the set of projective classes of measured flat foliations by $\mathcal{PMFF}$.

\begin{lem}
\label{measuredflat}
The collection $\mathcal{PMFF}$ is in a natural one-to-one correspondence with the boundary of the minimal Satake compactification associated to the standard representation. 
\end{lem}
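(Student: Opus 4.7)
The plan is to produce an explicit bijection by matching the spectral data of a positive semi-definite symmetric matrix with the data defining a measured flat foliation. First, I would invoke the standard description of the boundary of $\overline{\mathcal{T}(n)}^S_{\Pi}$: under the Satake embedding $gK \mapsto gg^T$ into $\mathbb{P}(\mathcal{H}_n)$, the interior is exactly the projective class of positive-definite symmetric matrices, and the boundary consists precisely of projective classes $[A]$ of nonzero positive semi-definite symmetric $n \times n$ matrices of rank strictly less than $n$; see for instance \cite[Proposition I.4.35]{borel}.

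Given such a boundary point $[A]$, the spectral theorem produces an orthogonal decomposition $\R^n = V_0 \oplus V_1 \oplus \cdots \oplus V_k$, where $V_0 := \ker A$ is nonzero and $V_1, \ldots, V_k$ are the eigenspaces corresponding to the distinct positive eigenvalues $\lambda_1, \ldots, \lambda_k$. I would then interpret this data as a measured flat foliation on $\R^n/\Z^n$ in the sense of Definition \ref{mff}: take the leaves to be (the images in the quotient of) the affine translates of $V_0$, let $V_0^\perp = V_1 \oplus \cdots \oplus V_k$ be the prescribed transverse decomposition, and let $\mu$ assign to an arc contained in $V_i$ the measure $\lambda_i \ell_I(\gamma)$. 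Isometry-invariance is automatic from the translation-invariance of this construction. Conversely, from a measured flat foliation with data $(V_0; V_1, \ldots, V_k; \lambda_1, \ldots, \lambda_k)$, I would assemble $A := \sum_{i=1}^{k} \lambda_i P_{V_i}$, where $P_{V_i}$ is the orthogonal projection onto $V_i$; this matrix is positive semi-definite of rank $n - \dim V_0 < n$, so $[A]$ is a boundary point.

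These two constructions are patently inverse to one another, and rescaling the measure by $c > 0$ scales the associated $A$ by $c$, so they descend to a bijection on projective classes. The naturality (in the sense of being compatible with the $\SL(n,\R)$-action via $g \cdot A = gAg^T$ on matrices and via pullback of foliations on the torus) can be checked directly and will be needed in the next theorem, but it is not required for the statement of this lemma.

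The only real obstacle I foresee is checking that Definition \ref{mff} captures exactly the spectral data without redundancy, so that both directions of the assignment are well-defined. Concretely, I would need to verify three points: (i) the orthogonal decomposition $V_0^\perp = V_1 \oplus \cdots \oplus V_k$ is uniquely recoverable from $(F, \mu)$, which follows from characterizing the $V_i$ as the maximal subspaces of $V_0^\perp$ on which $\mu$ agrees with a positive constant multiple of Euclidean arclength; (ii) the values prescribed on arcs inside the $V_i$, combined with translation-invariance, extend coherently to a holonomy-invariant transverse measure; and (iii) arcs inside a single leaf have zero transverse measure, forcing $V_0$ to be recognizable as the kernel direction. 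Once these three verifications are in place, the bijection follows immediately.
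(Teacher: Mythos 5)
Your proposal takes essentially the same route as the paper: spectrally decompose a boundary representative, let $\ker$ give the leaf direction and the positive eigenspaces give the transverse decomposition, then run the dictionary in reverse. The three well-definedness checks you flag at the end (recoverability of the $V_i$, coherent extension to a holonomy-invariant measure, vanishing on leaves) are exactly what makes Definition~\ref{mff} a faithful encoding, and the paper leaves them implicit. One small point where your version is actually cleaner: you assign to an arc in $V_i$ the measure $\lambda_i\,\ell_I(\gamma)$ and in the reverse direction build $A=\sum_i\lambda_i P_{V_i}$, which are manifestly inverse; the paper instead defines the measure in the forward direction as $\ell_Q$-arclength (which scales unit segments by $\sqrt{\lambda_i}$, not $\lambda_i$) but in the reverse direction reads off the unit-segment measures directly as the diagonal entries of $D$, so its two constructions differ by a square root. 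Since $Q\mapsto Q^{1/2}$ is a bijection on positive semi-definite matrices preserving kernels and projective classes of the spectral data, the bijection still exists either way, but your normalization is the self-consistent one.
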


\begin{proof}

Let $Q$ be a matrix representative of the class $[Q]\in\partial\overline{\mathcal{T}(n)}^S_{\pi}$. Define the leaves of a foliation of $\R^n$ by all parallel translations of $\ker(Q)$. This descends to the quotient $\mathbb{R}^n/\Z^n$. Arc length with respect to $Q$ defines a transverse measure, which for an arc $\gamma:[0,1]\to\R^n/\Z^n$ is given by $$\ell_Q(\gamma) = \int_0^1\sqrt{\langle \gamma'(t) Q,\gamma'(t)\rangle} dt.$$ Because the quadratic form $Q$ is constant across $\R^n/\Z^n$ and diagonalizable, the measure satisfies the conditions in Definition \ref{mff}.  

In this way, $Q$ endows $\R^n/\Z^n$ with a measured foliation. Taking the projective class gives us the projective measured flat foliation associated to $[Q]$. 

Conversely, given $(F,[\mu])\in\mathcal{PMFF}$, we can obtain the associated $[Q]\in \partial\overline{\mathcal{T}(n)}_{\pi}^S$ as follows. Take any representative $(F,\mu)$ of the projective class. Then: 
\begin{enumerate}
\item Lift the measured foliation to $\R^n$
\item Let $v_1,\ldots,v_m$ be an orthonormal basis of the subspace $V_0$ spanned by the leaf through the origin
\item For each subspace $V_j$ in the direct sum $V_0^{\perp} = V_1\oplus\cdots\oplus V_k$ from Definition \ref{mff}, choose an orthonormal basis. Label these vectors $v_{m+1},\ldots,v_n$
\item Let $\lambda_i$ be the measure of a straight line segment of Euclidean length 1 extending from the origin in the direction of $v_i$ for $i=1,\ldots,n$
\item Let $P$ be the matrix whose columns are $v_i$ for $i=1,\ldots,n$ and let $D$ be the diagonal matrix whose diagonal entries are $\lambda_i$ for $i=1,\ldots,n$. 
\item Let $Q=P^{-1}DP$. This is a positive-semidefinite symmetric matrix which induces the same measured foliation we began with.
\end{enumerate}

Taking the projective class of the matrix gives us the associated element of the Satake compactification. This establishes maps in both directions which are inverses, as required. 
\end{proof} 

The viewpoint of Lemma \ref{measuredflat} gives a geometric way to interpret quadratic forms as measured foliations. Next, we will give the collection $\mathcal{T}(n)\cup\mathcal{PMFF}$ a topology. We do so by giving a notion of convergence to points of $\mathcal{PMFF}$ by sequences of points in $\mathcal{T}(n)=\SL(n,\R)/\SO(n)$. Let $(F,[\mu])\in\mathcal{PMFF},$ where $F$ is the foliation of $\mathcal{T}^n$ and $[\mu]$ is the projective class of the transverse measure. Let $(X_i)_{i\in\N}$ be a sequence of elements of $\mathcal{T}(n)$. 

\begin{defn}
\label{thurstonconvergence}
We say the sequence $(X_i)_{i\in\N}$ \emph{converges} to $(F,\mu)$ if for 
\begin{equation}
\label{convcoeff}
r_i = 1/\max\{\lambda\ :\ \lambda\text{ is an eigenvalue of }X_i\}
\end{equation}
the following holds: there exists a representative $\mu_0\in[\mu]$ such that for all simple closed curves $\gamma\subseteq\mathcal{T}^n$, we have $$\ell_{r_iX_i}(\gamma)\xrightarrow{i\to\infty}\mu_0(\gamma)$$ where $\ell_Q(\gamma)$ denotes the length of the curve $\gamma$ with the metric $Q$. 
\end{defn}

\begin{rem}
Convergence to points of $\mathcal{PMFF}$ may also be viewed more geometrically: we could also define convergence to $\mathcal{PMFF}$ by requiring that the Hausdorff distance between unit balls goes to 0. This is essentially convergence of metrics while allowing some directions to degenerate. 
\end{rem}

\begin{lem}
\label{iscompact}
The collection $\mathcal{T}(n)\cup\mathcal{PMFF}$ is compact.
\end{lem}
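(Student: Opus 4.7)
The approach is to identify $\mathcal{T}(n)\cup\mathcal{PMFF}$ with the minimal Satake compactification $\overline{\mathcal{T}(n)}^S_\Pi$ via Lemma \ref{measuredflat}, and then inherit compactness. The Satake compactification is a closed subset of the compact projective space $\mathbb{P}(\mathcal{H}_n)$, so it is automatically compact; the content of the lemma is showing that the topology specified by Definition \ref{thurstonconvergence} is compatible with this identification.

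First I would exhibit the bijection $\Phi:\mathcal{T}(n)\cup\mathcal{PMFF}\to\overline{\mathcal{T}(n)}^S_\Pi$ that is the identity on $\mathcal{T}(n)\cong\mathcal{P}_n$ and, on the boundary, sends $(F,[\mu])$ to the projective class $[Y]$ of the positive-semidefinite matrix produced by the explicit reconstruction in the proof of Lemma \ref{measuredflat}. Next I would verify that Definition \ref{thurstonconvergence} coincides with the pullback of Satake (i.e.\ projective matrix) convergence. Given a sequence $(X_i)\subset\mathcal{P}_n$, the normalization $r_i=1/\lambda_{\max}(X_i)$ places each $r_iX_i$ into the compact set of positive-semidefinite symmetric matrices with largest eigenvalue $1$, so some subsequence satisfies $r_{i_k}X_{i_k}\to Y$. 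Two cases arise: if $\det Y>0$, then $r_{i_k}=\det(r_{i_k}X_{i_k})^{1/n}\to(\det Y)^{1/n}>0$, forcing $X_{i_k}\to(\det Y)^{-1/n}Y\in\mathcal{P}_n=\mathcal{T}(n)$; if $\det Y=0$, then $Y$ is rank-deficient and, via Lemma \ref{measuredflat}, corresponds to a unique $(F,[\mu])\in\mathcal{PMFF}$ with representative $\mu_0$ satisfying $\mu_0(\gamma)=\ell_Y(\gamma)$. In both cases the matrix convergence $r_{i_k}X_{i_k}\to Y$, combined with the continuity of $Q\mapsto \sqrt{v^TQv}$, gives $\ell_{r_{i_k}X_{i_k}}(\gamma)\to\ell_Y(\gamma)$ for every simple closed curve $\gamma\subset\mathbb{T}^n$ with homology class $v\in\Z^n$, which is exactly the condition of Definition \ref{thurstonconvergence}. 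The converse direction (Thurston-style convergence implies matrix/projective convergence) follows because the values $v^TQv$ for $v\in\Z^n$, together with the polarization identity, determine $Q$ completely.

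With this equivalence of convergence notions in hand, sequential compactness of $\mathcal{T}(n)\cup\mathcal{PMFF}$ follows from sequential compactness of $\overline{\mathcal{T}(n)}^S_\Pi$: any sequence in $\mathcal{T}(n)\cup\mathcal{PMFF}$ has, after applying $\Phi$, a subsequence converging in the Satake compactification, which pulls back to a convergent subsequence on our side. The main technical obstacle is the compatibility of convergence notions just described, and in particular verifying that testing against all primitive vectors $v\in\Z^n$ really does pin down the rescaled limit matrix; everything else is routine from compactness of closed bounded sets of symmetric matrices and the known structure of the Satake embedding via the standard representation $\Pi$.
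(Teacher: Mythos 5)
Your proposal is correct and follows essentially the same approach as the paper: both hinge on the normalization $r_i X_i$ landing in the compact set of positive-semidefinite matrices with maximal eigenvalue at most $1$, extracting a convergent subsequence, and identifying the limit with an element of $\mathcal{PMFF}$ via Lemma \ref{measuredflat}. The only real presentational difference is that you front-load the equivalence between Thurston-style convergence and projective matrix convergence (content the paper defers to Lemma \ref{convergenceequiv} and Corollary \ref{thurstonhomeo}), while the paper proves compactness directly and handles sequences containing boundary terms with a short diagonal argument instead of routing everything through the Satake compactification.
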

\begin{proof}
We show that every sequence has a convergent subsequence. First, suppose $(X_i)_{i\in\N}$ consists only of elements of $\mathcal{T}(n)$, but no subsequence converges to a point of $\mathcal{T}(n)$. Consider then the sequence of matrices $r_iX_i$, where $r_i$ is defined in Equation \ref{convcoeff}. Now, the set of positive-definite symmetric matrices with eigenvalues bounded above by 1 is compact, so we may assume $r_iX_i$ converges to a positive-semidefinite matrix $M$. By Lemma \ref{measuredflat} and by construction, $M$ corresponds to an element of $\mathcal{PMFF}$ which satisfies the conditions of Definition \ref{thurstonconvergence}.

Now suppose that some $X_k\in\mathcal{PMFF}$ for some (perhaps infinitely many) $k\in\N$. Pick a sequence $(Y^k_j)_{j\in\N}\in\mathcal{T}(n)$ which converges to $X_k$. Then replace $X_k$ with $Y^k_k$ in the sequence $(X_i)_{i\in\N}$, and use the first case to find a limit for the new sequence. The original sequence also must converge to this same limit. 
\end{proof}

We are now prepared to make the following definition.

\begin{defn}
The \emph{Thurston compactification} of $\mathcal{T}(n)$ is $$\overline{\mathcal{T}(n)}^{Th}:=\mathcal{T}(n)\cup\mathcal{PMFF}.$$
\end{defn}

By Lemmas \ref{measuredflat} and \ref{iscompact}, we see that the Thurston compactification $\overline{\mathcal{T}(n)}^{Th}$ is a compactification of $\mathcal{T}(n)$ built from measured foliations on the underlying structures, as in Thurston's compactification for hyperbolic surfaces.

\begin{lem}
\label{convergenceequiv}
Let $(F,[\mu])\in\mathcal{PMFF}$, and let $[Q]\in\partial\overline{\mathcal{T}(n)}^{Th}$ be the quadratic form associated to $(F,[\mu])$. For a sequence $(X_i)_{i\in\N}\in\mathcal{T}(n)$, we have $$(X_i)_{i\in\N}\xrightarrow{i\to\infty}(F,[\mu]) \text{ if and only if } (X_i)_{i\in\N}\xrightarrow{i\to\infty} [Q]$$ where on the right-hand side the convergence is with respect to the topology on the Satake compactification. 
\end{lem}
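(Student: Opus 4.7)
The plan is to translate both notions of convergence into equivalent statements about the sequence of matrices $r_i X_i$ and the limit matrix $Q$, and then establish the equivalence via linear-algebraic arguments about quadratic forms. As a preliminary step, fix the representative $Q$ of its projective class so that $\lambda_{\max}(Q)=1$, matching the normalization $\lambda_{\max}(r_i X_i)=1$. With this choice, Satake convergence $[X_i]\to[Q]$ in $\mathbb{P}(\mathcal{H}_n)$ is equivalent to matrix convergence $r_i X_i \to Q$ in $M_n(\R)$.

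The key observation is that a primitive $v\in\Z^n$ corresponds to a simple closed geodesic $\gamma_v$ on $\R^n/\Z^n$ whose length in the (possibly degenerate) flat metric associated to a symmetric positive semidefinite matrix $M$ equals $\sqrt{v^T M v}$. Applying this to $M=r_iX_i$ and, via Lemma \ref{measuredflat}, to $M=Q$, one sees that $\mu_0(\gamma_v)=\sqrt{v^T Q v}$. Thus the convergence requirement in Definition \ref{thurstonconvergence} becomes
\[
\sqrt{v^T r_i X_i v}\xrightarrow{i\to\infty}\sqrt{v^T Q v}\quad\text{for every primitive }v\in\Z^n,
\]
or, upon squaring and noting that non-primitive lattice vectors are scalar multiples of primitive ones, $v^T r_i X_i v\to v^T Q v$ for every $v\in\Z^n$.

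The forward direction is then immediate: entrywise matrix convergence $r_i X_i\to Q$ forces continuity of the quadratic forms, yielding the desired length convergence. For the reverse direction, apply the polarization identity $2w^T M v=(v+w)^T M(v+w)-v^T M v-w^T M w$ to deduce from pointwise convergence of the quadratic forms on $\Z^n$ the pointwise convergence of the bilinear forms $w^T r_i X_i v\to w^T Q v$ on $\Z^n\times\Z^n$. Taking $v=e_j$, $w=e_k$ recovers entrywise convergence of the matrices, which is Satake convergence.

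The main subtlety, rather than any hard calculation, will be carefully justifying that the specific normalization $r_i=1/\lambda_{\max}(X_i)$ used in Definition \ref{thurstonconvergence} is compatible with convergence of projective classes in $\mathbb{P}(\mathcal{H}_n)$. Once one fixes the representative $Q$ with $\lambda_{\max}(Q)=1$, continuity of $\lambda_{\max}$ on the positive semidefinite cone ensures that projective convergence $[X_i]\to[Q]$ and normalized matrix convergence $r_i X_i\to Q$ coincide, after which the proof reduces to the quadratic-form argument above.
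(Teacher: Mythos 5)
Your proposal is correct and follows essentially the same route as the paper: both reduce Satake convergence to matrix convergence $r_iX_i\to Q$ under the eigenvalue normalization, and then identify $\ell_Q$ with the transverse measure $\mu_0$ via Lemma~\ref{measuredflat}. The paper dispatches the reverse implication with ``the reverse implication is nearly identical,'' whereas you supply the polarization identity to pass from convergence of the quadratic forms $v^T r_i X_i v$ on primitive lattice vectors back to entrywise convergence of matrices --- a step that is genuinely needed and is the one place where the two directions are not symmetric, so your version is a bit more complete on that point.
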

\begin{proof}
Notice that convergence on the right-hand side is equivalent to the following: if $1/r_i$ is the maximal eigenvalue of $X_i$ for each $i$, then $$r_iX_i\xrightarrow{i\to\infty}Q$$ for some representative $Q\in[Q]$ as matrices. Let $\mu_0$ be the representative of $[\mu]$ associated to the semidefinite form $Q$. Then $\ell_Q(\gamma) = \mu_0(\gamma)$ for all simple closed curves $\gamma$, and so from Lemma \ref{measuredflat} we have $$X_i\xrightarrow{i\to\infty}(F,[\mu]).$$ The reverse implication is nearly identical.
\end{proof}

Immediately following from Lemmas \ref{measuredflat} and \ref{convergenceequiv} is the following:

\begin{cor}
\label{thurstonhomeo}
The identity map on $\mathcal{T}(n)$ extends to a homeomorphism $$\overline{\mathcal{T}(n)}^{Th} \cong \overline{\mathcal{T}(n)}^S_{\pi}.$$
\end{cor}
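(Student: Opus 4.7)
The plan is to exhibit an explicit extension $\Phi\colon \overline{\mathcal{T}(n)}^{Th}\to \overline{\mathcal{T}(n)}^S_{\pi}$ of the identity and show it is a homeomorphism by verifying that it is a continuous bijection between compact Hausdorff spaces. Define $\Phi$ to be the identity on $\mathcal{T}(n)$ and, on the boundary $\mathcal{PMFF}$, to be the bijection $(F,[\mu])\mapsto [Q]$ furnished by Lemma \ref{measuredflat}. Since that lemma gives a bijection between boundaries and both compactifications have the same interior $\mathcal{T}(n)$, $\Phi$ is a bijection.

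Next I would establish the topological hypotheses needed to upgrade a continuous bijection to a homeomorphism. Compactness of $\overline{\mathcal{T}(n)}^{Th}$ is exactly Lemma \ref{iscompact}, and compactness of $\overline{\mathcal{T}(n)}^S_{\pi}$ is standard. For Hausdorffness of $\overline{\mathcal{T}(n)}^{Th}$, one notes that a projective measured flat foliation is determined by the lengths $\mu(\gamma)$ it assigns to simple closed curves $\gamma\subset\mathbb{T}^n$ (via the correspondence of Lemma \ref{measuredflat} with semidefinite quadratic forms, which are determined by their values on $\mathbb{Z}^n\setminus\{0\}$), so limits under Definition \ref{thurstonconvergence} are unique and the space is Hausdorff; the Satake side is metrizable and thus Hausdorff.

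For continuity of $\Phi$, the work is entirely at boundary points. At an interior point continuity is trivial. If a sequence $(X_i)\subset \mathcal{T}(n)$ converges to $(F,[\mu])\in\mathcal{PMFF}$, then Lemma \ref{convergenceequiv} says precisely that $(X_i)$ converges to $[Q]=\Phi(F,[\mu])$ in the Satake topology, which gives sequential continuity along interior sequences. For a sequence involving boundary points, I would apply the diagonal trick from the proof of Lemma \ref{iscompact}: replace each boundary entry $X_k\in\mathcal{PMFF}$ by a nearby interior approximation $Y_k^k$ with $Y_k^k\to X_k$, reducing to the interior case. Since both topologies are first countable, sequential continuity suffices.

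The main obstacle I anticipate is not a substantial one but a bookkeeping issue: making precise the topology on $\overline{\mathcal{T}(n)}^{Th}$ (the text gives only a convergence notion via Definition \ref{thurstonconvergence}) and checking that it is first countable and Hausdorff, so that the sequential argument is genuinely enough. Once this is in place, the conclusion is immediate from the standard fact that a continuous bijection between compact Hausdorff spaces is a homeomorphism, with the bijection $\Phi$ constructed above as witness.
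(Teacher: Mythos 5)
Your proposal is correct and follows essentially the same route as the paper's one-line proof, which simply observes that Lemma \ref{convergenceequiv} shows the bijection from Lemma \ref{measuredflat} preserves convergence in both directions. You flesh out the topological groundwork (compactness via Lemma \ref{iscompact}, Hausdorffness, first countability) and close with the compact--Hausdorff criterion rather than appealing to the two-directional convergence directly, but the key lemmas and the overall structure are the same.
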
 
\begin{proof}
Lemma \ref{convergenceequiv} shows that the bijection from Lemma \ref{measuredflat} preserves convergence in both directions.
\end{proof}

Next, we endow $\mathcal{PMFF}$ with a $\SL(n,\R)$-action. For $g\in\SL(n,\R)$, define:
$$
g\cdot(F,[\mu]) = (Fg, [g^{-1}*\mu]).
$$
One can verify that this defines a $\SL(n,\R)$-action on $\mathcal{PMFF}$. 

\begin{lem}
\label{equivariantthurston}
This $\SL(n,\R)$-action is equivariant with respect to the bijection of Lemma \ref{measuredflat}.
\end{lem}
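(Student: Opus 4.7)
The strategy is to unwind the bijection of Lemma \ref{measuredflat} on both sides and then verify, by a short direct computation, that the assignment $[Q]\mapsto[gQg^T]$ on the Satake boundary agrees with $(F,[\mu])\mapsto(Fg,[g^{-1}{*}\mu])$ on $\mathcal{PMFF}$. The key inputs have already been set up in the paper: the $\SL(n,\R)$-action on metrics is $g\cdot X=gXg^T$ (Proposition \ref{slpn}), this extends to the Satake boundary since the boundary consists of limits of projective classes of such metrics, and the length transformation rule $\ell_X(\gamma g)=\ell_{gXg^T}(\gamma)$ was verified at the end of Section \ref{teichmullertori}.

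First, I would fix a representative $Q$ of a boundary class and let $V_0=\ker Q$, so that under the bijection of Lemma \ref{measuredflat} the leaf of $F$ through the origin is the parallel translation class of $V_0$, and for an arc $\gamma$ one has $\mu(\gamma)=\ell_Q(\gamma)$ (in the sense described in Definition \ref{mff}). Then I would compute the kernel and measure associated to $gQg^T$:
\[
\ker(gQg^T)=\{v\in\R^n\mid (vg)Q=0\}=V_0g^{-1},
\]
using that $g^T$ is invertible. This identifies the foliation corresponding to $[gQg^T]$ as the one whose leaves are parallel translates of $V_0g^{-1}$, which is exactly $Fg$ under the convention (consistent with the right-action used throughout Sections \ref{teichmullertori}--\ref{ntorusthurston}) that $g$ acts on row vectors by right multiplication, so that $Fg$ denotes the pullback foliation with leaf $V_0g^{-1}$ at the origin.

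Next, for the measure, applying the length transformation rule from Section \ref{teichmullertori} to the semidefinite form $Q$ (the integral formula for $\ell_X$ is still well-defined when $X$ is merely positive-semidefinite), I get
\[
\mu_{gQg^T}(\gamma)=\ell_{gQg^T}(\gamma)=\ell_Q(\gamma g)=\mu(\gamma g).
\]
On the other hand, by the standard definition of pushforward, $(g^{-1}{*}\mu)(\gamma)=\mu((g^{-1})^{-1}\gamma)=\mu(\gamma g)$, so the two projective measures coincide. Combined with the leaf computation, this shows that $g\cdot(F,[\mu])$ maps under the bijection of Lemma \ref{measuredflat} to $[gQg^T]$, which is the image of $[Q]$ under the $\SL(n,\R)$-action on the Satake boundary; equivariance follows.

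I expect the only real subtlety to be the bookkeeping of row-versus-column conventions and what precisely is meant by $Fg$ and by pushforward of a transverse measure — once those conventions are pinned down to agree with the length formula $\ell_X(\gamma g)=\ell_{gXg^T}(\gamma)$, the verification is a two-line computation. A minor secondary point is that the integral formula for $\ell_Q$ used in Lemma \ref{measuredflat} must be interpreted on the lift (so that the length of an arc in direction $v$ is $\sqrt{v^TQv}\,|v|^{-1}$ times its Euclidean length, vanishing precisely on $\ker Q$); this is exactly the interpretation used in Definition \ref{mff}, so no new argument is needed there.
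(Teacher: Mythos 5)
Your proof is correct and takes essentially the same approach as the paper: both identify the boundary foliation via the kernel of the form and use the length transformation rule $\ell_{g\cdot Q}(\gamma)=\ell_Q(g\cdot\gamma)$ together with $\ell_Q=\mu_0$ to show the transverse measure transforms as $g^{-1}*\mu$. You are slightly more explicit about the kernel computation, which the paper handles with a one-line remark, but the substance of the argument is the same.
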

\begin{proof}
Recall from Lemma \ref{measuredflat} that for any smooth arc $\gamma$, if $(F,\mu_0)$ is a representative of the projective class of $(F,[\mu])$ associated to $Q\in[Q]$, then $$\ell_{Q}(\gamma) = \mu_0(\gamma).$$ Now, for $g\in\SL(n,\R)$ we have $$\ell_{g\cdot Q}(\gamma) = \ell_{Q}(g\cdot\gamma) = \mu_0(g\cdot\gamma) = g^{-1}*\mu_0(\gamma).$$ Finally, if $\gamma$ is a curve contained in a single leaf, then $g\cdot\gamma = \gamma g$ is then contained in a leaf of $g\cdot F = Fg$. 
\end{proof}

Combining Lemma \ref{equivariantthurston} and Corollary \ref{thurstonhomeo}, we arrive at:

\begin{thm}
\label{thurstonGisom}
The Thurston compactification $\overline{\mathcal{T}(n)}^{Th}$ is $\SL(n,\R)$-isomorphic to the Satake compactification with respect to the standard representation $\overline{\mathcal{T}(n)}^S_{\pi}$. 
\end{thm}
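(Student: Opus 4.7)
The plan is to assemble the theorem directly from the ingredients already established in Lemmas \ref{measuredflat}, \ref{iscompact}, \ref{convergenceequiv}, \ref{equivariantthurston}, and Corollary \ref{thurstonhomeo}. Essentially all the substantive work has been done; what remains is to combine a homeomorphism on the closure with an equivariance statement on the boundary and check compatibility on the interior.

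First I would invoke Corollary \ref{thurstonhomeo} to obtain the homeomorphism $\Phi:\overline{\mathcal{T}(n)}^{Th}\to \overline{\mathcal{T}(n)}^S_{\pi}$ extending the identity on $\mathcal{T}(n)$. By construction this $\Phi$ is the identity on the interior $\mathcal{T}(n)$, where $\SL(n,\R)$ acts in the same way on both sides (namely $g\cdot X = gXg^T$ on $\mathcal{P}_n$), so $\SL(n,\R)$-equivariance on the interior is automatic.

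Next I would show equivariance on the boundary. The $\SL(n,\R)$-action on $\overline{\mathcal{T}(n)}^S_{\pi}$ restricts on the boundary to the action $g\cdot [Q] = [gQg^T]$ on projective classes of positive semidefinite matrices, while the action on $\mathcal{PMFF}$ is the one defined just before Lemma \ref{equivariantthurston}. Lemma \ref{equivariantthurston} states precisely that the bijection of Lemma \ref{measuredflat} intertwines these two actions, which is exactly the equivariance of $\Phi$ on $\partial\overline{\mathcal{T}(n)}^{Th}$.

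The only remaining point is that a continuous extension of an equivariant map on a dense subset is automatically equivariant everywhere, so having checked equivariance on the interior and on the boundary (via the two lemmas), together with the continuity of the $\SL(n,\R)$-action on both compactifications, we conclude $\Phi$ is $\SL(n,\R)$-equivariant and hence a $G$-isomorphism. I do not anticipate any real obstacle here: the main conceptual work was identifying projective classes of semidefinite quadratic forms with measured flat foliations (Lemma \ref{measuredflat}) and verifying that convergence of marked tori in the Thurston sense matches projective convergence of the associated matrices (Lemma \ref{convergenceequiv}); the theorem is now a bookkeeping assembly of those pieces.
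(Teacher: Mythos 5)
Your proposal is correct and follows the same route the paper takes: it cites Corollary \ref{thurstonhomeo} for the underlying homeomorphism and Lemma \ref{equivariantthurston} for the boundary equivariance, which together with the trivial equivariance of the identity on the interior give the $G$-isomorphism. The final density-and-continuity remark is harmless but unnecessary, since you have already checked equivariance pointwise on both the interior and the boundary, which exhaust the compactification.
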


\noindent Theorem \ref{horogeomcompact} is then the combined results of Theorem \ref{thurstonGisom} and Proposition \ref{thurstonhoroboundary}.

\bibliography{mybibliography}

\begin{thebibliography}{HSWW18}

\bibitem[APS15]{papadopouloscommentary}
Vincent Alberge, Athanase Papadopoulos, and Weixu Su.
\newblock {A} commentary on {Teichm\"uller}'s paper ``{Extremale} quasikonforme
  abbildungen und quadratische differentiale".
\newblock In Athanase Papadopoulos, editor, {\em Handbook of Teichm{\"u}ller
  Theory, Volume V}, pages 485--531. European Mathematical Society Publishing
  House, 2015.

\bibitem[BH13]{bridson}
Martin~R Bridson and Andr{\'e} Haefliger.
\newblock {\em Metric spaces of non-positive curvature}, volume 319.
\newblock Springer Science \& Business Media, 2013.

\bibitem[BJ06]{borel}
Armand Borel and Lizhen Ji.
\newblock {\em Compactifications of symmetric and locally symmetric spaces}.
\newblock Mathematics: Theory \& Applications, Birk{\"a}user Boston Inc., 2006.

\bibitem[BPT05]{bpt}
Abdelhadi Belkhirat, Athanase Papadopoulos, and Marc Troyanov.
\newblock Thurston's weak metric on the {Teichm{\"u}ller} space of the torus.
\newblock {\em Transactions of the American Mathematical Society},
  357(8):3311--3324, 2005.

\bibitem[FLP12]{fathi}
Albert Fathi, Fran{\c{c}}ois Laudenbach, and Valentin Po{\'e}naru.
\newblock {\em Thurston's Work on Surfaces}, volume~48.
\newblock Princeton University Press, 2012.

\bibitem[FM11]{primer}
Benson Farb and Dan Margalit.
\newblock {\em A primer on mapping class groups (pms-49)}.
\newblock Princeton University Press, 2011.

\bibitem[FT84]{fischertromba}
Arthur Fischer and Anthony Tromba.
\newblock On the {Weil-Petersson} metric on {Teichm{\"u}ller} space.
\newblock {\em Transactions of the American Mathematical Society},
  284(1):319--335, 1984.

\bibitem[GJ17]{greenfield}
Mark Greenfield and Lizhen Ji.
\newblock A new modular characterization of the hyperbolic plane.
\newblock {\em Preprint arXiv:1707.00818}, 2017.

\bibitem[GJT12]{gjt}
Yves Guivarc'h, Lizhen Ji, and John~C Taylor.
\newblock {\em Compactifications of symmetric spaces}, volume 156.
\newblock Springer, 2012.

\bibitem[GMP17]{kqc}
Frederick~W Gehring, Gaven~J Martin, and Bruce~P Palka.
\newblock {\em An introduction to the theory of higher-dimensional
  quasiconformal mappings}, volume 216.
\newblock American Mathematical Soc., 2017.

\bibitem[Gro81]{gromov}
Misha Gromov.
\newblock Hyperbolic manifolds, groups and actions.
\newblock In {\em Riemann surfaces and related topics: Proceedings of the 1978
  Stony Brook Conference (State Univ. New York, Stony Brook, NY, 1978)},
  volume~97, pages 183--213, 1981.

\bibitem[Hae15]{haettel}
Thomas Haettel.
\newblock Compactification de {Thurston} d’espaces de r{\'e}seaux marqu{\'e}s
  et de l’espace de {Torelli}.
\newblock {\em Groups Geom. Dyn.}, 9(2):331--368, 2015.

\bibitem[Hal15]{hall}
Brian Hall.
\newblock {\em Lie groups, Lie algebras, and representations: an elementary
  introduction}, volume 222.
\newblock Springer, 2015.

\bibitem[HSWW18]{hsw}
Thomas Haettel, Anna-Sofie Schilling, Cormac Walsh, and Anna Wienhard.
\newblock Horofunction compactifications of symmetric spaces.
\newblock {\em Preprint arXiv:1705.05026}, 2018.

\bibitem[Hub06]{hubbard}
John~H Hubbard.
\newblock {\em Teichm{\"u}ller theory and applications to geometry, topology,
  and dynamics, Volume I: Teichm{\"u}ller theory}.
\newblock 2006.

\bibitem[IT12]{imayoshi}
Yoichi Imayoshi and Masahiko Taniguchi.
\newblock {\em An introduction to Teichm{\"u}ller spaces}.
\newblock Springer, 2012.

\bibitem[Leh12]{lehto}
Olli Lehto.
\newblock {\em Univalent functions and Teichm{\"u}ller spaces}, volume 109.
\newblock Springer, 2012.

\bibitem[LW94]{hilbertmetric}
Carlangelo Liverani and Maciej Wojtkowski.
\newblock Generalization of the {Hilbert} metric to the space of positive
  definite matrices.
\newblock {\em Pacific Journal of Mathematics}, 166(2):339--355, 1994.

\bibitem[Pla95]{planche}
Pierre Planche.
\newblock {\em G{\'e}om{\'e}trie de {Finsler} sur les espaces sym{\'e}triques}.
\newblock PhD thesis, Atelier de reproduction de la Section de physique, 1995.

\bibitem[Roy71]{royden}
Halsey~L Royden.
\newblock Automorphisms and isometries of {Teichm{\"u}ller} space.
\newblock In {\em Advances in the theory of Riemann surfaces, Proceedings of
  the 1969 Stony Brook Conference, Stony Brook, NY}, pages 369--383. Princeton
  Univ. Press, 1971.

\bibitem[Sat60]{satake}
Ichiro Satake.
\newblock On representations and compactifications of symmetric {Riemannian}
  spaces.
\newblock {\em Annals of Mathematics}, pages 77--110, 1960.

\bibitem[Su16]{problems}
Weixu Su.
\newblock Problems on {Thurston} metric.
\newblock {\em Handbook of Teichm{\"u}ller Theory, Volume V}, pages 55--72,
  2016.

\bibitem[Tei39]{teich}
Oswald Teichmuller.
\newblock Extremale quasikonforme abbildungen und quadratische differentialen.
\newblock {\em Abh. Preuss. Akad. Wiss.}, 22:3--197, 1939.

\bibitem[Ter16]{terrasvol2}
Audrey Terras.
\newblock {\em Harmonic analysis on symmetric spaces: higher rank spaces,
  positive definite matrix space and generalizations}.
\newblock Springer, 2016.

\bibitem[Thu98]{thurston}
William~P Thurston.
\newblock Minimal stretch maps between hyperbolic surfaces.
\newblock {\em Preprint arxiv:9801039}, 1998.

\bibitem[Wal14]{walsh}
Cormac Walsh.
\newblock The horoboundary and isometry group of {Thurston`s} {Lipschitz}
  metric.
\newblock {\em Handbook of Teichm{\"u}ller Theory, Volume IV}, pages 327--353,
  2014.

\bibitem[Wei58]{weil}
Andr{\'e} Weil.
\newblock Modules des surfaces de {Riemann}.
\newblock {\em Seminare N. Bourbaki}, 168:413--419, 1958.

\bibitem[Yam14]{yamada}
Sumio Yamada.
\newblock Local and global aspects of {Weil-Petersson} geometry.
\newblock In {\em Handbook of Teichm\"uller theory. Vol. IV, 43--111, IRMA
  Letc. Math.} 2014.

\end{thebibliography}
\bibliographystyle{alpha}

\end{document}